\numberwithin{equation}{section}
\newcommand{\B}{\mathbb{B}}
\newcommand{\N}{\mathbb{N}}
\newcommand{\Q}{\mathbb{Q}}
\newcommand{\R}{\mathbb{R}}
\newcommand{\mm}{{\mbox{\boldmath$m$}}}
\newcommand{\ppi}{{\mbox{\boldmath$\pi$}}}
\newcommand{\sppi}{{\mbox{\scriptsize\boldmath$\pi$}}}
\newcommand{\sfd}{{\sf d}}
\newcommand{\sfC}{{\sf C}}
\newcommand{\sfL}{{\sf L}}
\newcommand{\sfT}{{\sf T}}
\newcommand{\Id}{{\rm Id}}                          
\newcommand{\Kliminf}{K\kern-3pt-\kern-2pt\mathop{\rm lim\,inf}\limits}  
\newcommand{\supp}{\mathop{\rm supp}\nolimits}   
\renewcommand{\d}{{\mathrm d}}
\newcommand{\restr}[1]{\lower3pt\hbox{$|_{#1}$}}
\newcommand{\la}{{\langle}}                  
\newcommand{\ra}{{\rangle}}
\newcommand{\eps}{\varepsilon}  
\newcommand{\nchi}{{\raise.3ex\hbox{$\chi$}}}
\newcommand{\fr}{\penalty-20\null\hfill$\blacksquare$}                      
\newcommand{\e}{{\rm{e}}}                           
\renewcommand{\mm}{\mathfrak m}                                
\newenvironment{proof}{\removelastskip\par\medskip   
\noindent{\textit{Proof.}}\rm}{\penalty-20\null\hfill$\square$\par\medbreak}
\newtheorem{theorem}{Theorem}[section]
\newtheorem{corollary}[theorem]{Corollary}
\newtheorem{lemma}[theorem]{Lemma}
\newtheorem{proposition}[theorem]{Proposition}
\newtheorem{definition}[theorem]{Definition}
\newtheorem{remark}[theorem]{Remark}
\newcommand{\X}{{\sf X}}
\newcommand{\Y}{{\sf Y}}
\newcommand{\RCD}{{\sf RCD}}
\newcommand{\beq}{\begin{equation}}
\newcommand{\eeq}{\end{equation}}
\newcommand{\norm}[1]{[\![#1]\!]}
\newcommand{\testvf}{{\rm TestVF}(\ppi)}
\newcommand{\testvfn}{{\rm TestVF}_\N(\ppi)}
\newcommand{\testvfc}{{\rm TestVF}_c(\ppi)}
\newcommand{\testv}{{\rm TestV}}
\newcommand{\vf}{{\rm VF}(\ppi)}
\title{On the notion of parallel transport on $\sf RCD$ spaces}
\author{Nicola Gigli\thanks{SISSA, ngigli@sissa.it} \and Enrico Pasqualetto\thanks{SISSA, epasqual@sissa.it}}
\begin{document}
\maketitle
\begin{abstract}
We propose a general notion of parallel transport on $\sf RCD$ spaces, prove an unconditioned uniqueness result and existence under suitable assumptions on the space. 

\bigskip

MSC2010: 30Lxx, 51Fxx 	

Keywords: Parallel transport, RCD spaces.

\end{abstract}
\tableofcontents
\section{Introduction}
More than ten years ago Sturm \cite{Sturm06I,Sturm06II} and Lott-Villani \cite{Lott-Villani09} introduced the concept of lower Ricci curvature bounds for metric measure spaces. Their approach has been refined in \cite{AmbrosioGigliSavare11-2} and \cite{Gigli12} with the introduction of the class of metric measure spaces with Riemannian Ricci curvature bounded from below, $\RCD$ spaces in short, which is currently a very active research area. We refer to the surveys \cite{Villani2016,Villani2017} for an overview of the topic and detailed references.

Among the various recent contributions, of particular relevance for the current manuscript is the paper \cite{Gigli14} by the first author, where a second order calculus has been built. In particular, on $\RCD(K,\infty)$ spaces the covariant derivative of vector fields is well defined. Let us mention that in \cite{Gigli14} `vector field' is intended in the sense of abstract $L^2$-normed $L^\infty$-modules and that in our previous paper \cite{GP16} we showed  that on $\RCD(K,N)$ spaces this abstract notion has a canonically more concrete counterpart described in terms of pointed-measured-Gromov-Hausdorff limits of rescaled spaces (this uses the rectifiability results obtained in \cite{Mondino-Naber14} and in \cite{MK16},\cite{GP16-2}).

\bigskip

In the classical smooth Riemannian framework, covariant derivative and parallel transport are two closely related concepts, thus given the existence of covariant derivative on $\RCD$ spaces it is natural to ask: is there a notion of parallel transport in the same setting? In this paper we address this question, our main results being:
\begin{itemize}
\item[i)] We provide a precise framework and give a rigorous meaning to the `PDE' defining the parallel transport (see Definitions \ref{def:w12}, \ref{def:h12} and \ref{def:pt}).
\item[ii)] By the nature of our definition, norm-preservation and linearity of the parallel transport are easy to derive, and these in turn will give uniqueness (see Corollary \ref{cor:uni}).
\item[iii)] On $\RCD$ spaces satisfying a certain regularity property, we are able to show existence of the parallel transport (see Section \ref{se:ex}). The regularity condition that we use is closely related to the existence of Sobolev vector fields with bounded covariant derivative (see Definition \ref{def:good_basis} for the precise assumption).
\end{itemize}
We believe that in fact the parallel transport exists on any $\RCD$ space, but we are currently unable to get the full proof. An insight on why this should not be too easy to prove is the following: on a space where the parallel transport exists, the dimension of the tangent module must be constant (see Theorem \ref{thm:constdim}) and thanks to the aforementioned paper \cite{GP16} this would in turn imply that the dimension of the pmGH-limits of rescaled spaces is constant. This very same result has been extremely elusive in the context of Ricci-limit spaces and has been obtained only relatively recently by Colding-Naber in \cite{ColdingNaber12}. Therefore it would be perhaps too optimistic to hope that the language proposed in \cite{Gigli14} and here allows for an easy generalization of such `constant dimension' result to the $\RCD$ setting. In this direction we remark that the assumptions that we put in order to obtain existence of the parallel transport are rather ad hoc and not really interesting from the geometric perspective: the intent with our existence result  is just to show that the approach we propose is non-void. In our forthcoming paper \cite{GP18} we shall study the same problem for more interesting geometric objects like Alexandrov spaces.

Let us also mention that in the appendix (see Theorem \ref{thm:sobbase}) we prove that 
\begin{itemize}
\item[iv)] Any $\RCD(K,\infty)$ space admits a base of the tangent module made of vectors in $W^{1,2}_C(T\X)$.
\end{itemize}
That is: if we relax the condition of `bounded covariant derivative' present in $(iii)$ above into `covariant derivative in $L^2$', then every $\RCD$ space meets the requirement.

\bigskip

Let us now briefly describe our approach. The crucial idea is that we don't study the problem of parallel transport along a single Lipschitz curve, but rather we study the problem along $\ppi$-a.e.\ curve, where $\ppi$ is a Borel probability measure on the space $\Gamma(\X)$  of continuous curves such that:
\[
\begin{split}
&\text{$\ppi$ is concentrated on equi-Lipschitz curves,}\\
&\text{for some $C>0$ we have $(\e_t)_*\ppi\leq C\mm$ for every $t\in[0,1]$,}
\end{split}
\]
where $\mm$ is the given reference measure on our $\RCD$ space $\X$ and $\e_t:\Gamma(\X)\to\X$ is the evaluation map defined by $\e_t(\gamma):=\gamma_t$. Measures $\ppi$ of this sort are a special case of so-called \emph{test plans} introduced in \cite{AmbrosioGigliSavare11}; these  can be used to define Sobolev functions on metric measure spaces by a kind of duality argument. The advantage of working with these plans rather than with single curves is that they are naturally linked to Sobolev calculus and thus also to all the functional-analytic machinery built in \cite{Gigli14}.

Let us now pretend, for the sake of this introduction, that our space $\X$ is in fact a smooth Riemannian manifold. In this case a time dependent vector field $(v_t)$ along $\ppi$ is, roughly said, given by a choice, for $\ppi$-a.e.\  $\gamma$,  of time dependent vector fields $(v^\gamma_t)$ on $\X$. Then we say that $(v_t)$ is a parallel transport along $\ppi$ provided for $\ppi$-a.e.\ $\gamma$ the vector field $t\mapsto v^\gamma_t(\gamma_t)$ is a parallel transport along $\gamma$. This happens if and only if
\[
\text{ for $\ppi$-a.e.\ $\gamma$ we have }\qquad\partial_t v^\gamma_t+\nabla_{\gamma'_t}v^\gamma_t=0\quad a.e.\ t.
\]
A relevant part of our paper (the whole Chapter \ref{ch:funct}) is devoted to showing that the above PDE can be stated even in the non-smooth setting, the key point being that it is possible to define a closed operator acting on $L^2$ vector fields along $\ppi$ which plays the role of $(\partial_t+\nabla_{\gamma'_t})$, see Definitions \ref{def:convect}, \ref{def:w12} and Proposition \ref{prop:consistency}.

\medskip

We conclude this introduction recalling that Petrunin proved in \cite{Petrunin98} that a certain notion of parallel transport exists along geodesic on Alexandrov spaces. Uniqueness for his construction is still an open problem. The question of comparing our notion and his one is certainly interesting, but outside the scope of this manuscript.

\bigskip

\noindent{\bf Acknowledgments} This research has been supported by the MIUR SIR-grant `Nonsmooth Differential Geometry' (RBSI147UG4).

\section{Some basic notions}
To keep the presentation short, we shall assume the reader familiar with the language proposed in \cite{Gigli14} (see also \cite{Gigli17}).
\subsection{Curves  in Banach spaces}
We recall here some basic results about measurability and integration of Banach-valued maps
of a single variable $t\in[0,1]$.  A detailed discussion about this topic can be found e.g.\ in \cite{DiestelUhl77}.

\bigskip

Let $\B$ be a fixed Banach space. We will denote by $\B'$ its dual space. A \emph{simple function}
is any map $y:\,[0,1]\to\B$ that can be written in the form
\[
y=\sum_{i=1}^k\nchi_{E_i}\,v_i,
\quad\mbox{ for some }E_1,\ldots,E_k\in\mathscr{B}\big([0,1]\big)\mbox{ and }v_1,\ldots,v_k\in\B,
\]
where for any topological space $\X$ we denote by $\mathscr{B}(\X)$ the set of Borel subsets of $\X$. A map $y:\,[0,1]\to\B$ is said to be \emph{strongly measurable} provided there exists a sequence
${(y_n)}_n$ of simple functions $y_n:\,[0,1]\to\B$ such that $\lim_n{\big\|y_n(t)-y(t)\big\|}_\B=0$
for $\mathcal{L}^1$-a.e.\ $t\in[0,1]$,
while it is said to be \emph{weakly measurable} provided $[0,1]\ni t\mapsto\omega\big(y(t)\big)\in\R$
is a Borel map for every $\omega\in\B'$.
It directly follows from the very definition that linear combinations of strongly (resp.\ weakly) measurable
functions are strongly (resp.\ weakly) measurable. Moreover, if a map $y:\,[0,1]\to\B$ is strongly measurable,
then the function $[0,1]\ni t\mapsto{\big\|y(t)\big\|}_\B\in\R$ is Borel.

The relation between the strongly measurable functions and the weakly measurable ones is fully described
by a theorem of Pettis, which states that a function $y:\,[0,1]\to\B$ is strongly measurable
if and only if it is weakly measurable and there exists a Borel set $N\subseteq[0,1]$ of null $\mathcal{L}^1$-measure
such that $y\big([0,1]\setminus N\big)$ is a separable subset of $\B$.

We now describe how to define $\B$-valued integrals, the so-called \emph{Bochner integrals}.
First of all, given a simple function $y:\,[0,1]\to\B$, written as $y=\sum_{i=1}^k\nchi_{E_i}\,v_i$, we define
\[
\int_0^1 y(t)\,\d t:=\sum_{i=1}^k\mathcal{L}^1(E_i)\,v_i\in\B.
\]
It can be readily checked that this definition does not depend on the particular way of
expressing the function $y$.
Further, we say that any strongly measurable function $y:\,[0,1]\to\B$ is \emph{Bochner integrable} provided
there exists a sequence ${(y_n)}_n$ of simple functions $y_n:\,[0,1]\to\B$
such that $\lim_n\int_0^1{\big\|y_n(t)-y(t)\big\|}_\B\,\d t=0$.
In particular, the sequence ${\big(\int_0^1 y_n(t)\,\d t\,\big)}_n\subseteq\B$ is Cauchy,
so that it makes sense to define
\[
\int_0^1 y(t)\,\d t:=\lim_{n\to\infty}\int_0^1 y_n(t)\,\d t\in\B.
\]
It turns out that the value of $\int_0^1 y(t)\,\d t$ just defined is independent of the approximating
simple functions ${(y_n)}_n$ and that it satisfies the fundamental inequality
\begin{equation}\label{eq:fund_inequality_Bochner}
{\left\|\int_0^1 y(t)\,\d t\,\right\|}_\B\leq\int_0^1{\big\|y(t)\big\|}_\B\,\d t.
\end{equation}

An alternative characterisation of the $\B$-valued integrable maps is given by the following
theorem, which is due to Bochner: a strongly measurable function $y:\,[0,1]\to\B$ is Bochner
integrable if and only if it satisfies $\int_0^1{\big\|y(t)\big\|}_\B\,\d t<+\infty$.

The previous result naturally leads to the notion of $\B$-valued $L^p$ space:
given $p\in[1,\infty]$, we define $L^p\big([0,1],\B\big)$ as the space of all (equivalence classes of) those strongly
measurable maps $y:\,[0,1]\to\B$ for which the quantity ${\|y\|}_{L^p([0,1],\B)}$ is finite, where
\[
{\|y\|}_{L^p([0,1],\B)}:=\left\{\begin{array}{ll}
\displaystyle{\left(\int_0^1{\big\|y(t)\big\|}^p_\B\,\d t\right)^{1/p}}&\quad\mbox{ if }p<\infty,\\
\displaystyle{\underset{t\in[0,1]}{{\rm ess}\,\sup}\,{\big\|y(t)\big\|}_\B}&\quad\mbox{ if }p=\infty.
\end{array}\right.
\]
Hence $L^p\big([0,1],\B\big)$ itself is a Banach space, for any $p\in[1,\infty]$. 
\begin{definition}[Vector-valued Sobolev/absolutely continuous maps] Let $p\in[1,\infty]$. The space $W^{1,p}([0,1],\B)$ consists of those curves $y\in L^p([0,1],\B)$ for which there is $y'\in L^p([0,1],\B)$ such that
\[
\int_0^1\varphi'(t)y(t)\,\d t=-\int_0^1\varphi(t)y'(t)\,\d t\qquad\forall \varphi\in C^\infty_c(0,1).
\]
It is equipped with the norm
\[
\|y\|_{W^{1,p}([0,1],\B)}:=\Big(\|y\|^p_{L^{p}([0,1],\B)}+\|y'\|^p_{L^{p}([0,1],\B)}\Big)^{1/p}.
\]
The space $AC^p([0,1],\B)$ consists of those curves $y:[0,1]\to \B$ for which there is $f\in L^p(0,1)$ such that
\[
\|y(s)-y(t)\|_\B\leq\int_t^sf(r)\,\d r,\qquad\forall t,s\in[0,1],\ t\leq s.
\]
\end{definition}

\begin{proposition}[Absolutely continuous representative]
Let $y\in W^{1,p}([0,1],\B)$. Then there is $\tilde y\in AC^p([0,1],\B)$ such that $y(t)=\tilde y(t)$ for a.e.\ $t$. Moreover, such $\tilde y$ satisfies
\[
\tilde y(s)-\tilde y(t)=\int_t^sy'(r)\,\d r\qquad\forall t,s\in[0,1],\ t<s.
\]
\end{proposition}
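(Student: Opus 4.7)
The plan is to construct the representative by Bochner-integrating the weak derivative and then to show, via a duality argument, that the result differs from $y$ by an a.e.\ constant.

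First, fix an arbitrary $c\in\B$ and set
\[
\tilde y(t):=c+\int_0^t y'(r)\,\d r,\qquad t\in[0,1].
\]
This is well defined because $y'\in L^p([0,1],\B)\subseteq L^1([0,1],\B)$, and Bochner's characterisation of integrability applies. From the fundamental inequality \eqref{eq:fund_inequality_Bochner} for the Bochner integral, for every $0\leq t\leq s\leq 1$ one gets
\[
\|\tilde y(s)-\tilde y(t)\|_\B\leq\int_t^s\|y'(r)\|_\B\,\d r,
\]
so $\tilde y\in AC^p([0,1],\B)$ with density $f(r):=\|y'(r)\|_\B\in L^p(0,1)$, and in particular the conclusion $\tilde y(s)-\tilde y(t)=\int_t^s y'(r)\,\d r$ holds by construction.

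Next, I would check that $\tilde y$ admits $y'$ as distributional derivative. For any $\varphi\in C^\infty_c(0,1)$, Fubini for the Bochner integral (applied to the product $(t,r)\mapsto \varphi'(t)\nchi_{\{r\leq t\}}\,y'(r)$, which is Bochner-integrable on $[0,1]^2$) yields
\[
\int_0^1\varphi'(t)\,\tilde y(t)\,\d t=\int_0^1\left(\int_r^1\varphi'(t)\,\d t\right)y'(r)\,\d r=-\int_0^1\varphi(r)\,y'(r)\,\d r.
\]
Combining this with the defining property of $y$, the difference $z:=y-\tilde y\in L^p([0,1],\B)$ satisfies $\int_0^1\varphi'(t)\,z(t)\,\d t=0$ in $\B$ for every $\varphi\in C^\infty_c(0,1)$.

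The heart of the argument is to deduce that $z$ is a.e.\ constant, after which one finally adjusts $c\in\B$ so that this constant is $0$ and the proposition follows. For any $\omega\in\B'$, the scalar function $\omega\circ z\in L^p(0,1)$ inherits zero distributional derivative, hence by the classical one-dimensional result it agrees a.e.\ with a constant $c_\omega\in\R$. The delicate point — and the step I expect to be the main obstacle — is that the a.e.\ set may a priori depend on $\omega$, so one cannot directly invoke this for every $\omega$ simultaneously. Here Pettis's theorem intervenes: since $z$ is strongly measurable, there is a separable closed subspace $\B_0\subseteq\B$ and a negligible set $N\subseteq[0,1]$ such that $z(t)\in\B_0$ for all $t\notin N$. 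Choose then a countable family $\{\omega_n\}\subseteq\B'$ whose restrictions to $\B_0$ separate the points of $\B_0$ (this exists by the Hahn-Banach theorem and separability of $\B_0$). Applying the scalar statement to each $\omega_n\circ z$ gives a common full-measure set $F\subseteq[0,1]\setminus N$ on which $\omega_n(z(t))=c_{\omega_n}$ for every $n$. Since $\{\omega_n\}$ separates points on $\B_0$, the value $z(t)$ is forced to be the same element of $\B_0$ for every $t\in F$, which is the desired a.e.\ constancy of $z$. Replacing $c$ by $c+z(t_0)$ for any $t_0\in F$ then yields $\tilde y=y$ a.e., concluding the proof.
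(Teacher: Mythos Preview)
Your proof is correct and follows the same strategy as the paper's: build the candidate by Bochner-integrating $y'$, then show the difference has zero weak derivative and is therefore a.e.\ constant by testing against functionals in $\B'$. You supply more detail than the paper does---in particular, you handle via Pettis's theorem and a countable separating family the dependence of the exceptional set on $\omega$, a point the paper's brief argument leaves implicit.
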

\begin{proof} The curve $t\mapsto z(t):=\int_0^t y'(s)\,\d s$ belongs to $AC^p\cap W^{1,p}([0,1],\B)$ and thus in particular the curve $t\mapsto y(t)-z(t)$ belongs to $W^{1,p}([0,1],\B)$ and, obviously, has derivative a.e.\ equal to 0. Hence to conclude it is sufficient to show that any such curve is a.e.\ constant. This follows noticing that for any $\ell\in \B'$ the map $t\mapsto\ell(y(t)-z(t))$ is in $W^{1,p}([0,1])$ (by direct verification) and has derivative a.e.\ equal to 0.
\end{proof}

\begin{proposition}[Characterization of curves in $W^{1,p}({[}0,1{]},\B)$]\label{prop:carw1p}
Let $y,z\in L^p([0,1],\B)$. Then $y\in W^{1,p}([0,1],\B)$ and $z=y'$ if and only if for some dense set $D\subset \B'$ we have that $\ell\circ y\in W^{1,1}([0,1])$ with $(\ell\circ y)'=\ell\circ z$ a.e.\ for every $\ell\in D$.

In particular, if $\B=L^{\tilde p}(\mu)$ for some Radon measure $\mu$, then $y\in W^{1,p}([0,1],L^{\tilde p}(\mu))$ and $z=y'$ if and only if for every Borel set $E$ we have that $t\mapsto\int_Ey(t)\,\d\mu$ is in $W^{1,1}(0,1)$ with derivative given by $t\mapsto\int_Ez(t)\,\d\mu$.
\end{proposition}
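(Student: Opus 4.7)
The plan is to reduce the vector-valued Sobolev identity to a family of scalar ones via pairing with continuous linear functionals, and then recover the vector identity from the scalar ones using Hahn--Banach.

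First, I would establish the forward implication directly. Assuming $y\in W^{1,p}([0,1],\B)$ with $y'=z$, any $\ell\in\B'$ commutes with the Bochner integral, so applying $\ell$ to the defining identity of $W^{1,p}([0,1],\B)$ yields
\[
\int_0^1 \varphi'(t)(\ell\circ y)(t)\,\d t = -\int_0^1 \varphi(t)(\ell\circ z)(t)\,\d t \qquad \forall\varphi\in C^\infty_c(0,1).
\]
Since $\ell\circ y,\ell\circ z\in L^p(0,1)\subset L^1(0,1)$, this is precisely the statement that $\ell\circ y\in W^{1,1}([0,1])$ with $(\ell\circ y)'=\ell\circ z$; the conclusion holds for every $\ell\in\B'$ and in particular on any prescribed $D\subset\B'$.

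For the backward implication I would fix $\varphi\in C^\infty_c(0,1)$ and consider
\[
A_\varphi := \int_0^1 \varphi'(t)\,y(t)\,\d t + \int_0^1 \varphi(t)\,z(t)\,\d t \in \B,
\]
which is well defined by \eqref{eq:fund_inequality_Bochner} together with $y,z\in L^p([0,1],\B)$ and the boundedness of $\varphi,\varphi'$. Commuting any $\ell\in D$ into these Bochner integrals and using the hypothesis $(\ell\circ y)'=\ell\circ z$ shows $\ell(A_\varphi)=0$. By linearity this extends to the span of $D$, and by continuity of the evaluation map $\ell\mapsto\ell(A_\varphi)$ on $\B'$ to its norm closure; density of $D$ then gives $\ell(A_\varphi)=0$ for every $\ell\in\B'$, so $A_\varphi=0$ by Hahn--Banach. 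This is exactly the defining identity of $W^{1,p}([0,1],\B)$ with $y'=z$.

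The second assertion should follow by specializing to $\B=L^{\tilde p}(\mu)$: the functionals $f\mapsto\int_E f\,\d\mu$ correspond to indicators $\chi_E$ in $L^{\tilde p'}(\mu)=\B'$, and their linear combinations are exactly the simple functions, which are norm-dense in $L^{\tilde p'}(\mu)$ (at least when $\tilde p'<\infty$), so one may take $D$ to be their span. There is no substantial obstacle here; the only step requiring a moment of care is the passage from a dense subset of $\B'$ to all of $\B'$ before invoking Hahn--Banach, which amounts to noting that $\ell\mapsto\ell(A_\varphi)$ is norm-continuous on $\B'$. Everything else is a formal manipulation based on commuting bounded linear functionals with the Bochner integral.
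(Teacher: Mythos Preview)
Your proof is correct and takes essentially the same approach as the paper: commute continuous linear functionals with the Bochner integral to reduce to scalar identities, then use density of $D$ in $\B'$ (together with Hahn--Banach, which you make explicit) to recover the vector identity. The paper's proof is terser---it only spells out the backward direction and the density remark for simple functions---but the argument is identical in substance.
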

\begin{proof}
By assumption and using the fact that the Bochner integral commutes with the application of $\ell$ we have that
\[
\ell\Big(\int_0^1\varphi'(t)y(t)\,\d t\Big)=\ell\Big(-\int_0^1\varphi(t)z(t)\,\d t\Big)\qquad\forall \varphi\in C^\infty_c(0,1),
\]
for every $\ell\in D$. The conclusion follows by the density of $D$ in $\B'$.

For the second claim just observe that the linear span of the set of characteristic functions of Borel sets is dense in $L^{\tilde q}(\mu)\sim (L^{\tilde p}(\mu))'$.
\end{proof}
It is important to underline that in general absolute continuity does not imply a.e.\ differentiability: this has to do with the so-called Radon-Nikodym property of the target Banach space. A sufficient condition for this implication to hold is given in the next theorem:
\begin{theorem}\label{thm:acw}
Let $\B$ be a reflexive Banach space, $p\in[1,\infty]$ and $y\in AC^p([0,1],\B)$. Then for a.e.\ $t\in[0,1]$ the limit of $\frac{y(t+h)-y(t)}{h}$ as $h\to 0$ exists in $\B$. 

In particular,  $AC^p([0,1],\B)\sim W^{1,p}([0,1],\B)$, i.e.\ every absolutely continuous curve is the (only) continuous representative of a curve in $W^{1,p}([0,1],\B)$.
\end{theorem}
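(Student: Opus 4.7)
The plan is to leverage the Radon-Nikodym property (RNP) enjoyed by reflexive Banach spaces, combined with a scalarization argument and a vector-valued Lebesgue differentiation theorem.

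First, I would reduce to a separable setting. The $AC^p$ bound forces $y$ to be continuous, so $y([0,1])$ is separable; replacing $\B$ by the closed linear span of $y([0,1])$, which is still reflexive as a closed subspace of a reflexive space, I may assume $\B$ itself is separable. Then I would select a countable norming family $\{\ell_n\}_{n\in\N}\subset\B'$.

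Next comes the scalarization. Each composition $\ell_n\circ y$ is scalar absolutely continuous with $|(\ell_n\circ y)(s)-(\ell_n\circ y)(t)|\le \|\ell_n\|_{\B'}\int_t^s f(r)\,\d r$, hence differentiable almost everywhere and satisfying $|(\ell_n\circ y)'(t)|\le\|\ell_n\|_{\B'}f(t)$. Let $N\subset[0,1]$ be the union over $n$ of the exceptional null sets together with the non-Lebesgue points of $f$. For $t\notin N$, the difference quotients $q_h:=\frac{y(t+h)-y(t)}{h}$ satisfy $\|q_h\|_\B\le\frac{1}{h}\int_t^{t+h}f(r)\,\d r$, which stays uniformly bounded as $h\to 0$. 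Reflexivity (via the sequential Banach-Alaoglu theorem applied in the separable closed subspace) ensures every sequence $h_k\to 0$ has a weak subsequential limit $v\in\B$; testing against $\ell_n$ forces $\ell_n(v)=(\ell_n\circ y)'(t)$, so the separating family pins $v$ uniquely. Thus $q_h$ converges weakly as $h\to 0$ to a well-defined element $y'(t)\in\B$ with $\|y'(t)\|_\B\le f(t)$.

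To promote weak to strong convergence I would invoke RNP. The $\B$-valued set function $\mu\big((a,b)\big):=y(b)-y(a)$ extends canonically to a countably additive Borel vector measure on $[0,1]$ whose total variation is controlled by $|\mu|(E)\le\int_E f\,\d\mathcal{L}^1$; in particular $\mu$ has bounded variation and $\mu\ll\mathcal{L}^1$. Since reflexive spaces enjoy RNP, there exists $g\in L^1([0,1],\B)$ with $\mu(E)=\int_E g(r)\,\d r$ for every Borel $E$, and in particular $y(s)-y(t)=\int_t^s g(r)\,\d r$. The Bochner-space Lebesgue differentiation theorem (available thanks to separability of the essential range) then yields $q_h\to g(t)$ strongly at almost every $t$, and comparison with the weak limit forces $g(t)=y'(t)$ a.e. The pointwise bound $\|g(t)\|_\B\le f(t)$ shows $g\in L^p$, and Proposition \ref{prop:carw1p} together with the integral representation identifies $y$ with the continuous absolutely continuous representative of an element of $W^{1,p}([0,1],\B)$.

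The main obstacle is the invocation of RNP for reflexive spaces: this is the genuine deep input (classical in vector measure theory, see Diestel-Uhl). The remaining technicalities, extending $\mu$ from the semiring of intervals to Borel sets, verifying the variation bound, and identifying Lebesgue points, are routine but need some care to align the a.e. strong derivative with the weak derivative identified earlier.
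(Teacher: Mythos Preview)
The paper does not supply a proof of this theorem: it is stated as a classical fact, with the sentence immediately preceding it pointing to the Radon--Nikodym property and the earlier reference \cite{DiestelUhl77} serving as the implicit source. Your proposal is correct and is essentially the standard argument one finds there: reflexive spaces enjoy RNP, so the increment measure $\mu\big((a,b)\big)=y(b)-y(a)$ admits a Bochner density $g$, and the vector-valued Lebesgue differentiation theorem then yields the a.e.\ strong derivative.

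One minor remark: your scalarization step establishing the \emph{weak} a.e.\ derivative via a norming sequence is logically redundant once you invoke RNP and Lebesgue differentiation, since the latter already delivers the strong limit directly and the bound $\|g(t)\|_\B\le f(t)$ follows from the variation estimate. It does no harm, but you could streamline the argument by going straight from the integral representation $y(s)-y(t)=\int_t^s g\,\d r$ to the a.e.\ strong differentiability.
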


Given any Banach space $\B$, we shall denote by ${\rm End}(\B)$ the space of all linear and
continuous maps of $\B$ to itself, which is a Banach space if endowed with
the operator norm.

The space $\Gamma(\B):=C\big([0,1],\B\big)$ is a Banach space with respect to the
norm ${\|\cdot\|}_{\Gamma(\B)}$, given by
${\|y\|}_{\Gamma(\B)}:=\max\big\{{\|y_t\|}_\B\,:\,t\in[0,1]\big\}$ for every $y\in\Gamma(\B)$.

\begin{theorem}[Integral solutions to vector-valued linear ODEs]\label{thm:linear_ODE_int}
Let $\B$ be a Banach space. Let $z\in\Gamma(\B)$.
Let $\lambda:\,[0,1]\to{\rm End}(\B)$ be a bounded function, i.e.\ there exists $c>0$
such that $\big\|\lambda(t)\big\|_{{\rm End}(\B)}\leq c$ for every $t\in[0,1]$.
Assume that $[0,1]\ni t\mapsto\lambda(t)v\in\B$ is strongly measurable for every $v\in\B$.
Then there exists a unique curve $y\in\Gamma(\B)$ such that
\begin{equation}\label{eq:linear_ODE_int}
y(t)=z(t)+\int_0^t\lambda(s)y(s)\,\d s\quad\mbox{ for every }t\in[0,1].
\end{equation}
Moreover, the solution $y$ satisfies ${\|y\|}_{\Gamma(\B)}\leq e^c\,{\|z\|}_{\Gamma(\B)}$.
\end{theorem}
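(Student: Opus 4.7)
The plan is to solve \eqref{eq:linear_ODE_int} by Picard iteration in the Banach space $\Gamma(\B)$. I would introduce the integral operator $T\colon\Gamma(\B)\to\Gamma(\B)$ defined by $(Ty)(t):=z(t)+\int_0^t\lambda(s)y(s)\,\d s$ and build a fixed point of $T$ as the uniform limit of the iterates $y_{n+1}:=Ty_n$ starting from $y_0:=0$.

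The first thing to verify is that $T$ is well-defined, i.e.\ that for any $y\in\Gamma(\B)$ the $\B$-valued map $s\mapsto\lambda(s)y(s)$ is Bochner integrable and that $t\mapsto(Ty)(t)$ is then continuous. Since $y$ is continuous, $y([0,1])$ is a separable compact subset of $\B$ and $y$ can be uniformly approximated by simple functions $y_k=\sum_i\nchi_{E_i^k}v_i^k$. By the hypothesis on $\lambda$ and linearity, $s\mapsto\lambda(s)y_k(s)=\sum_i\nchi_{E_i^k}(s)\lambda(s)v_i^k$ is strongly measurable; then $\|\lambda(s)y(s)-\lambda(s)y_k(s)\|_\B\leq c\,\|y(s)-y_k(s)\|_\B\to 0$ uniformly in $s$ implies strong measurability of $s\mapsto\lambda(s)y(s)$. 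Since this map is also bounded by $c\,\|y\|_{\Gamma(\B)}$, Bochner's theorem yields integrability, and continuity of $(Ty)(\cdot)$ then follows from \eqref{eq:fund_inequality_Bochner} via $\|(Ty)(t)-(Ty)(s)\|_\B\leq\|z(t)-z(s)\|_\B+c\,\|y\|_{\Gamma(\B)}|t-s|$.

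Next I would iterate. With $y_0:=0$ and $y_{n+1}:=Ty_n$, an induction using \eqref{eq:fund_inequality_Bochner} and $\|\lambda(s)\|_{{\rm End}(\B)}\leq c$ yields
\[
\|y_{n+1}(t)-y_n(t)\|_\B\leq\frac{(ct)^n}{n!}\,\|z\|_{\Gamma(\B)}\qquad\forall t\in[0,1],\ n\in\N.
\]
Telescoping shows that $(y_n)$ is Cauchy in $\Gamma(\B)$; its uniform limit $y\in\Gamma(\B)$ satisfies $y=Ty$ (pass to the limit inside the Bochner integral, again via \eqref{eq:fund_inequality_Bochner}), hence \eqref{eq:linear_ODE_int}. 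Summing the same bound produces the stated estimate $\|y\|_{\Gamma(\B)}\leq\sum_{n\geq 0}\frac{c^n}{n!}\,\|z\|_{\Gamma(\B)}=e^c\,\|z\|_{\Gamma(\B)}$. For uniqueness, if $y_1,y_2$ are two solutions then $u:=y_1-y_2$ satisfies $u(t)=\int_0^t\lambda(s)u(s)\,\d s$, so $\|u(t)\|_\B\leq c\int_0^t\|u(s)\|_\B\,\d s$ and Gronwall's inequality forces $u\equiv 0$.

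The only delicate point I anticipate is the measurability step outlined above: one has to combine the strong measurability of $\lambda(\cdot)v$ for each fixed $v\in\B$ with the separability of the range of $y$ to deduce strong measurability of the composition $s\mapsto\lambda(s)y(s)$. Everything else is a routine adaptation of the classical scalar Picard iteration, with the constants arranged to produce precisely the factor $e^c$.
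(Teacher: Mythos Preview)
Your proof is correct and is essentially the same as the paper's: both establish strong measurability of $s\mapsto\lambda(s)y(s)$ via simple-function approximation, and your Picard iterates $y_{n+1}=Ty_n$ are exactly the partial sums $\sum_{k=0}^{n}\Lambda^k z$ of the Neumann series the paper uses to invert $\Id_{\Gamma(\B)}-\Lambda$, with the identical estimate $\|\Lambda^n\|\leq c^n/n!$ appearing in both arguments. The only cosmetic difference is that you phrase uniqueness via Gronwall whereas the paper reads it off from the invertibility of $\Id_{\Gamma(\B)}-\Lambda$.
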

\begin{proof}
Given any simple function $t\mapsto y_t=\sum_{i=1}^k\nchi_{A_i}(t)\,v_i$, with
$A_1,\ldots,A_k\in\mathscr{B}\big([0,1]\big)$ and $v_1,\ldots,v_k\in\B$,
we have that $t\mapsto\lambda(t)y_t=\sum_{i=1}^k\nchi_{A_i}(t)\,\lambda(t)v_i$ is strongly measurable
by hypothesis on $\lambda$. Now fix $y\in\Gamma(\B)$. In particular, $y:\,[0,1]\to\B$
is strongly measurable, hence there exists a sequence ${(y^k)}_k$ of simple functions
$y^k:\,[0,1]\to\B$ such that $\lim_k{\|y^k_t-y_t\|}_\B=0$ holds for $\mathcal{L}^1$-a.e.\ $t\in[0,1]$.
This grants that ${\big\|\lambda(t)y^k_t-\lambda(t)y_t\big\|}_\B\leq c\,{\|y^k_t-y_t\|}_\B
\overset{k}{\to}0$ is satisfied for $\mathcal{L}^1$-a.e.\ $t\in[0,1]$, thus accordingly the map
$t\mapsto\lambda(t)y_t$ is strongly measurable as pointwise limit of strongly measurable functions.
Moreover, since ${\big\|\lambda(t)y_t\big\|}_\B\leq c\,{\|y\|}_{\Gamma(\B)}$ for all $t\in[0,1]$,
one has that $t\mapsto\lambda(t)y_t$ actually belongs to $L^\infty\big([0,1],\B\big)$.
Therefore it makes sense to define the function $\Lambda y:\,[0,1]\to\B$ as
$(\Lambda y)(t):=\int_0^t\lambda(s)y_s\,\d s$ for every $t\in[0,1]$.
Note that
\begin{equation}\label{eq:linear_ODE_int_1}
{\big\|\Lambda y(t_1)-\Lambda y(t_0)\big\|}_\B
\leq c\,{\|y\|}_{\Gamma(\B)}(t_1-t_0)\quad\mbox{ for every }t_0,t_1\in [0,1]\mbox{ with }t_0<t_1.
\end{equation}
Then $\Lambda y$ is Lipschitz with ${\rm Lip}(\Lambda y)\leq c\,{\|y\|}_{\Gamma(\B)}$,
so in particular $\Lambda y\in\Gamma(\B)$.
By plugging $t_1=t$ and $t_0=0$ into \eqref{eq:linear_ODE_int_1}, we deduce that
${\big\|\Lambda y(t)\big\|}_\B\leq c\,{\|y\|}_{\Gamma(\B)}t$ for all $t\in[0,1]$ and accordingly
that ${\|\Lambda y\|}_{\Gamma(\B)}\leq c\,{\|y\|}_{\Gamma(\B)}$.
This guarantees that the mapping $\Lambda:\,\Gamma(\B)\to\Gamma(\B)$ is linear and continuous,
with ${\|\Lambda\|}_{{\rm End}(\Gamma(\B))}\leq c$.
Now observe that
\begin{equation}\label{eq:linear_ODE_int_2}
y\in\Gamma(\B)\mbox{ satisfies \eqref{eq:linear_ODE_int}}
\quad\Longleftrightarrow\quad
({\Id}_{\Gamma(\B)}-\Lambda)(y)=z.
\end{equation}
For any $n\in\N^+$, the iterated operator $\Lambda^n=\Lambda\circ\ldots\circ\Lambda$ satisfies
\[\begin{split}
{\big\|\Lambda^n y(t)\big\|}_{\B}&\leq
c\int_0^t{\big\|\Lambda^{n-1}y(t_n)\big\|}_{\B}\,\d t_n\\
&\leq c^2\int_0^t\!\!\int_0^{t_n}{\big\|\Lambda^{n-2}y(t_{n-1})\big\|}_{\B}
\,\d t_{n-1}\,\d t_n\\
&\leq\ldots\ldots\\
&\leq c^n\int_0^t\!\!\int_0^{t_n}\!\!\ldots\!\int_0^{t_2}{\big\|y(t_1)\big\|}_{\B}
\,\d t_1\ldots\d t_{n-1}\,\d t_n\\
&\leq c^n\,{\|y\|}_{\Gamma(\B)}\int_0^t\!\!\int_0^{t_n}\!\!\ldots\!\int_0^{t_2}
\,\d t_1\ldots\d t_{n-1}\,\d t_n\\
&=c^n\,{\|y\|}_{\Gamma(\B)}\,\frac{t^n}{n!}
\end{split}\]
for every $y\in\Gamma(\B)$ and $t\in[0,1]$, whence
${\|\Lambda^n\|}_{{\rm End}(\Gamma(\B))}\leq c^n/n!$.
Hence ${\rm Id}_{\Gamma(\mathbb{B})}-\Lambda$ is invertible and the operator norm of its inverse
$(\Id_{\Gamma(\B)}-\Lambda)^{-1}=\sum_{n=0}^\infty\Lambda^n$ is bounded above by $e^c$.
In light of \eqref{eq:linear_ODE_int_2}, we finally conclude that there exists a unique curve
$y\in\Gamma(\B)$ fulfilling \eqref{eq:linear_ODE_int},
namely $y:=(\Id_{\Gamma(\B)}-\Lambda)^{-1}(z)$, which also satisfies
${\|y\|}_{\Gamma(\B)}\leq e^c\,{\|z\|}_{\Gamma(\B)}$.
\end{proof}
We will actually make use of is the following consequence of Theorem \ref{thm:linear_ODE_int}.
\begin{corollary}[Differential solutions to vector-valued linear ODEs]\label{cor:linear_ODE_diff}
Fix a reflexive Banach space $\B$. Let $\overline{y}\in\B$.
Let $\lambda:\,[0,1]\to{\rm End}(\B)$ be a bounded function.
Suppose that the map $[0,1]\ni t\mapsto\lambda(t)v\in\B$ is strongly measurable for every $v\in\B$.
Then there exists a unique curve $y\in{\rm LIP}\big([0,1],\mathbb{B}\big)$ such that
\begin{equation}\label{eq:linear_ODE_diff}
\left\{\begin{array}{ll}
y'(t)=\lambda(t)y(t)\quad\mbox{ for }\mathcal{L}^1\mbox{-a.e.\ }t\in[0,1],\\
y(0)=\bar{y}.
\end{array}\right.
\end{equation}
Moreover, the solution $y$ satisfies
${\|y\|}_{\Gamma(\B)}\leq e^c\,{\|\overline{y}\|}_\B$, where $c:=\max_{t\in[0,1]}\big\|\lambda(t)\big\|_{{\rm End}(\B)}$.
\end{corollary}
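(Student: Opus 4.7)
The plan is to reduce the statement to the integral version, Theorem \ref{thm:linear_ODE_int}, by choosing as forcing term the constant curve $z(t)\equiv\bar y$. That theorem then produces a unique $y\in\Gamma(\B)$ satisfying
\[
y(t)=\bar y+\int_0^t\lambda(s)y(s)\,\d s\qquad\text{for every }t\in[0,1],
\]
together with the bound $\|y\|_{\Gamma(\B)}\le e^c\|\bar y\|_\B$. This candidate is independent of the reflexivity hypothesis; reflexivity will enter only to upgrade the integral equation to a pointwise differential one.

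Next I would verify that $y$ is Lipschitz and extract the ODE. From the proof of Theorem \ref{thm:linear_ODE_int} the map $s\mapsto \lambda(s)y(s)$ is strongly measurable and uniformly bounded by $c\,\|y\|_{\Gamma(\B)}$, so \eqref{eq:fund_inequality_Bochner} gives
\[
\|y(t)-y(s)\|_\B\le c\,\|y\|_{\Gamma(\B)}\,|t-s|,
\]
whence $y\in AC^\infty([0,1],\B)$. Since $\B$ is reflexive, Theorem \ref{thm:acw} yields that $y$ is the continuous representative of some element of $W^{1,\infty}([0,1],\B)$ and admits an a.e. strong derivative $y'$. Applying the Absolutely Continuous Representative Proposition to $y$ and comparing with the integral equation, we deduce
\[
\int_t^s y'(r)\,\d r=\int_t^s\lambda(r)y(r)\,\d r\qquad\text{for all }0\le t\le s\le 1.
\]
To identify the integrands a.e., I would pair both sides with any $\ell\in\B'$ from a countable dense subset, obtaining real-valued indefinite integrals whose derivatives coincide almost everywhere by the scalar Lebesgue differentiation theorem; since $y'$ and $\lambda(\cdot)y(\cdot)$ are strongly measurable with (essentially) separable range, density in $\B'$ forces $y'(r)=\lambda(r)y(r)$ for a.e.\ $r$, while $y(0)=\bar y$ is built into the integral equation.

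For uniqueness, I would argue in the reverse direction: any $y\in{\rm LIP}([0,1],\B)$ solving \eqref{eq:linear_ODE_diff} lies automatically in $W^{1,\infty}([0,1],\B)$, so the Absolutely Continuous Representative Proposition gives $y(t)=\bar y+\int_0^t y'(r)\,\d r=\bar y+\int_0^t\lambda(r)y(r)\,\d r$, i.e.\ $y$ solves \eqref{eq:linear_ODE_int} with $z\equiv\bar y$, and the uniqueness clause of Theorem \ref{thm:linear_ODE_int} closes the argument. The norm bound $\|y\|_{\Gamma(\B)}\le e^c\|\bar y\|_\B$ is then just the one already provided by Theorem \ref{thm:linear_ODE_int}.

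The only nontrivial point I foresee is the passage from the integral identity to the pointwise equality $y'=\lambda(\cdot)y(\cdot)$: this is where the reflexivity of $\B$ is essential, since without it indefinite Bochner integrals need not be a.e.\ differentiable, and in the argument above reflexivity is used precisely through Theorem \ref{thm:acw} to guarantee the strong a.e.\ existence of $y'$ before reducing to scalar Lebesgue differentiation via Proposition \ref{prop:carw1p}.
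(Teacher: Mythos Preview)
Your proof is correct and follows essentially the same route as the paper: apply Theorem \ref{thm:linear_ODE_int} with the constant forcing $z\equiv\bar y$, deduce Lipschitz regularity from the integral representation, invoke reflexivity via Theorem \ref{thm:acw} to get a.e.\ differentiability, and recover uniqueness by integrating back to \eqref{eq:linear_ODE_int}. The only difference is that where the paper says \eqref{eq:linear_ODE_diff} ``trivially follows'' from \eqref{eq:linear_ODE_int}, you spell out the identification $y'=\lambda(\cdot)y(\cdot)$ via scalar Lebesgue differentiation against a dense set in $\B'$; this is a legitimate (and slightly more careful) way to justify that step.
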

\begin{proof}
Define $z(t):=\overline{y}$ for every $t\in[0,1]$ and consider the curve $y\in\Gamma(\B)$, whose existence
is granted by Theorem \ref{thm:linear_ODE_int}. For every $t,s\in[0,1]$, with $s<t$, we have that
$${\big\|y(t)-y(s)\big\|}_\B={\left\|\int_s^t\lambda(r)y(r)\,\d r\right\|}_\B
\overset{\eqref{eq:fund_inequality_Bochner}}\leq\int_s^t{\big\|\lambda(r)y(r)\big\|}_\B\,\d r\leq
c\int_s^t{\big\|y(r)\big\|}_\B\,\d r.$$
Since ${\big\|y(\cdot)\big\|}_\B\in L^\infty(0,1)$, we deduce that the function $y$ is Lipschitz,
so that by Theorem \ref{thm:acw} $y$ is a.e.\ differentiable. Then  \eqref{eq:linear_ODE_diff} trivially follows from \eqref{eq:linear_ODE_int}.

Conversely, let $y\in{\rm LIP}\big([0,1],\B\big)$ be any curve such that \eqref{eq:linear_ODE_diff} holds true.
By integration we conclude that $y$
satisfies also property \eqref{eq:linear_ODE_int}, proving uniqueness.
\end{proof}
\subsection{Pullback of an \texorpdfstring{$L^0$}{L0}-normed module}\label{subsect:pullback_L0}
The aim of this subsection is to introduce the concept of pullback of an $L^0$-normed module
and to study its main properties. The following definitions and results mimic the analogous ones for
$L^p$-normed modules, which are treated in \cite[Subsection 1.6]{Gigli14}; a digression similar to the one below has been done in \cite{GR17}.
\begin{theorem}\label{thm:pullback_L0}
Let $({\rm X},\mathcal{A}_{\rm X},\mm_{\rm X})$, $({\rm Y},\mathcal{A}_{\rm Y},\mm_{\rm Y})$ be
$\sigma$-finite measured spaces. Let $\varphi:\,{\rm X}\to{\rm Y}$ be a map of bounded compression (i.e.\ $\varphi_*\mm_{\rm X}\leq C\mm_{\rm Y}$ for some $C>0$). Let
$\mathscr{M}^0$ be an $L^0(\mm_{\rm Y})$-normed module.
Then there exists (up to unique isomorphism) a unique couple $(\mathscr{N}^0,\sfT)$, where $\mathscr{N}^0$
is an $L^0(\mm_{\rm X})$-normed module and $\sfT:\,\mathscr{M}^0\to\mathscr{N}^0$ is a linear map, such that
\begin{itemize}
\item[$\rm(i)$] $|\sfT v|=|v|\circ\varphi$ holds $\mm_{\rm X}$-a.e.\ in $\rm X$,
for every $v\in\mathscr{M}^0$,
\item[$\rm(ii)$] the set of all the elements of the form $\sum_{i=1}^n\nchi_{A_i}\sfT v_i$,
with $(A_i)_{i=1}^n\subseteq\mathcal{A}_{\rm X}$ partition of $\rm X$ and $v_1,\ldots,v_n\in\mathscr{M}^0$, is dense
in $\mathscr{N}^0$.
\end{itemize}
Namely, if two couples $(\mathscr{N}^0_1,\sfT_1)$ and $(\mathscr{N}^0_2,\sfT_2)$ as above fulfill both
$\rm (i)$ and $\rm (ii)$, then there exists a unique module isomorphism $\Phi:\,\mathscr{N}^0_1\to\mathscr{N}^0_2$
such that the diagram
\[
\begin{tikzcd}
\mathscr{M}^0 \arrow{r}{\sfT_1} \arrow[swap]{rd}{\sfT_2} & \mathscr{N}^0_1 \arrow{d}{\Phi} \\
 & \mathscr{N}^0_2
\end{tikzcd}
\]
is commutative.
\end{theorem}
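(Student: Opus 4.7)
The plan is to adapt the construction of the $L^p$-pullback module from \cite[Section 1.6]{Gigli14} to the $L^0$ setting, where the relevant topology is that of convergence in $\mm_{\rm X}$-measure on $\mathscr{N}^0$ (induced e.g.\ by the pseudo-distance $\sfd(v,w):=\int\min\{|v-w|,1\}\,\d\tilde\mm_{\rm X}$ for some finite measure $\tilde\mm_{\rm X}$ equivalent to $\mm_{\rm X}$). Both uniqueness and existence then reduce to standard density/completion arguments, with the pointwise norm constraint $\rm(i)$ serving as the sole compatibility condition one must propagate.

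For uniqueness, given two candidates $(\mathscr{N}^0_j,\sfT_j)$, $j=1,2$, I would define $\Phi$ on the dense subset provided by $\rm(ii)$ by
\[
\Phi\Big(\sum_{i=1}^n\nchi_{A_i}\sfT_1 v_i\Big):=\sum_{i=1}^n\nchi_{A_i}\sfT_2 v_i.
\]
Well-definedness and pointwise-norm preservation follow directly from $\rm(i)$, because both sides have pointwise norm $\sum_i\nchi_{A_i}(|v_i|\circ\varphi)$; in particular, any finite sum representing $0$ in $\mathscr{N}^0_1$ is sent to something of zero pointwise norm, hence $0$, in $\mathscr{N}^0_2$. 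Since $\Phi$ is linear, commutes with multiplication by characteristic functions, and is an isometry for the $L^0$-topologies on both sides, it extends uniquely by density and completeness to a bijective module homomorphism; compatibility with the full $L^0(\mm_{\rm X})$-action then follows by approximating any $L^0$-scalar by simple functions and exploiting continuity of scalar multiplication.

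For existence, the bounded compression assumption ensures that $f\mapsto f\circ\varphi$ is a well-defined continuous algebra morphism from $L^0(\mm_{\rm Y})$ to $L^0(\mm_{\rm X})$. I would then introduce the ``pre-pullback'' $\mathscr{N}^{\rm pre}$ of formal finite sums $\sum_i\nchi_{A_i}\otimes v_i$, with $(A_i)$ a Borel partition of $\rm X$ and $v_i\in\mathscr{M}^0$, modulo the natural identifications (common refinement of partitions, and bilinearity of $\otimes$ relative to the action of simple functions on the $\nchi_{A_i}$-factor), equip it with the candidate pointwise norm $\big|\sum_i\nchi_{A_i}\otimes v_i\big|:=\sum_i\nchi_{A_i}(|v_i|\circ\varphi)\in L^0(\mm_{\rm X})$, quotient by the subspace of elements of zero pointwise norm, and take the completion $\mathscr{N}^0$ with respect to $\sfd$. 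Defining $\sfT v:=[\nchi_{\rm X}\otimes v]$, properties $\rm(i)$ and $\rm(ii)$ are immediate from the construction.

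The main technical obstacle is verifying that $\mathscr{N}^0$ really is an $L^0(\mm_{\rm X})$-normed module in the sense of \cite{Gigli14}, i.e.\ that the pointwise norm descends to the quotient, extends continuously to the completion, and that the action of simple functions on $\mathscr{N}^{\rm pre}$ extends to an action of all of $L^0(\mm_{\rm X})$ still satisfying $|fv|=|f|\,|v|$ $\mm_{\rm X}$-a.e. The subtle point is that, unlike in the $L^p$-framework, there is no uniform operator bound for multiplication by a general $L^0$-scalar; this forces a truncation argument, where the identity $|fv|=|f|\,|v|$ is first inherited from the pre-pullback for bounded $f$, and then extended to arbitrary $f\in L^0(\mm_{\rm X})$ by writing $f$ as an $L^0$-limit of its truncations and using joint continuity of the scalar action together with continuity of $|\cdot|$ in measure. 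Once these consistency checks are in place, the commutativity of the diagram in the uniqueness part becomes automatic from the definition of $\Phi$ on the generating simple sums.
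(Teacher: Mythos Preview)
Your uniqueness argument is essentially identical to the paper's. For existence, however, you take a genuinely different route: you build the $L^0$-pullback from scratch via a pre-pullback/quotient/completion construction, whereas the paper exploits the already-available $L^p$-theory. Concretely, the paper fixes some $p\in[1,\infty)$, restricts to the $L^p$-normed submodule $\mathscr{M}:=\{v\in\mathscr{M}^0:|v|\in L^p(\mm_{\rm Y})\}$ (whose $L^0$-completion is $\mathscr{M}^0$), forms the ordinary $L^p$-pullback $\varphi^*\mathscr{M}$ from \cite{Gigli14}, and then sets $\mathscr{N}^0:=(\varphi^*\mathscr{M})^0$; the map $\sfT$ is the unique continuous extension of $\varphi^*\colon\mathscr{M}\to\varphi^*\mathscr{M}\hookrightarrow\mathscr{N}^0$ to all of $\mathscr{M}^0$. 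The advantage of the paper's shortcut is that the ``main technical obstacle'' you correctly flag---verifying that the completion is a genuine $L^0(\mm_{\rm X})$-normed module, with the scalar action and pointwise norm behaving properly---is inherited for free from the $L^p$-construction and the general machinery of $L^0$-completions, so no separate truncation argument is needed. Your direct construction is more self-contained and perfectly valid, but it redoes by hand checks that the paper outsources to prior results.
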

\begin{proof} \textbf{Existence.}
Fix $p\in[1,\infty)$ and let $\mathscr{M}:=\big\{v\in\mathscr{M}^0\,:\,|v|\in L^p(\mm_{\rm Y})\big\}$.
Hence $\mathscr{M}$ is an $L^p(\mm_{\rm Y})$-normed module and $\mathscr{M}^0$ is
the $L^0$-completion of $\mathscr{M}$.
Define $\mathscr{N}^0:=(\varphi^*\mathscr{M})^0$.
We construct the map $\sfT$ in the following way:
since the space $\mathscr{M}$ is dense in $\mathscr{M}^0$
and $\varphi^*\mathscr{M}$ is continuously embedded into $\mathscr{N}^0$,
the map $\varphi^*:\,\mathscr{M}\to\varphi^*\mathscr{M}$ can be uniquely extended to a linear continuous
function $\sfT:\,\mathscr{M}^0\to\mathscr{N}^0$. Such function $\sfT$ also
satisfies (i). Moreover, it is clear that the diagram
\[
\begin{tikzcd}
\mathscr{M} \arrow[hookrightarrow]{r} \arrow[swap]{d}{\varphi^*} & \mathscr{M}^0 \arrow{d}{\sfT} \\
\varphi^*\mathscr{M} \arrow[hookrightarrow]{r} & \mathscr{N}^0
\end{tikzcd}
\]
commutes. Therefore, since the set $\{\varphi^*v\,:\,v\in\mathscr{M}\}$ generates $\mathscr{N}^0$
as $L^0(\mm_{\rm X})$-normed module, we have in particular
that the set $\{\sfT v\,:\,v\in\mathscr{M}^0\}$ generates $\mathscr{N}^0$ as $L^0(\mm_{\rm X})$-normed module,
proving (ii).\newline
\textbf{Uniqueness.}
Let us choose $(\mathscr{N}^0_1,\sfT_1)$, $(\mathscr{N}^0_2,\sfT_2)$ satisfying $\rm (i)$ and $\rm (ii)$.
For $j=1,2$, denote by $V_j$ the set of $\sum_{i=1}^n\nchi_{A_i}\sfT_j v_i$ as in $\rm (ii)$.
Then the unique $L^0(\mm_{\rm X})$-linear map $\Psi:\,V_1\to V_2$, which
satisfies $\Psi\circ\sfT_1=\sfT_2$, is necessarily given by
$\Psi\big(\sum_{i=1}^n\nchi_{A_i}\sfT_1 v_i\big)=\sum_{i=1}^n\nchi_{A_i}\sfT_2 v_i$.
By requiring the condition $\rm (i)$, we force the $\mm_{\rm X}$-a.e.\ equality
$$\left|\sum_{i=1}^n\nchi_{A_i}\sfT_2 v_i\right|=\sum_{i=1}^n\nchi_{A_i}\,|v_i|\circ\varphi=
\left|\sum_{i=1}^n\nchi_{A_i}\sfT_1 v_i\right|,$$
which shows that the map $\Psi$ is actually well-defined and continuous.
There exists a unique module morphism $\Phi:\,\mathscr{N}^0_1\to\mathscr{N}^0_2$ that extends $\Psi$,
by density of $V_1$ in $\mathscr{N}^0_1$.
Such map $\Phi$ clearly satisfies $\Phi\circ\sfT_1=\sfT_2$. Finally, by interchanging the roles
of $\mathscr{N}^0_1$ and $\mathscr{N}^0_2$, one can easily conclude that $\Phi$ is an isomorphism, getting the thesis.
\end{proof}
\begin{definition}[Pullback module]
Any couple $(\mathscr{N}^0,\sfT)$ that satisfies Theorem \ref{thm:pullback_L0}
will be unambiguously denoted by $\big(\varphi^*\mathscr{M}^0,\varphi^*\big)$.
Moreover, we shall call $\varphi^*\mathscr{M}^0$ the \emph{pullback module of $\mathscr{M}^0$}
and $\varphi^*$ the \emph{pullback map}.
\end{definition}
\begin{proposition}[Universal property of the pullback]\label{prop:pullback_L0_univ_prop}
Let $({\rm X},\mathcal{A}_{\rm X},\mm_{\rm X})$, $({\rm Y},\mathcal{A}_{\rm Y},\mm_{\rm Y})$ be
$\sigma$-finite measured spaces. Let $\varphi:\,{\rm X}\to{\rm Y}$ be a map of bounded compression. Let
$\mathscr{M}^0$ be an $L^0(\mm_{\rm Y})$-normed module and let $\mathscr{N}^0$ be an $L^0(\mm_{\rm X})$-normed module.
Consider a linear operator $\sfT:\,\mathscr{M}^0\to\mathscr{N}^0$ such that
\begin{equation}\label{eq:univ_prop_1}
|\sfT v|\leq\ell\,|v|\circ\varphi\;\;\;\mm_{\rm X}\mbox{-a.e.\ in }{\rm X},
\quad\mbox{ for every }v\in\mathscr{M}^0,
\end{equation}
for a suitable map $\ell\in L^0(\mm_{\rm X})$. Then there exists a unique $L^0(\mm_{\rm X})$-linear and continuous
operator $\widehat{\sfT}:\,\varphi^*\mathscr{M}^0\to\mathscr{N}^0$ such that $\widehat{\sfT}\circ\varphi^*=\sfT$ and
\begin{equation}\label{eq:univ_prop_2}
|\widehat{\sfT}w|\leq\ell\,|w|\;\;\;\mm_{\rm X}\mbox{-a.e.\ in }{\rm X},
\quad\mbox{ for every }w\in\varphi^*\mathscr{M}^0.
\end{equation}
\end{proposition}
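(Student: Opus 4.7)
The proof follows the standard template for universal properties: define $\widehat{\sfT}$ first on a generating dense subfamily given by property (ii) of Theorem \ref{thm:pullback_L0}, verify well-definedness and the pointwise bound there, then extend by continuity and density. Uniqueness is essentially forced by the two requirements $\widehat{\sfT}\circ\varphi^*=\sfT$ and $L^0(\mm_{\rm X})$-linearity, combined with density.

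\emph{Step 1: define $\widehat{\sfT}$ on simple combinations.} For any element $u=\sum_{i=1}^n\nchi_{A_i}\varphi^*v_i$ with $(A_i)$ a Borel partition of $\rm X$ and $v_i\in\mathscr{M}^0$, set
\[
\widehat{\sfT} u:=\sum_{i=1}^n\nchi_{A_i}\sfT v_i\in\mathscr{N}^0.
\]
For well-definedness, given two such representations of the same element, refine the two partitions to a common one $(C_k)$ and reduce to showing that if $\nchi_{C_k}\varphi^*(v-w)=0$ then $\nchi_{C_k}\sfT(v-w)=0$. Using $|\varphi^*(v-w)|=|v-w|\circ\varphi$ and then hypothesis \eqref{eq:univ_prop_1}, we obtain
\[
|\nchi_{C_k}\sfT(v-w)|=\nchi_{C_k}|\sfT(v-w)|\leq\nchi_{C_k}\,\ell\,|v-w|\circ\varphi=\ell\,|\nchi_{C_k}\varphi^*(v-w)|=0
\]
$\mm_{\rm X}$-a.e., which gives well-definedness. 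The same computation applied to a general simple combination yields
\[
|\widehat{\sfT} u|\leq\sum_{i=1}^n\nchi_{A_i}|\sfT v_i|\leq\sum_{i=1}^n\nchi_{A_i}\,\ell\,|v_i|\circ\varphi=\ell\,|u|\qquad\mm_{\rm X}\text{-a.e.}
\]
Linearity over the algebra of $\mm_{\rm X}$-a.e.\ simple functions on $\rm X$ is immediate from the definition (after a common refinement of partitions).

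\emph{Step 2: extension by density.} Denote by $V$ the $L^0(\mm_{\rm X})$-linear span of $\{\varphi^*v:v\in\mathscr{M}^0\}$; by property (ii) in Theorem \ref{thm:pullback_L0}, $V$ is dense in $\varphi^*\mathscr{M}^0$. The pointwise bound $|\widehat{\sfT} u|\leq\ell|u|$ established in Step 1 shows that $\widehat{\sfT}:V\to\mathscr{N}^0$ is continuous with respect to the $L^0$-topologies, since multiplication by the fixed element $\ell\in L^0(\mm_{\rm X})$ is continuous on $L^0(\mm_{\rm X})$ (this is where the $\sigma$-finiteness assumption is used, so that $L^0$-convergence coincides with local convergence in measure). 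Hence $\widehat{\sfT}$ extends uniquely to a continuous linear map $\widehat{\sfT}:\varphi^*\mathscr{M}^0\to\mathscr{N}^0$, and passing to the limit in the inequality gives \eqref{eq:univ_prop_2}. The $L^0(\mm_{\rm X})$-linearity on $V$ passes to the closure by continuity of the module multiplication and of $\widehat{\sfT}$.

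\emph{Step 3: uniqueness.} Any $L^0(\mm_{\rm X})$-linear continuous map $\widetilde{\sfT}:\varphi^*\mathscr{M}^0\to\mathscr{N}^0$ with $\widetilde{\sfT}\circ\varphi^*=\sfT$ must agree with $\widehat{\sfT}$ on $V$ by $L^0(\mm_{\rm X})$-linearity, and hence everywhere by density and continuity. I expect the only mildly delicate point to be Step 1, where one has to check that the apparent ambiguity in the expression $\sum\nchi_{A_i}\varphi^*v_i$ is killed exactly by inequality \eqref{eq:univ_prop_1}; once this is observed, the remaining arguments are routine applications of the density/extension mechanism already used in the existence part of Theorem \ref{thm:pullback_L0}.
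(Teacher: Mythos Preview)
Your proof is correct and follows essentially the same route as the paper's: define $\widehat{\sfT}$ on the dense set $V$ of simple combinations $\sum_i\nchi_{A_i}\varphi^*v_i$, use \eqref{eq:univ_prop_1} to get both well-definedness and the pointwise bound $|\widehat{\sfT}w|\leq\ell|w|$, then extend by continuity and check $L^0(\mm_{\rm X})$-linearity by approximation. The paper compresses your Step~1 into a single chain of inequalities (deriving well-definedness directly from the bound rather than via a separate common-refinement argument), but the substance is identical.
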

\begin{proof}
Denote by $V$ the set of all elements of the form $\sum_{i=1}^n\nchi_{A_i}\,\varphi^*v_i$,
with $A_1,\ldots,A_n\in\mathcal{A}_{\rm X}$ partition of $\rm X$ and $v_1,\ldots,v_n\in\mathscr{M}^0$, so that $V$ is
a dense linear subspace of $\varphi^*\mathscr{M}^0$ by property (ii) of Theorem \ref{thm:pullback_L0}.
Any $L^0(\mm_{\rm X})$-linear map $\widehat{\sfT}:\,\varphi^*\mathscr{M}^0\to\mathscr{N}^0$
with $\widehat{\sfT}\circ\varphi^*=\sfT$ must satisfy
\begin{equation}\label{eq:univ_prop_3}
\widehat{\sfT}w=\sum_{i=1}^n\nchi_{A_i}\widehat{\sfT}(\varphi^*v_i)=\sum_{i=1}^n\nchi_{A_i}\sfT v_i
\quad\mbox{ for }w=\sum_{i=1}^n\nchi_{A_i}\,\varphi^*v_i\in V.
\end{equation}
Consider $\widehat{\sfT}:\,V\to\mathscr{N}^0$ defined as in \eqref{eq:univ_prop_3}, then \eqref{eq:univ_prop_1} grants that
\begin{equation}\label{eq:univ_prop_4}
|\widehat{\sfT}w|=\sum_{i=1}^n\nchi_{A_i}\,|\sfT v_i|\leq\ell\sum_{i=1}^n\nchi_{A_i}\,|v_i|\circ\varphi
=\ell\sum_{i=1}^n\nchi_{A_i}\,|\varphi^*v_i|=\ell\,|w|\quad\mbox{ holds }\mm_{\rm X}\mbox{-a.e.,}
\end{equation}
which shows that $\widehat{\sfT}:\,V\to\mathscr{N}^0$ is well-defined (in the sense that $\widehat{\sfT}w$ depends only
on $w$ and not on the way of representing it) and continuous.
Therefore $\widehat{\sfT}$ can be uniquely extended to a linear continuous
operator $\widehat{\sfT}:\,\varphi^*\mathscr{M}^0\to\mathscr{N}^0$. We readily deduce from the
definition \eqref{eq:univ_prop_3} that the equality $f\,\widehat{\sfT}w=\widehat{\sfT}(fw)$ holds for
$f:\,\rm X\to\R$ simple function, so that $\widehat{\sfT}$ can be shown to be $L^0(\mm_{\rm X})$-linear by
an approximation argument. Finally, it follows from \eqref{eq:univ_prop_4} that the inequality
\eqref{eq:univ_prop_2} is satisfied for $w\in V$, whence \eqref{eq:univ_prop_2} holds
by density of $V$ in $\varphi^*\mathscr{M}^0$.
\end{proof}
\subsection{Some properties of test plans}

For the sake of brevity, hereafter we shall use the notation $\mathcal{L}_1$ to indicate
the $1$-dimensional Lebesgue measure restricted to $[0,1]$, namely
\[
\mathcal{L}_1:=\mathcal{L}^1\restr{[0,1]}.
\]
Let $\ppi\in\mathscr{P}\big(\Gamma({\rm X})\big)$ be any fixed test plan on $\rm X$,
whence $\big(\Gamma({\rm X}),\mathsf{d}_{\Gamma({\rm X})},\ppi\big)$ is a metric measure space.
Given that the map $\e_t$ is of bounded compression, it makes sense to consider the pullback module
$\e_t^* L^2(T{\rm X})$. Observe that $\e_t^*L^2(T{\rm X})$ is a Hilbert module
as soon as $({\rm X},\sfd,\mm)$ is infinitesimally Hilbertian.
\begin{remark}\label{rmk:f_circ_e_t_Borel}{\rm
Let us define the map $\e:\,\Gamma({\rm X})\times[0,1]\to{\rm X}$ as $\e(\gamma,t):=\gamma_t$
for every $\gamma\in\Gamma({\rm X})$ and $t\in[0,1]$. It can be easily proved that the map $\e$ is continuous. This grants
that, given any Borel map $f:\,{\rm X}\to\R$, the function $f\circ\e$ is Borel.
Moreover, observe that
$$(\ppi\times\mathcal{L}_1)\big(\e^{-1}(A)\big)=\int_0^1\ppi\big(\e_t^{-1}(A)\big)\,\d t
\leq\sfC(\ppi)\,\mm(A)\quad\mbox{ for every }A\in\mathscr{B}({\rm X})$$
by Fubini theorem, in other words it holds that $\e_*(\ppi\times\mathcal{L}_1)\leq\sfC(\ppi)\,\mm$.
Therefore one has that the composition $f\circ\e\in L^0(\ppi\times\mathcal{L}_1)$ is well-defined for any $f\in L^0(\mm)$.
\fr}\end{remark}
\begin{theorem}\label{thm:cont_f_circ_e_t}
Let $({\rm X},\sfd,\mm)$ be a metric measure space. Let $\ppi$ be a test plan on $\rm X$. Then
\begin{equation}\label{eq:cont_f_circ_e_t_1}
\mbox{ for every }
f\in L^1(\mm)\mbox{ the map }
[0,1]\ni t\longmapsto f\circ\e_t\in L^1(\ppi)\;\mbox{ is continuous}.
\end{equation}
In particular, the map $[0,1]\ni t\mapsto\int f\circ\e_t\,\d\ppi$ is
continuous for every $f\in L^1(\mm)$.
\end{theorem}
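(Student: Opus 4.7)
The plan is to prove the continuity first for continuous bounded $f$, then extend to $L^1(\mm)$ by a density argument that exploits the defining bound $(\e_t)_*\ppi\leq \sfC(\ppi)\,\mm$ of a test plan.

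\textbf{Step 1: continuous bounded $f$.} Suppose $f\in C_b({\rm X})$. Since $\ppi$ is concentrated on continuous curves, for each such $\gamma$ the scalar function $t\mapsto f(\gamma_t)=(f\circ\e_t)(\gamma)$ is continuous on $[0,1]$. In particular, for $s\to t$ we have $|f\circ\e_s-f\circ\e_t|(\gamma)\to 0$ pointwise $\ppi$-a.e., while $|f\circ\e_s-f\circ\e_t|\leq 2\|f\|_{\infty}$ provides a $\ppi$-integrable dominant (recall $\ppi$ is a probability measure). Dominated convergence then gives
\[
\lim_{s\to t}\|f\circ\e_s-f\circ\e_t\|_{L^1(\ppi)}=0,
\]
so the map in \eqref{eq:cont_f_circ_e_t_1} is continuous when $f\in C_b({\rm X})$.

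\textbf{Step 2: density and uniform approximation.} For general $f\in L^1(\mm)$, the bounded compression bound $(\e_t)_*\ppi\leq \sfC(\ppi)\,\mm$ yields, for every $g\in L^1(\mm)$ and every $t\in[0,1]$, the inequality
\[
\|g\circ\e_t\|_{L^1(\ppi)}=\int|g|\circ\e_t\,\d\ppi=\int|g|\,\d(\e_t)_*\ppi\leq \sfC(\ppi)\,\|g\|_{L^1(\mm)}.
\]
Pick a sequence $(f_n)\subseteq C_b({\rm X})\cap L^1(\mm)$ with $f_n\to f$ in $L^1(\mm)$ (which exists since ${\rm X}$ is a Polish space and $\mm$ is a Borel measure, so bounded continuous functions are dense in $L^1(\mm)$). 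Applied to $g:=f-f_n$, the above bound gives
\[
\sup_{t\in[0,1]}\|f\circ\e_t-f_n\circ\e_t\|_{L^1(\ppi)}\leq \sfC(\ppi)\,\|f-f_n\|_{L^1(\mm)}\xrightarrow[n\to\infty]{}0.
\]
Hence $t\mapsto f\circ\e_t$ is the uniform limit in $L^1(\ppi)$ of the continuous maps $t\mapsto f_n\circ\e_t$ furnished by Step 1, and so it is itself continuous.

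\textbf{Step 3: the ``in particular'' statement.} The map $g\mapsto\int g\,\d\ppi$ is a bounded linear functional on $L^1(\ppi)$, so composition with the continuous curve $t\mapsto f\circ\e_t$ yields a continuous real-valued map $t\mapsto\int f\circ\e_t\,\d\ppi$.

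The only conceptual point is Step 2, namely having enough continuous bounded functions at our disposal to approximate an arbitrary $L^1(\mm)$ element; this is a mild regularity assumption on $({\rm X},\sfd,\mm)$ (satisfied automatically in the $\RCD$ setting of the paper), and once granted, the whole argument is short.
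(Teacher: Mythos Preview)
Your proof is correct and follows essentially the same approach as the paper: both establish continuity for $f\in C_b({\rm X})$ via dominated convergence, then use the bounded-compression estimate $\|g\circ\e_t\|_{L^1(\sppi)}\leq\sfC(\ppi)\|g\|_{L^1(\smm)}$ to pass to general $f\in L^1(\mm)$ by approximation with $C_b\cap L^1$. Your phrasing of Step~2 as ``uniform limit of continuous curves'' is slightly cleaner than the paper's explicit $\varlimsup$ computation, but the underlying argument is identical.
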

\begin{proof}
First of all, we claim that
\begin{equation}\label{eq:cont_f_circ_e_t_2}
\lim_{s\to t}\int|f\circ\e_s-f\circ\e_t|\,\d\ppi=0\quad\mbox{ if }f
\in C_b({\rm X})\cap L^1(\mm)\mbox{ and }t\in[0,1].
\end{equation}
To prove it, note that $|f\circ\e_s-f\circ\e_t|(\gamma)\leq 2\,{\|f\|}_{L^\infty(\mm)}$
for every $\gamma\in\Gamma({\rm X})$ and $t,s\in[0,1]$.
Moreover, $\big|f(\gamma_s)-f(\gamma_t)\big|\to 0$ as $s\to t$ by continuity, for every $\gamma\in\Gamma({\rm X})$
and $t\in[0,1]$. Hence we obtain \eqref{eq:cont_f_circ_e_t_2} as a consequence of the dominated convergence theorem.
Observe also that
\begin{equation}\label{eq:cont_f_circ_e_t_3}
L^1(\mm)\ni f\longmapsto f\circ\e_t\in L^1(\ppi)\;
\mbox{ is a linear bounded map,}\quad\mbox{ for every }t\in[0,1].
\end{equation}
Indeed, $\int|f\circ\e_t|\,\d\ppi\leq\sfC(\ppi)\int|f|\,\d\mm$ is satisfied for every $f\in L^1(\mm)$.
Now fix $f\in L^1(\mm)$. Choose a sequence ${(f_n)}_n\subseteq C_b({\rm X})\cap L^1(\mm)$ that
converges to $f$ with respect to the $L^1$-norm. Given $t\in[0,1]$ and $n\in\N$,
we have that \eqref{eq:cont_f_circ_e_t_2} and \eqref{eq:cont_f_circ_e_t_3} yield
\begin{equation}\label{eq:cont_f_circ_e_t_4}\begin{split}
\underset{s\to t}{\overline{\lim}}\int|f\circ\e_s-f\circ\e_t|\,\d\ppi
&\leq 2\,\sfC(\ppi)\,{\|f-f_n\|}_{L^1(\mm)}+
\underset{s\to t}{\overline{\lim}}\int|f_n\circ\e_s-f_n\circ\e_t|\,\d\ppi\\
&= 2\,\sfC(\ppi)\,{\|f-f_n\|}_{L^1(\mm)}.
\end{split}\end{equation}
By letting $n\to\infty$ in \eqref{eq:cont_f_circ_e_t_4}, we finally conclude that
$\int|f\circ\e_s-f\circ\e_t|\,\d\ppi\to 0$ as $s\to t$, which proves \eqref{eq:cont_f_circ_e_t_1}.
The last statement is obvious.
\end{proof}

Under further assumptions on $({\rm X},\sfd,\mm)$, we have at disposal also a notion
of `speed' $\ppi'_t$ of the test plan $\ppi$ at time $t$, as described in the following result.
For the proof of such fact, we refer to \cite[Theorem 2.3.18]{Gigli14} or \cite[Theorem/Definition 1.32]{Gigli17}.
\begin{theorem}[Speed of a test plan]\label{thm:speed_test_plan}
Let $({\rm X},\sfd,\mm)$ be a metric measure space such that $L^2(T{\rm X})$ is separable.
Let $\ppi$ be a test plan on $\rm X$. Then there exists a unique
(up to $\mathcal{L}_1$-a.e.\ equality) family $\ppi'_t\in\e_t^*L^2(T{\rm X})$ such that
\begin{equation}\label{eq:speed_test_plan_1}
\exists\,L^1(\ppi)\text{-}\lim_{h\to 0}\frac{f\circ\e_{t+h}-f\circ\e_t}{h}=
(\e_t^*\d f)(\ppi'_t)\quad\mbox{ for }\mathcal{L}^1\mbox{-a.e.\ }t\in[0,1],
\end{equation}
for every $f\in W^{1,2}({\rm X})$. Moreover, the function $(\gamma,t)\mapsto|\ppi'_t|(\gamma)$ is
(the equivalence class of) a Borel map such that for $\mathcal{L}^1$-a.e.\ $t\in[0,1]$ it holds that
\begin{equation}\label{eq:speed_test_plan_2}
|\ppi'_t|(\gamma)=|\dot{\gamma}_t|\quad\mbox{ for }\ppi\mbox{-a.e.\ }\gamma\in AC\big([0,1],{\rm X}\big).
\end{equation}
\end{theorem}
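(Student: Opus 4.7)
The strategy is to construct $\ppi'_t$ by duality, using the fact that a test plan automatically detects the differential of any Sobolev function along its curves, and then to invoke the universal property of the pullback (Proposition \ref{prop:pullback_L0_univ_prop}) to realise the resulting linear functional as a tangent vector in $\e_t^*L^2(T\X)$.

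First I would recall the defining property of test plans: for every $f\in W^{1,2}(\X)$ and $\ppi$-a.e.\ $\gamma$, the map $t\mapsto f(\gamma_t)$ is absolutely continuous with
\[
\big|\tfrac{\d}{\dt}(f\circ\gamma_t)\big|\leq |\nabla f|(\gamma_t)\,|\dot\gamma_t|\quad\text{for a.e.\ }t,
\]
and $(\gamma,t)\mapsto|\dot\gamma_t|$ admits a Borel representative. Using this bound, for $\mathcal{L}^1$-a.e.\ $t$ the difference quotient $\tfrac{f\circ\e_{t+h}-f\circ\e_t}{h}$ converges in $L^1(\ppi)$ (not merely pointwise $\ppi$-a.e.) to a limit $D_t f\in L^1(\ppi)$. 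Indeed, it is pointwise dominated by $\tfrac{1}{h}\int_t^{t+h}|\nabla f|(\gamma_r)|\dot\gamma_r|\,\d r$, and a standard Bochner--Lebesgue differentiation argument (in the spirit of Theorem \ref{thm:cont_f_circ_e_t}) shows that for a.e.\ $t$ this majorant converges in $L^1(\ppi)$ to $|\nabla f|(\gamma_t)|\dot\gamma_t|$; a generalised dominated convergence then upgrades pointwise to $L^1(\ppi)$ convergence. Using that $L^2(T\X)$ is separable, I would then choose a countable family of Sobolev functions whose differentials are dense in $L^2(T^*\X)$, select a common full-measure set of $t$'s on which $D_t$ is simultaneously defined, and extend to all of $W^{1,2}(\X)$ by continuity.

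The tangent vector $\ppi'_t$ comes next. Writing $v_t(\gamma):=|\dot\gamma_t|$, the linear map $f\mapsto D_t f$ satisfies $|D_t f|\leq|\nabla f|\circ\e_t\cdot v_t$ $\ppi$-a.e., so it depends on $f$ only through $\d f$, and Proposition \ref{prop:pullback_L0_univ_prop} extends it to an $L^0(\ppi)$-linear continuous operator $\e_t^*L^2(T^*\X)\to L^0(\ppi)$ with pointwise operator norm bounded by $v_t$. Such operators are precisely elements of the dual pullback module $\e_t^*L^2(T\X)$, giving $\ppi'_t$ with $(\e_t^*\d f)(\ppi'_t)=D_t f$ and $|\ppi'_t|\leq v_t$ $\ppi$-a.e., which yields \eqref{eq:speed_test_plan_1} and half of \eqref{eq:speed_test_plan_2}. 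Uniqueness is immediate, since the $\e_t^*\d f$ generate $\e_t^*L^2(T^*\X)$ and hence determine any element of its dual.

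The genuine obstacle is the matching lower bound $|\ppi'_t|\geq|\dot\gamma_t|$ needed to close \eqref{eq:speed_test_plan_2}. This does not come from the construction; it requires the non-trivial identity that the metric speed is sharply captured by Sobolev differentials, namely $|\dot\gamma_t|=\sup_f \tfrac{|(f\circ\gamma)'(t)|}{|\nabla f|(\gamma_t)}$ for $\ppi$-a.e.\ $\gamma$ and a.e.\ $t$, with the supremum running over a suitable countable $W^{1,2}$-dense family. This identification is the substantive input from the weak upper gradient / relaxed slope theory of \cite{AmbrosioGigliSavare11, Gigli14}, and it is what promotes the a priori upper bound on $|\ppi'_t|$ to the sharp equality.
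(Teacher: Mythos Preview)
The paper does not prove this statement: the sentence immediately preceding it reads ``For the proof of such fact, we refer to \cite[Theorem 2.3.18]{Gigli14} or \cite[Theorem/Definition 1.32]{Gigli17}.'' There is therefore no in-paper proof to compare against.

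Your outline is a faithful reconstruction of the argument given in those references. The three steps---$L^1(\ppi)$-convergence of the difference quotients via a dominated-convergence argument with majorant $\tfrac1h\int_t^{t+h}|Df|(\gamma_r)|\dot\gamma_r|\,\d r$, the passage from the linear functional $f\mapsto D_tf$ to a vector $\ppi'_t\in\e_t^*L^2(T\X)$ via the universal property of the pullback together with the identification of the dual of $\e_t^*L^2(T^*\X)$ with $\e_t^*L^2(T\X)$, and finally the appeal to the sharp representation of metric speed by minimal weak upper gradients for the inequality $|\ppi'_t|\geq|\dot\gamma_t|$---are exactly the ingredients used in \cite{Gigli14}. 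You are also right that the last step is where the substantive analytic input lies; everything before it is soft.
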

\begin{proposition}\label{prop:perborelsp}
Let $({\rm X},\sfd,\mm)$ be a metric measure space such that  $L^2(T{\rm X})$ is separable, let $\ppi$ be a test plan on $\rm X$ and $f\in W^{1,2}(\X)$. Then the a.e.\ defined map $[0,1]\ni t\mapsto \e_t^*\d f(\ppi'_t)\in L^1(\ppi)$ is a.e.\ equal to a Borel map.
\end{proposition}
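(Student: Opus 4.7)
The plan is to exhibit $g(t) := \e_t^*\d f(\ppi'_t)$ as the pointwise a.e.\ limit, in the metric space $L^1(\ppi)$, of a sequence of continuous maps $[0,1]\to L^1(\ppi)$. Since pointwise limits of Borel maps into a metric space are Borel on the (Borel) set where the limit exists, this will yield the desired Borel representative after redefinition on a null set.

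The first ingredient is the continuity of $F:[0,1]\to L^1(\ppi)$, $F(t):=f\circ\e_t$. For $f\in L^1(\mm)$ this is exactly Theorem \ref{thm:cont_f_circ_e_t}; since here we only have $f\in W^{1,2}(\X)\subset L^2(\mm)$, I would repeat the proof of that theorem in $L^2(\ppi)$ --- dominated convergence for bounded continuous approximations, then density in $L^2(\mm)$, using $(\e_t)_*\ppi\leq\sfC(\ppi)\mm$ to make $g\mapsto g\circ\e_t$ a bounded operator $L^2(\mm)\to L^2(\ppi)$ with norm independent of $t$ --- and then compose with the continuous inclusion $L^2(\ppi)\hookrightarrow L^1(\ppi)$, valid because $\ppi$ is a probability measure. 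Armed with the continuity of $F$, I define for each $n\in\N^+$
\[
G_n(t):=\begin{cases} n\bigl(F(t+1/n)-F(t)\bigr), & t\in[0,1-1/n],\\ 0, & t\in(1-1/n,1],\end{cases}
\]
which is continuous on $[0,1-1/n]$ and so Borel as a map $[0,1]\to L^1(\ppi)$. Applying \eqref{eq:speed_test_plan_1} along the sequence $h_n:=1/n$, I get $G_n(t)\to g(t)$ in $L^1(\ppi)$ for $\mathcal{L}^1$-a.e.\ $t\in[0,1]$.

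To finish, let $A:=\{t\in[0,1]:(G_n(t))_n\text{ is Cauchy in }L^1(\ppi)\}$, which is Borel (as a countable combination of level sets of the continuous maps $t\mapsto\|G_n(t)-G_m(t)\|_{L^1(\ppi)}$) and of full $\mathcal{L}^1$-measure. Set $\tilde g(t):=\lim_n G_n(t)$ on $A$ and $\tilde g(t):=0$ off $A$. Then $\tilde g=g$ $\mathcal{L}^1$-a.e., and $\tilde g$ is Borel by the standard fact that the pointwise limit of Borel maps into a metric space is Borel on the set of convergence --- for closed $C\subseteq L^1(\ppi)$ one writes $\tilde g^{-1}(C)\cap A$ as $\bigcap_k\bigcup_N\bigcap_{n\geq N}G_n^{-1}(C_{1/k})$, where $C_{1/k}$ is the open $1/k$-neighbourhood of $C$. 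The only mildly non-automatic point in the argument is the adaptation of Theorem \ref{thm:cont_f_circ_e_t} from $L^1(\mm)$ to $L^2(\mm)$ targets in the first step; the Banach-valued nature of the construction is otherwise handled by the separability and metric structure of $L^1(\ppi)$, and I do not foresee a substantive obstacle.
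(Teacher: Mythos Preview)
Your proof is correct and follows essentially the same route as the paper's: both establish continuity of the difference quotients $t\mapsto (f\circ\e_{t+h}-f\circ\e_t)/h$ in $L^1(\ppi)$ and then invoke the standard fact that the pointwise limit of Borel maps into a metric space is Borel on the (Borel) set of convergence. Your version is slightly more explicit---in particular you correctly flag and handle the point that $f\in L^2(\mm)$ rather than $L^1(\mm)$, which the paper leaves implicit---but the underlying argument is the same.
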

\begin{proof}
For every $h\in(0,1)$ the map $[0,1-h]\ni t\mapsto(f\circ\e_{t+h}-f\circ\e_t)/h\in L^1(\ppi)$ is continuous. Thus by classical arguments the set of $t$'s for which the limit as $h\to 0$ exists is Borel and the limit function, set, say, to 0 when the limit does not exist, is Borel. 
\end{proof}

In this paper we shall work only on $\RCD$ spaces, which in particular are so that $W^{1,2}(\X)$ is reflexive. In turn this implies, by the arguments in \cite{ACM14}, that the tangent module is separable, so that the assumptions of Theorem \ref{thm:speed_test_plan} and Proposition \ref{prop:perborelsp} above are fulfilled.

In the sequel, we will mainly focus our attention on those test plans $\ppi$ that are concentrated on
an equiLipschitz family of curves. As illustrated in the next definition, we will refer to them as
`Lipschitz test plans'.
\begin{definition}[Lipschitz test plan]\label{def:lip_test_plan}
Let $({\rm X},\sfd,\mm)$ be a metric measure space such that  $L^2(T{\rm X})$ is separable.
Then a test plan $\ppi$ on $\rm X$ is said to be a
\emph{Lipschitz test plan} provided there exists a constant $\sfL\geq 0$ such that
\begin{equation}\label{eq:lip_test_plan}
|\ppi'_t|\leq\sfL\;\mbox{ holds }\ppi\mbox{-a.e.\ in }\Gamma({\rm X}),
\quad\mbox{ for }\mathcal{L}^1\mbox{-a.e.\ }t\in[0,1],
\end{equation}
or, equivalently, such that $\ppi$ is concentrated on the family
of all the $\sfL$-Lipschitz curves in $\rm X$.
The smallest constant $\sfL\geq 0$ for which \eqref{eq:lip_test_plan} is satisfied
will be denoted by $\sfL(\ppi)$.
\end{definition}

Whenever the test plan $\ppi$ is Lipschitz, we have $(\e_t^*\d f)(\ppi'_t)\in L^2(\ppi)$
for every $f\in W^{1,2}({\rm X})$. One is then led to wonder whether in this case the limit in \eqref{eq:speed_test_plan_1}
takes place not only in $L^1(\ppi)$, but also in $L^2(\ppi)$. The answer is affirmative,
as shown in the following simple result:
\begin{proposition}\label{prop:pi'_t_in_L2}
Let $({\rm X},\sfd,\mm)$ be a metric measure space such that  $L^2(T{\rm X})$ is separable.
Let $\ppi$ be a Lipschitz test plan on $\rm X$. Let $f\in W^{1,2}({\rm X})$.
Then the mapping $t\mapsto f\circ\e_t\in L^2(\ppi)$ is Lipschitz and
\begin{equation}\label{eq:pi'_t_in_L2_1}
L^2(\ppi)\text{-}\frac{\d}{\d t}(f\circ\e_t)=(\e_t^*\d f)(\ppi'_t)
\quad\mbox{ for }\mathcal{L}^1\mbox{-a.e.\ }t\in[0,1].
\end{equation}
\end{proposition}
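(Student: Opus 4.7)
The plan has two parts: first establish that $t \mapsto f \circ \e_t \in L^2(\ppi)$ is Lipschitz, then identify its $L^2$-derivative with the formula from Theorem \ref{thm:speed_test_plan}.

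For the Lipschitz estimate, I would use the fundamental Sobolev inequality along test plans: since $f \in W^{1,2}(\X)$, for $\ppi$-a.e.\ $\gamma$ the curve $t \mapsto f(\gamma_t)$ is absolutely continuous and
\[
|f(\gamma_t)-f(\gamma_s)| \leq \int_s^t |Df|(\gamma_r)\,|\dot\gamma_r|\,\d r \qquad \text{for } s<t.
\]
Since $\ppi$ is a Lipschitz test plan, we can bound $|\dot\gamma_r| \leq \sfL(\ppi)$ for $\ppi$-a.e.\ $\gamma$ and a.e.\ $r$. Applying the Cauchy--Schwarz inequality to the right-hand side yields
\[
|f(\gamma_t)-f(\gamma_s)|^2 \leq \sfL(\ppi)^2\,(t-s)\int_s^t |Df|^2(\gamma_r)\,\d r.
\]
Integrating against $\ppi$, using Fubini and the bounded compression property $(\e_r)_*\ppi \leq \sfC(\ppi)\,\mm$, I get
\[
\|f\circ\e_t - f\circ\e_s\|_{L^2(\ppi)}^2 \leq \sfL(\ppi)^2\,\sfC(\ppi)\,\||Df|\|_{L^2(\mm)}^2\,(t-s)^2,
\]
which proves Lipschitz continuity of $t \mapsto f \circ \e_t$ as a curve in $L^2(\ppi)$.

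For the derivative, since $L^2(\ppi)$ is a Hilbert space, hence reflexive, Theorem \ref{thm:acw} applies and gives that the Lipschitz curve $t \mapsto f \circ \e_t$ is differentiable a.e.\ in $L^2(\ppi)$. Denote such derivative by $g_t$. Since convergence in $L^2(\ppi)$ implies convergence in $L^1(\ppi)$ (as $\ppi$ is a probability measure), the $L^1$-limit of the difference quotients also exists and equals $g_t$ for a.e.\ $t$. By Theorem \ref{thm:speed_test_plan} this $L^1$-limit coincides with $(\e_t^*\d f)(\ppi'_t)$ for a.e.\ $t$. Hence $g_t = (\e_t^*\d f)(\ppi'_t)$ in $L^2(\ppi)$ for a.e.\ $t \in [0,1]$, which gives \eqref{eq:pi'_t_in_L2_1}.

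The only subtle point, and the one I would be most careful about, is the passage from the $L^1$-derivative to the $L^2$-derivative: the argument only works because the Lipschitz bound in $L^2(\ppi)$ (established in the first step) guarantees via reflexivity that an $L^2$-derivative exists at a.e.\ $t$. Without this, one could only assert weak limits of difference quotients. Everything else is a direct assembly of the upper gradient inequality, Cauchy--Schwarz, and the results already stated in the excerpt.
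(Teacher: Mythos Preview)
Your proof is correct and follows essentially the same route as the paper: the Lipschitz bound via the upper-gradient inequality plus Cauchy--Schwarz and bounded compression, then Theorem~\ref{thm:acw} on the reflexive space $L^2(\ppi)$ to produce an a.e.\ $L^2$-derivative, which must agree with the $L^1$-derivative identified by Theorem~\ref{thm:speed_test_plan}. Your explicit remark that $L^2$-convergence implies $L^1$-convergence (so the two derivatives coincide where both exist) makes transparent the one step the paper leaves implicit.
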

\begin{proof}
Given any $t,s\in[0,1]$ with $s<t$, one has that
\[\begin{split}
{\big\|f\circ\e_t-f\circ\e_s\big\|}^2_{L^2(\sppi)}&=\int{\big|f(\gamma_t)-f(\gamma_s)\big|}^2\,\d\ppi(\gamma)\\
\text{(by definition of Sobolev functions)}\qquad&
\leq\int\left(\int_s^t|Df|(\gamma_r)\,|\dot{\gamma}_r|\,\d r\right)^2\d\ppi(\gamma)\\
\text{(by H\"{o}lder inequality)}\qquad&\leq(t-s)\,\sfL(\ppi)^2\int\!\!\!\int_s^t|Df|^2(\gamma_r)\,\d r\,\d\ppi(\gamma)\\
&\leq\sfC(\ppi)\,\sfL(\ppi)^2\,{\|f\|}^2_{W^{1,2}({\rm X})}\,(t-s)^2,
\end{split}\]
which shows that $t\mapsto f\circ\e_t\in L^2(\ppi)$ is a Lipschitz map. In particular, it is differentiable
at almost every $t\in[0,1]$ by Theorem \ref{thm:acw}, so that \eqref{eq:pi'_t_in_L2_1}
follows from \eqref{eq:speed_test_plan_1}.
\end{proof}

\section{Introduction of appropriate functional spaces}\label{ch:funct}

Throughout all this chapter, $(\X,\sfd,\mm)$ is a given $\RCD(K,\infty)$ space and $\ppi$ a test plan on it.

Recall that the space ${\rm TestF}(\X)$ of \emph{test functions on $\X$} is defined as
\[
{\rm TestF}(\X):=\Big\{f\in L^\infty\cap {\rm LIP}\cap W^{1,2}(\X)\cap D(\Delta)\ :\ \Delta f\in W^{1,2}(\X)\Big\}
\]
and that the space of \emph{test vector fields on $\X$} is defined as 
\[
{\rm TestV}(\X):=\Big\{\sum_{i=1}^nf_i\,\nabla g_i\ :\ n\in\N^+,\ f_i,g_i\in{\rm TestF}(\X) \text{ for every }i\Big\}\subset L^2(T\X).
\]

\subsection{Test vector fields along $\ppi$}\label{se:test}
We define the space of \emph{vector fields along $\ppi$} as:
\[
\vf:=\prod_{t\in[0,1]}\e_t^*L^2(T\X).
\]
Thus $\vf$ is the collection of maps assigning to each $t\in[0,1]$ an element of $\e_t^*L^2(T\X)$; it is a vector space w.r.t.\ pointwise operation. 

To each $V\in\vf$ we associate  the function $[\![V]\!]:\,[0,1]\to[0,+\infty)$, defined by
\[
{[\![V]\!]}_t:={\|V_t\|}_{\e_t^*L^2(T{\rm X})}\quad\text{ for every }t\in[0,1].
\]
The subspace $\testvf\subset\vf$ of \emph{test vector fields along $\ppi$} is defined as:
\[
\testvf:=\left\{t\mapsto\sum_{i=1}^n\varphi_i(t)\,\nchi_{A_i}\,\e_t^*v_i\;\bigg|\;
\begin{array}{cc}
n\in\N^+,\;A_i\in\mathscr{B}\big(\Gamma({\rm X})\big),
\;\varphi_i\in{\rm LIP}\big([0,1]\big),\\
\mbox{and }v_i\in{\rm TestV}({\rm X})\mbox{ for every }i=1,\ldots,n
\end{array}\right\}.
\]
Since ${\rm TestV}({\rm X})\subseteq L^\infty(T{\rm X})$ we  see that for any  $V\in\testvf$ the function $(\gamma,t)\mapsto|V_t|(\gamma)$ belongs to $L^\infty(\mathcal L_1\times\ppi)$.
\begin{proposition}[Continuity of the test vector fields along $\ppi$]\label{prop:test_curves_continuous}
For any $V,W\in\testvf$ we have that
\begin{equation}\label{eq:test_curves_continuous}
[0,1]\ni t\quad \to \quad \la V_t,W_t\ra\in L^1(\ppi)\quad\mbox{ is continuous.}
\end{equation}
In particular, the function $[\![V]\!]:[0,1]\to[0,+\infty)$ is continuous for every $V\in{\rm Test}\Gamma({\rm X})$.
\end{proposition}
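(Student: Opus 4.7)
The plan is to reduce to the elementary building blocks of $\testvf$ via bilinearity, then transfer the problem to the continuity statement of Theorem \ref{thm:cont_f_circ_e_t} through the pullback of the inner product.

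First I would use bilinearity of $\langle\cdot,\cdot\rangle$ to reduce the claim to the case where $V_t=\varphi(t)\,\nchi_A\,\e_t^*v$ and $W_t=\psi(t)\,\nchi_B\,\e_t^*w$ for some $\varphi,\psi\in{\rm LIP}([0,1])$, Borel sets $A,B\subseteq\Gamma(\X)$, and test vector fields $v,w\in\testv(\X)$. Then
\[
\langle V_t,W_t\rangle=\varphi(t)\psi(t)\,\nchi_{A\cap B}\,\langle\e_t^*v,\e_t^*w\rangle.
\]
The key identity to establish is that $\langle\e_t^*v,\e_t^*w\rangle=\langle v,w\rangle\circ\e_t$ holds $\ppi$-a.e. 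This follows from polarization together with the identity $|\e_t^*v|=|v|\circ\e_t$ (Theorem \ref{thm:pullback_L0}(i)) and the fact that on the infinitesimally Hilbertian space $\X$ the $L^2$-normed module $L^2(T\X)$, and hence its pullbacks, are Hilbert modules.

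Next I observe that since $v,w\in\testv(\X)\subseteq L^\infty\cap L^2(T\X)$, we have $\langle v,w\rangle\in L^1(\mm)$. Theorem \ref{thm:cont_f_circ_e_t} then tells us that $t\mapsto\langle v,w\rangle\circ\e_t$ is continuous in $L^1(\ppi)$. Combining this with the continuity of $t\mapsto\varphi(t)\psi(t)$ and the uniform bound $|\varphi\psi|(t)\le\|\varphi\|_\infty\|\psi\|_\infty$, a straightforward triangle-inequality estimate gives
\[
\|\varphi(t)\psi(t)\nchi_{A\cap B}(\langle v,w\rangle\circ\e_t)-\varphi(s)\psi(s)\nchi_{A\cap B}(\langle v,w\rangle\circ\e_s)\|_{L^1(\sppi)}\to 0
\]
as $t\to s$, proving \eqref{eq:test_curves_continuous} for elementary fields and thus, by bilinearity, for all $V,W\in\testvf$.

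For the final statement, I specialize $W=V$, obtaining that $t\mapsto\langle V_t,V_t\rangle\in L^1(\ppi)$ is continuous. Integration against $\ppi$ is a bounded linear functional on $L^1(\ppi)$, so $t\mapsto\int\langle V_t,V_t\rangle\,\d\ppi=[\![V]\!]_t^{\,2}$ is continuous, and taking square roots gives continuity of $[\![V]\!]$. The only subtle point in the whole argument is the inner-product pullback identity; everything else is routine, so I do not expect any serious obstacle.
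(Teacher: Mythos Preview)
Your proof is correct and follows essentially the same approach as the paper: reduce by bilinearity to elementary terms, use the pullback identity $\langle\e_t^*v,\e_t^*w\rangle=\langle v,w\rangle\circ\e_t$, and invoke Theorem~\ref{thm:cont_f_circ_e_t}. You are slightly more explicit in keeping track of the Lipschitz factors $\varphi,\psi$ and in justifying the polarization step and the final square-root argument, but the core argument is identical.
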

\begin{proof}
By linearity, it is clear that it is sufficient to prove the claim for $V,W$ of the form $V=\nchi_A\e_t^*v$, $W=\nchi_B\e_t^*w$ for $v,w\in {\rm TestV}({\rm X})$. In this case the claim \eqref{eq:test_curves_continuous} is a direct consequence of 
\[
 \la V_t,W_t\ra=\nchi_{A\cap B}\la v,w\ra\circ\e_t
\]
and Theorem \ref{thm:cont_f_circ_e_t}. The last statement follows by choosing $W=V$.
\end{proof}
We now define two norms on $\testvf$:
\[
\begin{split}
{\|V\|}^2_{\mathscr{L}^2(\sppi)}&:=\int_0^1{[\![V]\!]}_t^2\,\d t\\
{\|V\|}_{\mathscr{C}(\sppi)}&:=\max_{t\in[0,1]} {[\![V]\!]}_t.
\end{split}
\]
Notice that Proposition \ref{prop:test_curves_continuous} ensures that $t\mapsto \norm{V}_t$ is Borel, hence $\|\cdot\|_{\mathscr L^2(\sppi)}$ is well defined; also, routine computations show that ${\|\cdot\|}_{\mathscr{L}^2(\sppi)},{\|\cdot\|}_{\mathscr{C}(\sppi)}$ are  norms on $\testvf$ with ${\|\cdot\|}_{\mathscr{L}^2(\sppi)}\leq {\|\cdot\|}_{\mathscr{C}(\sppi)}$. 

\bigskip

We now want to show that $(\testvf,\|\cdot\|_{\mathscr C(\sppi)})$ is separable by exhibiting  a countable dense subset. To this aim, we first  choose
three countable families $\mathcal{F}_1\subseteq\big\{\text{open sets of }\Gamma({\rm X})\big\}$,
$\mathcal{F}_2\subseteq{\rm LIP}\big([0,1]\big)$ and $\mathcal{F}_3\subseteq\testv({\rm X})$ such that
\[
\begin{split}
&\text{given }A\subseteq\Gamma({\rm X})\text{ Borel and }\eps>0,\text{ there exists }U\in\mathcal{F}_1
\text{ with }\ppi(A\Delta U)<\eps,\\
&\mathcal{F}_2\text{  is dense in }
C([0,1])\text{ and stable by product and $\Q$-linear combinations},\\
&\mathcal{F}_3\text{ is a  }\mathbb{Q}\text{-vector space of functions in }W^{1,2}(\X)
\text{ whose gradients generate }L^2(T{\rm X}).
\end{split}
\]
We proceed in the following way:
\begin{itemize}
\item[${\mathcal{F}}_1$:] Since $\Gamma({\rm X})$ is separable, there exists a countable family $\widetilde{\mathcal{F}}_1$
of open subsets of $\Gamma({\rm X})$ that is a neighbourhood basis for each point $\gamma\in\Gamma({\rm X})$.
Let us denote by $\mathcal{F}_1$ the set of finite unions of elements of $\widetilde{\mathcal{F}}_1$,
so that $\mathcal{F}_1$ is countable. Fix $A\in\mathscr{B}\big(\Gamma(\rm X)\big)$ and $\eps>0$.
The measure $\ppi$ is regular, since $\big(\Gamma({\rm X}),\sfd_{\Gamma({\rm X})}\big)$
is complete and separable. By inner regularity of $\ppi$, there exists a compact subset
$K\subseteq A$ such that $\ppi(A\setminus K)<\eps/2$.
By outer regularity of $\ppi$, there exists $V\subseteq\Gamma({\rm X})$
open such that $K\subseteq V$ and $\ppi(V\setminus K)<\eps/2$.
We can then associate to any $\gamma\in K$ a set
$U_\gamma\in\widetilde{\mathcal{F}}_1$ such that $\gamma\in U_\gamma\subseteq V$. By compactness of $K$, one has
$K\subseteq U_{\gamma_1}\cup\ldots\cup U_{\gamma_n}\subseteq V$ for some finite choice
$\gamma_1,\ldots,\gamma_n\in K$. Let us call $U:=U_{\gamma_1}\cup\ldots\cup U_{\gamma_n}\in\mathcal{F}_1$.
We thus have that
$$\ppi(A\Delta U)=\ppi(A\setminus U)+\ppi(U\setminus A)\leq\ppi(A\setminus K)+\ppi(V\setminus K)<\eps.$$
\item[${\mathcal{F}}_2$:] By the separability of  $C([0,1])$ such $\mathcal F_2$ exists.
\item[${\mathcal{F}}_3$:] Since the space $(\rm X,\sfd,\mm)$ is infinitesimally Hilbertian,
we have that $W^{1,2}({\rm X})$ is reflexive and therefore, by \cite{ACM14}, separable. Thus let ${\mathcal{F}}_3$ be any countable dense $\Q$-vector subspace of $W^{1,2}(\X)$ and notice that since gradients of functions in $W^{1,2}(\X)$ generate $L^2(T\X)$, the same holds for functions in ${\mathcal{F}}_3$.
\end{itemize}
We now define the class of test vector fields $\testvfn$ as:
\[
\testvfn:=
\left\{t\mapsto\sum_{i=1}^n\psi_i(t)\,\nchi_{U_i}\,\e_t^*\nabla f_i\;\bigg|\;
\begin{array}{cc}
n\in\N^+\mbox{ and }U_i\in\mathcal{F}_1,\,\psi_i\in\mathcal{F}_2,\\
f_i\in\mathcal{F}_3\mbox{ for every }i=1,\ldots,n
\end{array}\right\}.
\]
Clearly $\testvfn$ is a countable subset of $\testvf$. Also, notice that the inequalities
\[
\begin{split}
\norm{\nchi_A\e_t^*v-\nchi_A\e_t^*w}_t&\leq \sqrt{{\sf C}(\ppi)}\|v-w\|_{L^2(T\X)},\\
\norm{\nchi_A\e_t^*v-\nchi_U\e_t^*v}_t&\leq \sqrt{\ppi(A\Delta U)}\|v\|_{L^\infty(T\X)},
\end{split}
\]
valid for any $t\in[0,1]$, $A,U\subset\Gamma(\X)$ Borel and $v,w\in L^2(T\X)$ and the very definition of pullback module, show that 
\begin{equation}
\label{eq:densevfn}
\text{for any $t\in[0,1]$ the set $\{W_t\,:\,W\in\testvfn\}$ is dense in $\e_t^*L^2(T\X)$.}
\end{equation}

\begin{lemma}[Separability of $\testvf$]
\label{lem:separability_TestG(X)}
The family $\testvfn$ is dense in $\testvf$ with respect
to the norm ${\|\cdot\|}_{\mathscr{C}(\sppi)}$ (and thus also w.r.t.\ ${\|\cdot\|}_{\mathscr{L}^2(\sppi)}$).
\end{lemma}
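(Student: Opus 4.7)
The plan is to approximate a general $V \in \testvf$ in $\|\cdot\|_{\mathscr{C}(\sppi)}$ by disentangling its ``time'', ``cylinder'' and ``vector'' components and approximating each by elements of the corresponding countable family. By linearity and the triangle inequality, it suffices to treat a single summand $V_t = \varphi(t)\chi_A\,\e_t^*v$ with $\varphi \in {\rm LIP}([0,1])$, $A \in \mathscr{B}(\Gamma(\X))$, $v \in {\rm TestV}(\X)$. Expanding $v = \sum_k h_k\nabla g_k$ with $h_k, g_k \in {\rm TestF}(\X)$ and using the pullback identity $\e_t^*(h_k\nabla g_k) = (h_k\circ\e_t)\,\e_t^*\nabla g_k$, a further reduction by linearity brings us to the case of a single summand
\[
V_t = \varphi(t)\chi_A(\gamma)h(\gamma_t)\,\e_t^*\nabla g,\qquad h,g\in{\rm TestF}(\X).
\]

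Choose $f \in \mathcal{F}_3$ close to $g$ in $W^{1,2}(\X)$; since ${\rm TestF}(\X)$ is dense in $W^{1,2}(\X)$ on $\RCD$ spaces, we may take $\mathcal{F}_3 \subseteq {\rm TestF}(\X)$, so that $\nabla f \in L^\infty(T\X)$. The incurred error in $\|\cdot\|_{\mathscr{C}(\sppi)}$ is bounded by $\|\varphi\|_\infty\|h\|_\infty\sqrt{\sfC(\sppi)}\|\nabla g - \nabla f\|_{L^2(T\X)}$, uniformly in $t$. The residual task is to approximate the bounded Borel function
\[
F(t,\gamma):=\varphi(t)\chi_A(\gamma)h(\gamma_t)\qquad\text{on }[0,1]\times\Gamma(\X)
\]
by a finite sum $\sum_j\psi_j(t)\chi_{U_j}(\gamma)$ with $\psi_j \in \mathcal{F}_2$, $U_j \in \mathcal{F}_1$, in the norm $\sup_t\|\cdot\|_{L^2(\sppi)}$; the $L^\infty$-bound on $|\nabla f|$ then converts this into the desired $\|\cdot\|_{\mathscr{C}(\sppi)}$-closeness of the resulting element of $\testvfn$.

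For this approximation, the key tool is that $[0,1]\ni t\mapsto h\circ\e_t \in L^2(\sppi)$ is continuous, hence uniformly continuous on the compact interval $[0,1]$: Theorem \ref{thm:cont_f_circ_e_t} provides this in $L^1(\sppi)$ and the $L^\infty$-bound on $h$ upgrades it to $L^2(\sppi)$. Given $\eps > 0$, partition $[0,1]$ into sufficiently fine intervals $J_1,\ldots,J_M$ with reference points $t_m \in J_m$ and pick a Lipschitz partition of unity $\{\eta_m\}_{m=1}^M \subset {\rm LIP}([0,1])$ subordinated to a slight enlargement of this partition. Using $\sum_m\eta_m\equiv 1$ one gets $F(t,\gamma) \approx \sum_m\eta_m(t)\varphi(t)\chi_A(\gamma)h(\gamma_{t_m})$, with error small in $\sup_t\|\cdot\|_{L^2(\sppi)}$. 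For each $m$, the bounded Borel function $\gamma\mapsto\chi_A(\gamma)h(\gamma_{t_m})$ is approximated in $L^2(\sppi)$ by a simple function $\sum_l c_{m,l}\chi_{B_{m,l}}$, and each $B_{m,l}$ in turn by some $U_{m,l}\in\mathcal{F}_1$ via the defining property of $\mathcal{F}_1$. Finally the resulting Lipschitz time coefficients $c_{m,l}\eta_m\varphi$ are approximated in $C([0,1])$ by $\psi_{m,l}\in\mathcal{F}_2$, absorbing the scalars $c_{m,l}$ by rational approximation and using that $\mathcal{F}_2$ is a $\Q$-vector space stable under products.

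The main obstacle is precisely the entanglement between the time and curve variables through the pullback $\e_t^*$: while \eqref{eq:densevfn} yields the pointwise-in-$t$ denseness of $\{W_t : W \in \testvfn\}$ in $\e_t^*L^2(T\X)$ almost for free, the uniform-in-$t$ density demanded by $\|\cdot\|_{\mathscr{C}(\sppi)}$ forces us to treat $\{h\circ\e_t\}_{t\in[0,1]}$ as a continuous trajectory in $L^2(\sppi)$ and to bridge its $t$-variation by the partition-of-unity gluing above, which is the one nontrivial ingredient beyond the pointwise statement.
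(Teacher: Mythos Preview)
Your argument is correct but follows a genuinely different route from the paper's. The paper does not unpack the structure of a test vector field at all: it invokes the pointwise density \eqref{eq:densevfn} to find, for each $t_0\in[0,1]$, some $W\in\testvfn$ with $\norm{V-W}_{t_0}<\eps$; then uses the continuity of $t\mapsto\norm{V-W}_t$ (Proposition~\ref{prop:test_curves_continuous}) to upgrade this to a neighbourhood of $t_0$; and finally glues the local approximants via compactness of $[0,1]$ and a Lipschitz partition of unity, approximating the latter by functions in $\mathcal{F}_2$. Your proof instead decomposes $V$ into its elementary constituents $\varphi(t)\chi_A(h\circ\e_t)\,\e_t^*\nabla g$, replaces $g$ by $f\in\mathcal{F}_3$, and then reduces the problem to a scalar approximation of $F(t,\gamma)=\varphi(t)\chi_A(\gamma)h(\gamma_t)$ in $\sup_t\|\cdot\|_{L^2(\sppi)}$, handled via the uniform continuity of $t\mapsto h\circ\e_t$ and a partition-of-unity freezing argument. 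Both approaches ultimately hinge on the same compactness/partition-of-unity mechanism, but the paper treats \eqref{eq:densevfn} and Proposition~\ref{prop:test_curves_continuous} as black boxes while you reprove the relevant pieces in a more explicit, coordinate-wise fashion. One minor cost of your route is the need to take $\mathcal{F}_3\subseteq{\rm TestF}(\X)$ so that $|\nabla f|\in L^\infty$; the paper's argument works for any countable dense $\mathcal{F}_3\subset W^{1,2}(\X)$, since the $L^\infty$ bound on the vector part is never used---the continuity of $t\mapsto\norm{V-W}_t$ comes for free from Proposition~\ref{prop:test_curves_continuous} without separating the scalar and vector factors.
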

\begin{proof} Let $V\in\testvf$ be arbitrary, $\eps>0$ and $t_0\in[0,1]$. By \eqref{eq:densevfn}  there is  $W\in \testvfn$ such that $\norm{V-W}_{t_0}<\eps$. Since 
$t\mapsto \norm{V-W}_t^2=\norm{V}^2_t+\norm{W}^2_t-2\int\la V_t,W_t\ra\,\d\ppi$ is continuous, we see that $\norm{V-W}_{t}<\eps$ for 
every $t$ in a neighbourhood of $t_0$. By compactness of $[0,1]$ we can then find a finite number of open intervals $I_1,\ldots,I_n$ covering $[0,1]$ and elements $W_1,\ldots,W_n\in \testvfn$ such that
\begin{equation}
\label{eq:b1}
\norm{V-W_i}_t< \eps\qquad\forall t\in I_i\cap[0,1],\ i=1,\ldots,n.
\end{equation}
By multiplying $W_i$ by an appropriate function in ${\mathcal{F}}_2$ we can also assume that
\begin{equation}
\label{eq:b2}
\norm{W_i}_t< \|V\|_{\mathscr C(\sppi)}+2\eps\qquad\forall t\in[0,1].
\end{equation}
Now let $(\phi_i)$ be a Lipschitz partition of the unity subordinate to the cover made with the $I_i$'s and for any $i$ let $\psi_i\in {\mathcal{F}}_2$ be such that $|\phi_i(t)-\psi_i(t)|<\eps$ for every $t\in[0,1]$. Then we have $W_t:=\sum_i\psi_i(t)W_{i,t}\in \testvfn$ and
\[
\norm{V-W}_t\leq\norm{V-\sum_i\phi_i(t)W_i}_t+\norm{\sum_i(\psi_i(t)-\phi_i(t))W_i}_t \stackrel{\eqref{eq:b1},\eqref{eq:b2}}\leq \eps+\eps( \|V\|_{\mathscr C(\sppi)}+2\eps)
\]
for any $t\in[0,1]$. The conclusion follows by the arbitrariness of $\eps>0$.
\end{proof}
\subsection{The space \texorpdfstring{$\mathscr{L}^2(\ppi)$}{L2(pi)}}

We start defining  the class of Borel vector fields along $\ppi$:
\begin{definition}[Borel vector fields along $\ppi$]\label{def:borelvf}
We say that  $V\in \vf$ is \emph{Borel} provided
\begin{equation}\label{convect_deriv_f9}
[0,1]\ni t\longmapsto\int\la V_t,W_t\ra\,\d\ppi\;\mbox{ is a Borel function,}
\end{equation}
for every $W\in\testvfn$.
\end{definition}
Notice that thanks to Lemma \ref{lem:separability_TestG(X)} this notion would be unaltered if we require \eqref{convect_deriv_f9} to hold for any $W\in\testvf$. Also, Proposition \ref{prop:test_curves_continuous} ensures that  test vector fields are Borel.
%
%
%
%
We have the following basic result:
\begin{proposition}\label{prop:ptwse_norm_Borel}
Let $V\in\vf$ be Borel. Then the map $[\![V]\!]:[0,1]\to[0,\infty)$ is Borel.
\end{proposition}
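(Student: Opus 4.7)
The idea is to realize $[\![V]\!]_t^2$ as a countable supremum of Borel functions of $t$. The key ingredients are (a) density \eqref{eq:densevfn} of $\{W_t : W\in\testvfn\}$ in $\e_t^*L^2(T\X)$ for each fixed $t$, (b) continuity of $[\![W]\!]$ on test vector fields (last statement of Proposition \ref{prop:test_curves_continuous}), and (c) the fact that, since $\X$ is $\RCD$ hence infinitesimally Hilbertian, each pullback $\e_t^*L^2(T\X)$ is a Hilbert module, so Cauchy--Schwarz is at our disposal.

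First I would establish the Hilbert-space identity
\[
[\![V]\!]_t^2 \;=\; \sup_{W\in\testvfn}\Big(\,2\int\langle V_t,W_t\rangle\,\d\ppi \;-\; [\![W]\!]_t^2\,\Big).
\]
The inequality $\geq$ holds pointwise in $t$: by Cauchy--Schwarz (pointwise $\ppi$-a.e.\ and then integrated) $2\int\langle V_t,W_t\rangle\,\d\ppi\leq 2[\![V]\!]_t[\![W]\!]_t\leq[\![V]\!]_t^2+[\![W]\!]_t^2$. For the opposite inequality, use \eqref{eq:densevfn} to pick $W^{(n)}\in\testvfn$ with $W^{(n)}_t\to V_t$ in $\e_t^*L^2(T\X)$; then $[\![W^{(n)}]\!]_t\to[\![V]\!]_t$ and $\int\langle V_t,W^{(n)}_t\rangle\,\d\ppi\to[\![V]\!]_t^2$, so the argument of the supremum converges to $[\![V]\!]_t^2$.

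Next, for every fixed $W\in\testvfn$, the function
\[
g_W(t)\;:=\;2\int\langle V_t,W_t\rangle\,\d\ppi \;-\; [\![W]\!]_t^2
\]
is Borel in $t$: the first summand is Borel by the very definition of Borel vector field (Definition \ref{def:borelvf}, together with the remark that it suffices to test against $W\in\testvfn$), and the second is even continuous by Proposition \ref{prop:test_curves_continuous}. Since $\testvfn$ is countable, $t\mapsto\sup_{W\in\testvfn}g_W(t)$ is Borel; by the identity above it coincides with $[\![V]\!]_t^2$, and taking square roots (which is legal because the sup is $\geq g_0(t)=0$, using that the zero vector field lies in $\testvfn$) yields that $[\![V]\!]$ is Borel.

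I do not expect a real obstacle here: the argument is a countable-duality trick in which every required property (density, continuity of norms on test fields, countability, Hilbert-module Cauchy--Schwarz) is delivered by the preceding subsections. The only mild point of care is to work with $[\![V]\!]_t^2$ rather than $[\![V]\!]_t$ directly, so as to avoid dividing by $[\![W]\!]_t$ and having to separately treat the vanishing locus of the denominator.
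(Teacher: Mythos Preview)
Your proof is correct and follows exactly the paper's approach: express $[\![V]\!]_t^2$ as the countable supremum over $W\in\testvfn$ of $2\int\la V_t,W_t\ra\,\d\ppi-[\![W]\!]_t^2$, which is Borel term-by-term. You have simply supplied the justifications (Cauchy--Schwarz for $\geq$, density \eqref{eq:densevfn} for $\leq$, and Borel/continuity of each summand) that the paper leaves implicit.
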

\begin{proof} From \eqref{eq:densevfn} we deduce that
$${[\![V]\!]}_t^2=\sup_{W\in{\rm Test}\Gamma_\N({\rm X})}
\left(2\int\la V_t,W_t\ra\,\d\ppi-{[\![W]\!]}_t^2\right)
\quad\mbox{ for every }t\in[0,1].$$
and the thesis follows.
\end{proof}
We can now define the space $\mathscr L^2(\ppi)$:
\begin{definition}[The space $\mathscr{L}^2(\ppi)$]
The space $\mathscr L^2(\ppi)$ is the space of all Borel vector fields $V\in\vf$ such that
\[
{\|V\|}^2_{\mathscr{L}^2(\sppi)}:=\int_0^1{[\![V]\!]}_t^2\,\d t=
\int_0^1\!\!\!\int{|V_t|}^2\,\d\ppi\,\d t<+\infty,
\]
where we identify $V,\tilde V\in\vf$ if $V_t=\tilde V_t$ for a.e.\ $t\in[0,1]$.
\end{definition}
Clearly $\big(\mathscr{L}^2(\ppi),{\|\cdot\|}_{\mathscr{L}^2(\sppi)}\big)$ is
a normed space, wherein $\testvf$ is embedded. Adapting the classical arguments concerning the standard $L^2$ spaces we have the following:
\begin{proposition}[Basic properties of  $\mathscr{L}^2(\ppi)$]\label{prop:L2(pi)_Banach}
The space $\mathscr{L}^2(\ppi)$ is a Hilbert space and if $V_n\to V$ in $\mathscr L^2(\ppi)$ then there is a subsequence such that $V_{n,t}\to V_t$ in $\e_t^*L^2(T\X)$ for a.e.\ $t\in[0,1]$.
\end{proposition}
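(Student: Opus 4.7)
The plan is to adapt the classical completeness proof of $L^2$ to the present setting, where vector fields along $\ppi$ take values in the time-dependent family of modules $\e_t^*L^2(T\X)$. Since $(\X,\sfd,\mm)$ is $\RCD$, hence infinitesimally Hilbertian, each fibre $\e_t^*L^2(T\X)$ is a Hilbert module; the pointwise inner product $\la V_t,W_t\ra\in L^1(\ppi)$ thus makes sense for any $V,W\in\mathscr{L}^2(\ppi)$, and by polarization of Proposition \ref{prop:ptwse_norm_Borel} applied to $V\pm W$ the map $t\mapsto\int\la V_t,W_t\ra\,\d\ppi$ is Borel. Integrating it in $t$ yields a symmetric bilinear form on $\mathscr{L}^2(\ppi)$ inducing $\|\cdot\|_{\mathscr{L}^2(\sppi)}$, and since the parallelogram identity holds fibrewise and passes to the integral, $\mathscr{L}^2(\ppi)$ is pre-Hilbert.

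For completeness and the a.e.\ fibrewise convergence, I would extract from any Cauchy sequence $(V_n)\subset\mathscr{L}^2(\ppi)$ a subsequence $(V_{n_k})$ with $\|V_{n_{k+1}}-V_{n_k}\|_{\mathscr{L}^2(\sppi)}\leq 2^{-k}$. By Proposition \ref{prop:ptwse_norm_Borel} each $t\mapsto\norm{V_{n_{k+1}}-V_{n_k}}_t$ is Borel, hence so is the series $S(t):=\sum_k\norm{V_{n_{k+1}}-V_{n_k}}_t$; Minkowski's inequality in $L^2(0,1)$ gives $\|S\|_{L^2(0,1)}\leq\sum_k 2^{-k}<+\infty$, so the Borel set $G:=\{S<+\infty\}\subseteq[0,1]$ has full Lebesgue measure. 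For $t\in G$ the sequence $(V_{n_k,t})_k$ is Cauchy in the complete Hilbert module $\e_t^*L^2(T\X)$, hence admits a limit $V_t\in\e_t^*L^2(T\X)$; I would set $V_t:=0$ for $t\notin G$. This defines a candidate $V\in\vf$ with $V_{n_k,t}\to V_t$ in $\e_t^*L^2(T\X)$ for a.e.\ $t$, matching the requested subsequential a.e.\ convergence.

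The delicate step, and in my view the main obstacle, is to check that $V$ is Borel in the sense of Definition \ref{def:borelvf} — i.e.\ that the Borel-measurability of the scalar fields $t\mapsto\int\la \cdot,W_t\ra\,\d\ppi$ survives the fibrewise completion procedure above. For any $W\in\testvfn$, each $f_k(t):=\int\la V_{n_k,t},W_t\ra\,\d\ppi$ is Borel by hypothesis on $V_{n_k}$, and Cauchy--Schwarz in the fibre gives $f_k(t)\to\int\la V_t,W_t\ra\,\d\ppi$ for every $t\in G$, while by construction the integral is $0$ off $G$. Hence $t\mapsto\int\la V_t,W_t\ra\,\d\ppi$ coincides with $\nchi_G\cdot\limsup_k f_k$, which is Borel, proving $V\in\mathscr{L}^2(\ppi)$. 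To conclude, Fatou's lemma applied to $\int_0^1\norm{V_{n_m}-V_{n_k}}_t^2\,\d t$ as $m\to\infty$ shows $\|V-V_{n_k}\|_{\mathscr{L}^2(\sppi)}\to 0$, and combining this with the Cauchy property of $(V_n)$ yields convergence of the whole original sequence to $V$.
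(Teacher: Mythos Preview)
Your argument is correct and follows essentially the same route as the paper: both extract a rapidly converging (sub)sequence, use the summability of $t\mapsto\sum_k\norm{V_{n_{k+1}}-V_{n_k}}_t$ in $L^2(0,1)$ to get a full-measure set on which fibrewise limits exist, define $V$ as that limit (and $0$ elsewhere), verify Borel regularity by passing to the limit in $\int\la\cdot,W_t\ra\,\d\ppi$ for test $W$, and conclude $\mathscr{L}^2(\ppi)$-convergence. The only cosmetic differences are that the paper uses dominated convergence at the last step where you invoke Fatou, and it bounds $\norm{V}_t$ directly rather than deducing $V\in\mathscr{L}^2(\ppi)$ from $\|V-V_{n_k}\|_{\mathscr{L}^2(\sppi)}\to 0$.
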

\begin{proof} It is clear that the $\mathscr L^2(\ppi)$ norm comes from the scalar product
\[
\la V,W\ra_{\mathscr L^2(\sppi)}:=\int_0^1\int \la V_t,W_t\ra\,\d\ppi\,\d t.
\]
To conclude the proof we shall show that if $(V_n)$ is a sequence of Borel vector fields in $\mathscr L^2(\ppi)$ such that $\sum_n\|V_{n+1}-V_n\|_{\mathscr L^2(\sppi)}<\infty$, then such sequence has a limit $V\in\mathscr L^2(\ppi)$ and for a.e.\ $t$ it holds $V_{n,t}\to V_t$ in $\e_t^*L^2(T\X)$.

Define the Borel function $g:[0,1]\to[0,+\infty]$ as $g:=\sum_n[\![V_{n+1}-V_n]\!]$ and notice that since 
\[
\Big\|\sum_{n=1}^N[\![V_{n+1}-V_n]\!]\Big\|_{L^2(0,1)}\leq\sum_{n=1}^N\|V_{n+1}-V_n\|_{\mathscr L^2(\sppi)}\leq\sum_{n=1}^\infty\|V_{n+1}-V_n\|_{\mathscr L^2(\sppi)}<\infty\quad\forall N\in\N,
\]
we have that $g\in L^2(0,1)$. Let $N:=\{t\,:\,g(t)=+\infty\}$ and notice that for every $t\in[0,1]\setminus N$ we have
\begin{equation}
\label{eq:boundV}
\sum_{n=1}^\infty\|V_{n+1,t}-V_{n,t}\|_{\e_t^*L^2(T\X)}=\sum_{n=1}^\infty\norm{V_{n+1}-V_n}_t=g(t)<\infty,
\end{equation}
proving that $(V_{n,t})$ is a Cauchy sequence in $\e_t^*L^2(T\X)$. Then define
$$V_t:=\left\{\begin{array}{ll}
\lim_n V_{n,t}\in\e_t^*L^2(T{\rm X})\\
0\in\e_t^*L^2(T{\rm X})
\end{array}\quad\begin{array}{ll}
\text{ if }t\in[0,1]\setminus N.\\
\text{ if }t\in N.
\end{array}\right.
$$
Notice that for every $W\in\testvf$ we have $\int\la V_t,W_t\ra\,\d\ppi=\lim_n\int\la V_{n,t},W_t\ra\,\d\ppi$ for all $t\in[0,1]\setminus N$, hence the map $[0,1]\ni t\mapsto\int\la V_t,W_t\ra\,\d\ppi$ is Borel and, by arbitrariness of $W$, this shows that $V$ is Borel. Since trivially we have $\norm {V}_t\leq \norm {V_1}_t+ \sum_{n=1}^\infty\norm{V_{n+1}-V_n}_t$, by \eqref{eq:boundV} we see that $V\in\mathscr L^2(\ppi)$. Now to check that $V_n\to V$ in $\mathscr L^2(\ppi)$ notice that, again by \eqref{eq:boundV}, the sequence $\norm{V-V_n}_t$ is dominated in $L^2(0,1)$ and that for every $t\in[0,1]\setminus N$ we have
\[
\lim_{n\to\infty}\norm{V-V_n}_t\leq \lim_{n\to\infty} \lim_{m\to\infty} \sum_{i=n}^{m}\norm{V_{i+1}-V_i}_t\stackrel{\eqref{eq:boundV}}=0,
\]
so that the conclusion follows by the dominate convergence theorem.
\end{proof}
\begin{proposition}[Density of $\testvfn$ in $\mathscr{L}^2(\ppi)$]
\label{prop:TestG(X)_dense_in_L2}
The space $\testvfn$ is dense in $\mathscr{L}^2(\ppi)$. In particular, $\mathscr L^2(\ppi)$ is separable.
\end{proposition}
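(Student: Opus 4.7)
The plan is to show that the orthogonal complement of $\testvfn$ in the Hilbert space $\mathscr L^2(\ppi)$ (Proposition \ref{prop:L2(pi)_Banach}) is trivial. Since $\testvfn$ is countable by construction, separability of $\mathscr L^2(\ppi)$ will follow from density for free.

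So I take $V \in \mathscr L^2(\ppi)$ with $\la V,W\ra_{\mathscr L^2(\sppi)}=0$ for every $W\in\testvfn$. Unfolding the definition, for each $\psi\in\mathcal F_2$, $U\in\mathcal F_1$, $f\in\mathcal F_3$ I obtain
\[
\int_0^1 \psi(t)\,\phi_{U,f}(t)\,\d t = 0, \qquad \phi_{U,f}(t) := \int_U \la V_t, \e_t^*\nabla f\ra\,\d\ppi.
\]
The map $\phi_{U,f}$ is Borel by the Borel property of $V$ applied to the test element $t\mapsto \nchi_U\,\e_t^*\nabla f\in\testvfn$ (Definition \ref{def:borelvf}), and lies in $L^2(0,1)$ thanks to Cauchy--Schwarz together with the compression estimate $\|\e_t^*\nabla f\|_{L^2(\sppi)}^2\leq\sfC(\ppi)\|\nabla f\|_{L^2(T\X)}^2$. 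The density of $\mathcal F_2$ in $C([0,1])$, hence in $L^2(0,1)$, then forces $\phi_{U,f}=0$ $\mathcal L^1$-a.e.\ on $[0,1]$.

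To upgrade this to $V=0$ I form, for each $f\in\mathcal F_3$, the null set $N_f:=\bigcup_{U\in\mathcal F_1}\{t:\phi_{U,f}(t)\neq 0\}$ and then $N:=\bigcup_{f\in\mathcal F_3}N_f$, still null because both unions are countable. For $t\notin N$, the $L^1(\ppi)$ function $\gamma\mapsto\la V_t,\e_t^*\nabla f\ra(\gamma)$ integrates to zero over every $U\in\mathcal F_1$; by the defining approximation property of $\mathcal F_1$ and absolute continuity of the $L^1$-integral this extends to every Borel $A\subseteq\Gamma(\X)$, whence $\la V_t,\e_t^*\nabla f\ra=0$ $\ppi$-a.e.\ for every $f\in\mathcal F_3$. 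A Cauchy--Schwarz approximation (namely $\|\la V_t,\e_t^*(\nabla f_n-v)\ra\|_{L^1(\sppi)}\leq\|V_t\|_{L^2(\sppi)}\sqrt{\sfC(\ppi)}\,\|\nabla f_n-v\|_{L^2(T\X)}$) and the density of $\{\nabla f:f\in\mathcal F_3\}$ in $L^2(T\X)$ promote this to $\la V_t,\e_t^* v\ra=0$ $\ppi$-a.e.\ for every $v\in L^2(T\X)$. Finally, property (ii) of Theorem \ref{thm:pullback_L0} says that the elements $\e_t^* v$ generate $\e_t^*L^2(T\X)$ as $L^0(\ppi)$-module, so $L^0$-linearity of the scalar product forces $\la V_t,W\ra=0$ $\ppi$-a.e.\ for every $W\in\e_t^*L^2(T\X)$; choosing $W=V_t$ yields $|V_t|=0$ $\ppi$-a.e., i.e.\ $V=0$ in $\mathscr L^2(\ppi)$. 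The only step requiring a little care is the absolute-continuity argument passing from integration over $U\in\mathcal F_1$ to integration over arbitrary Borel sets, but this is routine given the $L^1$-integrability of $\la V_t,\e_t^*\nabla f\ra$.
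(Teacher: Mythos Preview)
Your orthogonal-complement strategy is sound and genuinely different from the paper's proof, but there is one incorrect claim that needs repair. You assert ``the density of $\{\nabla f:f\in\mathcal F_3\}$ in $L^2(T\X)$''. This is false: by construction $\mathcal F_3$ has the property that its gradients \emph{generate} $L^2(T\X)$ as an $L^\infty(\mm)$-module, not that they are $L^2$-dense as a set (think of $\R^n$, where gradients are curl-free and cannot approximate a generic vector field). Fortunately this step is superfluous. Once you have $\la V_t,\e_t^*\nabla f\ra=0$ $\ppi$-a.e.\ for every $f\in\mathcal F_3$, use directly that the family $\{\e_t^*\nabla f:f\in\mathcal F_3\}$ generates $\e_t^*L^2(T\X)$ as an $L^0(\ppi)$-module (pullback of a generating set is generating); then $L^0$-linearity of the pairing and a Cauchy--Schwarz limit give $\la V_t,W\ra=0$ $\ppi$-a.e.\ for all $W\in\e_t^*L^2(T\X)$, and you finish as written. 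So your two-step passage (first to all $v\in L^2(T\X)$, then to the module) collapses to a single module-generation step.

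On the comparison: the paper argues constructively. Given $V$ and $\eps>0$, it uses the fibrewise density \eqref{eq:densevfn} to cover $[0,1]$ by Borel sets $G_k$ on which some enumerated $Z_k\in\testvfn$ is $\eps$-close to $V$, patches these together as $W_\infty=\sum_k\nchi_{G_k}Z_k$, truncates to a finite sum $W_m$, and finally approximates each $\nchi_{G_k}$ by Lipschitz cutoffs to land back in $\testvf$ (hence in the closure of $\testvfn$ via Lemma~\ref{lem:separability_TestG(X)}). Your argument instead exploits the Hilbert structure of $\mathscr L^2(\ppi)$ (Proposition~\ref{prop:L2(pi)_Banach}) to reduce density to triviality of the orthogonal complement, then peels off the tensor-like structure of $\testvfn$ factor by factor: first the $\mathcal F_2$-variable (density in $L^2(0,1)$), then the $\mathcal F_1$-variable (approximation of Borel sets), then the $\mathcal F_3$-variable (module generation). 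Your route is cleaner and avoids the explicit patching construction, at the cost of being non-constructive; the paper's route has the mild advantage of producing an explicit approximant.
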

\begin{proof} 
Let $(Z_k)$ be an enumeration of the elements in $\testvfn$, pick a Borel vector field $V\in\mathscr L^2(\ppi)$ and choose $\eps>0$. Then for every $k\in\N$ let $\widetilde{G}_k:=\big\{t\in[0,1]\,:\,{[\![V-Z_k]\!]}_t<\eps\big\}$ and put $G_1:=\tilde G_1$ and
$G_k:=\widetilde{G}_k\setminus(\widetilde{G}_1\cup\ldots\cup\widetilde{G}_{k-1})$ for $k>1$. Then \eqref{eq:densevfn} grants that
${(G_k)}_{k\geq 1}$ is a Borel partition of $[0,1]$.

Now for $m\in\N\cup\{\infty\}$  define $W_m\in\mathscr{L}^2(\ppi)$ as $W_{m,t}:=\sum_{k=1}^m\nchi_{G_k}(t)\,Z_{k,t}$. Observe that
${\|W_\infty-V\|}_{\mathscr{L}^2(\sppi)}<\eps$ by definition of $G_k$.
Moreover, for each $m\geq 1$ one has that
$${\|W_m-W_\infty\|}_{\mathscr{L}^2(\sppi)}^2
=\sum_{k=m+1}^\infty\int_{G_k}{[\![Z_k]\!]}_t^2\,\d t\leq
\int_{\bigcup_{k>m}G_k}2\,\big({[\![V]\!]}^2_t+\eps^2\big)\,\d t,$$
so that accordingly $\lim_{m\to\infty}{\|W_m-W_\infty\|}_{\mathscr{L}^2(\sppi)}=0$ by the dominated convergence theorem.

Hence to conclude it is sufficient to show that each $W_m$ belongs to the $\mathscr L^2(\ppi)$-closure of $\testvfn$ and in turn this will follow if we prove that for $Z\in\testvfn$ and $G\subset[0,1]$ Borel the vector field $\nchi_GZ$ belongs to the $\mathscr L^2(\ppi)$-closure of $\testvfn$. To see this, simply let $(\varphi_n)\subset {\rm LIP}([0,1])$ be uniformly bounded and a.e.\ converging to $\nchi_G$, notice that $\varphi_nZ\in\testvf$ and that an application of the dominate convergence theorem shows that $\varphi_nZ\to \nchi_GZ$ in $\mathscr L^2(\ppi)$.
\end{proof}

Now consider the speed $\ppi'_t$, associated to any test plan $\ppi$
by Theorem \ref{thm:speed_test_plan}. 
\begin{proposition}\label{prop:pi'_in_L2(pi)}
The (equivalence class up to a.e.\ equality of the) map $t\mapsto\ppi'_t$ is an element of the space $\mathscr{L}^2(\ppi)$.
\end{proposition}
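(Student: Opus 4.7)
The plan is to verify the two defining conditions for membership in $\mathscr{L}^2(\ppi)$: that the (equivalence class of the) map $t\mapsto\ppi'_t$ admits a Borel representative in the sense of Definition \ref{def:borelvf}, and that the associated squared norm is integrable in $t$.

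For the Borel condition, I would fix an arbitrary $W\in\testvfn$ of the form $W_t=\sum_{i=1}^n\psi_i(t)\,\nchi_{U_i}\,\e_t^*\nabla f_i$ with $\psi_i\in\mathcal F_2\subset{\rm LIP}([0,1])$, $U_i\in\mathcal F_1$ and $f_i\in\mathcal F_3\subset W^{1,2}(\X)$, and expand by bilinearity and the defining property of the pullback:
\[
\int\la\ppi'_t,W_t\ra\,\d\ppi=\sum_{i=1}^n\psi_i(t)\int_{U_i}(\e_t^*\d f_i)(\ppi'_t)\,\d\ppi\qquad\text{for a.e.\ }t\in[0,1].
\]
By Proposition \ref{prop:perborelsp} each $t\mapsto(\e_t^*\d f_i)(\ppi'_t)\in L^1(\ppi)$ is a.e.\ equal to a Borel map; integrating against the bounded function $\nchi_{U_i}$, multiplying by the continuous $\psi_i$, and taking a finite sum preserves this property up to a null set. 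Redefining $\ppi'_t$ as $0\in\e_t^*L^2(T\X)$ on the (null) exceptional set of $t$'s yields a representative that is Borel in the sense of Definition \ref{def:borelvf}.

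For the norm estimate I would invoke Theorem \ref{thm:speed_test_plan}, which gives $|\ppi'_t|(\gamma)=|\dot\gamma_t|$ for $\mathcal L^1$-a.e.\ $t$ and $\ppi$-a.e.\ $\gamma\in AC([0,1],\X)$. Since $\ppi$ is a test plan, the $2$-kinetic energy $\iint_0^1|\dot\gamma_t|^2\,\d t\,\d\ppi(\gamma)$ is finite, so by Fubini
\[
\int_0^1\norm{\ppi'_t}_t^2\,\d t=\int_0^1\!\!\int|\ppi'_t|^2\,\d\ppi\,\d t=\int\!\!\int_0^1|\dot\gamma_t|^2\,\d t\,\d\ppi(\gamma)<\infty,
\]
which concludes the verification.

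I do not expect a substantial obstacle here: the real work is already contained in Proposition \ref{prop:perborelsp} (which provides the Borel representative for the $L^1(\ppi)$-valued map $t\mapsto(\e_t^*\d f)(\ppi'_t)$) and in Theorem \ref{thm:speed_test_plan} (which defines $\ppi'_t$ and identifies its pointwise norm with the metric speed). The only mild subtlety worth flagging is that since $\mathscr L^2(\ppi)$ is defined modulo $\mathcal L^1$-a.e.\ equality, it suffices to produce some representative satisfying Definition \ref{def:borelvf}, so the ``a.e.\ Borel'' conclusion of Proposition \ref{prop:perborelsp} is exactly what is needed.
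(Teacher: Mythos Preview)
Your proposal is correct and follows essentially the same approach as the paper: both verify the $L^2$-integrability via the identification $|\ppi'_t|(\gamma)=|\dot\gamma_t|$ from Theorem~\ref{thm:speed_test_plan} together with the finite kinetic energy of a test plan, and both obtain the Borel representative by invoking Proposition~\ref{prop:perborelsp} for each $f\in\mathcal F_3$, then passing to the building blocks $\psi(t)\nchi_U\la\e_t^*\nabla f,\ppi'_t\ra$ of $\testvfn$ and redefining $\ppi'_t$ to be zero on a single exceptional null set (the countability of $\testvfn$ being what makes this last step work).
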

\begin{proof}
We have $\ppi'_t\in\e_t^*L^2(T{\rm X})$ for a.e.\ $t\in[0,1]$ and
$$\int_0^1\!\!\int{|\ppi'_t|}^2\,\d\ppi\,\d t\overset{\eqref{eq:speed_test_plan_2}}{=}
\int_0^1\!\!\int|\dot{\gamma}_t|^2\,\d\ppi(\gamma)\,\d t<+\infty$$
by the very definition of test plan. Hence we need only to show that $t\mapsto\ppi'_t$ has a Borel representative in the sense of Definition \ref{def:borelvf}.

Notice that for any $f\in W^{1,2}(\X)$ by Proposition \ref{prop:perborelsp} we have that the map $t\mapsto (\e_t^*\d f)(\ppi'_t)$ has a Borel representative. Hence the same holds for $t\mapsto \psi(t)\nchi_U\la\e_t^*\nabla f,\ppi'_t\ra$ for every $\psi\in{\rm LIP}([0,1])$ and $U\subset\Gamma(\X)$ Borel. Therefore there exists a Borel negligible set $N\subset[0,1]$ such that for every $V\in\testvfn$ the map $t\mapsto \int\la V_t,\ppi'_t\ra\,\d\ppi $, set to 0 on $N$, is Borel. This is sufficient to conclude.
\end{proof}
We conclude the section by pointing out that $\mathscr L^2(\ppi)$ can also be seen as the pullback of $L^2(T\X)$ via the evaluation map $\e:\Gamma(\X)\times[0,1]\to\X$ defined as $\e(\gamma,t):=\gamma_t$. To this aim, let us start by defining the following operations:
\begin{itemize}
\item[$\rm (i)$] Given $f\in L^\infty(\ppi\times\mathcal{L}_1)$ and $V\in\mathscr{L}^2(\ppi)$,
we define $fV\in\mathscr{L}^2(\ppi)$ as
\begin{equation}\label{eq:mult_L2(pi)}
(fV)_t:=f(\cdot,t)V_t\quad\mbox{ for }\mathcal{L}^1\mbox{-a.e.\ }t\in[0,1].
\end{equation}
\item[$\rm (ii)$] To each $V\in\mathscr{L}^2(\ppi)$ we associate the map $|V|\in L^2(\ppi\times\mathcal{L}_1)$,
defined by
\[
|V|(\gamma,t):=|V_t|(\gamma)\quad\mbox{ for }(\ppi\times\mathcal{L}_1)\mbox{-a.e.\ }(\gamma,t)\in\Gamma({\rm X})\times[0,1].
\]
\end{itemize}
It is clear that these operations give  $\mathscr{L}^2(\ppi)$ the structure of an $L^2(\ppi\times\mathcal{L}_1)$-normed module.

We then define the linear continuous operator $\Phi:L^2(T\X)\to \mathscr{L}^2(\ppi)$ by putting 
\[
{\Phi(v)}_t:=\e_t^*v,\qquad\text{for $\mathcal{L}_1$-a.e.\ $t\in[0,1]$.}
\]
We then have:
\begin{proposition}[$\mathscr L^2(\ppi)$ as pullback]\label{prop:l2pb}
We have $\big(\mathscr{L}^2(\ppi),\Phi\big)\sim\big(\e^*L^2(T{\rm X}),\e^*\big)$, i.e.:
\begin{equation}\label{eq:L2(pi)_is_pullback}\begin{split}
\big|\Phi(v)\big|=|v|\circ\e\,&\quad\mbox{ holds }(\ppi\times\mathcal{L}_1)\mbox{-a.e., for any }v\in L^2(T{\rm X}),\\
\big\{\Phi(v)\,:\,v\in L^2(T{\rm X})\big\}&\quad\mbox{ generates }\mathscr{L}^2(\ppi)\mbox{ as a module.}
\end{split}
\end{equation}
\end{proposition}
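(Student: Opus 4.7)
The plan is to apply the uniqueness part of the universal property defining the $L^2$-pullback module (the $L^2$-analog of Theorem \ref{thm:pullback_L0} as developed in \cite{Gigli14}): once the two characterizing properties in \eqref{eq:L2(pi)_is_pullback} are verified for the couple $(\mathscr L^2(\ppi),\Phi)$, uniqueness of the pullback automatically supplies a module isomorphism with $(\e^* L^2(T\X),\e^*)$. So no construction is required; I only need to check well-definedness of $\Phi$, the pointwise-norm identity, and the generation property.

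First I would verify that $\Phi$ really maps $L^2(T\X)$ into $\mathscr L^2(\ppi)$. For each fixed $t\in[0,1]$ and $v\in L^2(T\X)$ the pointwise identity $|\e_t^* v|=|v|\circ\e_t$ holds $\ppi$-a.e.\ by the defining property of the single pullback $\e_t^*$; integrating in $\gamma$, then in $t$, and using Fubini together with the bounded compression relation $\e_*(\ppi\times\mathcal L_1)\le\sfC(\ppi)\,\mm$ recalled in Remark \ref{rmk:f_circ_e_t_Borel} yields at once
\[
\|\Phi(v)\|^2_{\mathscr L^2(\sppi)}\le \sfC(\ppi)\,\|v\|^2_{L^2(T\X)},
\]
and in particular gives the pointwise-norm identity $|\Phi(v)|=|v|\circ\e$ $(\ppi\times\mathcal L_1)$-a.e., which is the first half of \eqref{eq:L2(pi)_is_pullback}. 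What still needs attention is Borel measurability of $\Phi(v)$ in the sense of Definition \ref{def:borelvf}: when $v\in\testv(\X)$ this follows at once from Proposition \ref{prop:test_curves_continuous}, which even gives continuity of $t\mapsto\int\la \Phi(v)_t,W_t\ra\d\ppi$ for every $W\in\testvf$; for a general $v\in L^2(T\X)$ I would approximate $v$ in $L^2(T\X)$ by test vector fields $v_n$, use the norm bound above to get $\mathscr L^2(\ppi)$-convergence $\Phi(v_n)\to\Phi(v)$, and then invoke the a.e.\ subsequence convergence provided by Proposition \ref{prop:L2(pi)_Banach} together with the fact that pointwise a.e.\ limits of Borel vector fields are Borel.

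For the generation property I would exploit the density of $\testvf$ in $\mathscr L^2(\ppi)$ coming from Proposition \ref{prop:TestG(X)_dense_in_L2}. Indeed any element of $\testvf$ has the explicit form
\[
t\mapsto\sum_{i=1}^n\varphi_i(t)\,\nchi_{A_i}\,\e_t^* v_i=\sum_{i=1}^n\bigl(\varphi_i(t)\,\nchi_{A_i}(\gamma)\bigr)\,\Phi(v_i)_t,
\]
where each factor $(\gamma,t)\mapsto\varphi_i(t)\nchi_{A_i}(\gamma)$ belongs to $L^\infty(\ppi\times\mathcal L_1)$. Hence the $L^\infty(\ppi\times\mathcal L_1)$-linear span of $\{\Phi(v)\,:\,v\in L^2(T\X)\}$ contains the dense subspace $\testvf$ and therefore generates $\mathscr L^2(\ppi)$ as a module, yielding the second half of \eqref{eq:L2(pi)_is_pullback}. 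The only genuine — but quite mild — obstacle in the whole argument is establishing the Borel-measurability condition of Definition \ref{def:borelvf} for $\Phi(v)$ with arbitrary $v\in L^2(T\X)$; this is handled by the density plus completeness trick described above, while the pointwise-norm identity and the generation statement are essentially built into the construction of $\mathscr L^2(\ppi)$.
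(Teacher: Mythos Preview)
Your proposal is correct and follows the same approach as the paper: the pointwise-norm identity is read off from the defining property of $\e_t^*$, and the generation property comes from the density of $\testvf$ in $\mathscr{L}^2(\ppi)$ (Proposition \ref{prop:TestG(X)_dense_in_L2}). The paper's proof is a two-line version of exactly this; you have simply fleshed out the well-definedness of $\Phi$ (in particular the Borel-measurability check via approximation and completeness), which the paper absorbs into the sentence preceding the statement where $\Phi$ is declared to be a linear continuous operator into $\mathscr{L}^2(\ppi)$.
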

\begin{proof}
The first in \eqref{eq:L2(pi)_is_pullback} follows by noticing
that $\big|\Phi(v)\big|(\gamma,t)=|\e_t^*v|(\gamma)=\big(|v|\circ\e\big)(\gamma,t)$ holds
for $(\ppi\times\mathcal{L}_1)$-a.e.\ $(\gamma,t)$, the second one stems from the density of $\testvf$ in $\mathscr{L}^2(\ppi)$.
\end{proof}
Notice that the notion of pullback module $\big(\e^*L^2(T{\rm X}),\e^*\big)$ makes no (explicit) reference to the concept of `test vector field' as defined it in Section \ref{se:test}. Thus this last proposition is also telling that the choice of using these test object to check Borel regularity, which a priori might seem arbitrary, leads in fact to a canonical interpretation of $\mathscr L^2(\ppi)$.
\begin{remark}[$\mathscr L^2(\ppi)$ as direct integral]{\rm
The construction of $\mathscr L^2(\ppi)$ can be summarized by saying that such space is the direct integral of the $\e_t^*L^2(T\X)$, the space of Borel vector fields being the so-called `measurable sections' and the set $\testvfn$ being the one used to check measurability.
}\fr\end{remark}
\subsection{The space $\mathscr{C}(\ppi)$}
Here we introduce and briefly study those vector fields in $\vf$ which are `continuous in time'. We start with the following definition:
\begin{definition}[The space $\mathscr{C}(\ppi)$]
Let $V\in\vf$. Then we say that $V$ is a \emph{continuous vector field} provided 
\begin{equation}
\label{eq:contsc}
[0,1]\ni t\quad \mapsto\quad \int\la V_t,W_t\ra\,\d\ppi\quad\text{ is continuous}
\end{equation}
for every $W\in\testvfn$ and 
\begin{equation}
\label{eq:contno}
[0,1]\ni t\quad \mapsto\quad {[\![V]\!]}_t\qquad\text{ is continuous.}
\end{equation}
We denote the family of all  continuous vector fields by $\mathscr{C}(\ppi)$ and, for every $V\in\mathscr C(\ppi)$, we put
\[
\|V\|_{\mathscr C(\ppi)}:=\max_{t\in[0,1]}\norm{V}_t.
\]
\end{definition}
Lemma \ref{lem:separability_TestG(X)} ensures that this definition would be unaltered if we require \eqref{eq:contsc} to hold for any $W\in\testvf$. Also, Proposition \ref{prop:test_curves_continuous} gives that $\testvf\subset \mathscr C(\ppi)$.

It is not obvious that $\mathscr C(\ppi)$ is a vector space, the problem being in checking that \eqref{eq:contno} holds for linear combinations. This will be a consequence of the density of $\testvfn$ in $\mathscr C(\ppi)$, which is part of the content of the next result:
\begin{proposition}   $(\mathscr{C}(\ppi),\|\cdot\|_{\mathscr C(\sppi)})$  is a  separable Banach space, with $\testvfn$ being dense.
\end{proposition}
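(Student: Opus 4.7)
The plan is to prove density of $\testvfn$ first (without yet assuming $\mathscr{C}(\ppi)$ is a vector space), then derive the vector space structure from density, and finally establish completeness by a direct Cauchy sequence argument.

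First I would prove density. Given $V\in\mathscr{C}(\ppi)$ and $\eps>0$, for each $t_0\in[0,1]$ use \eqref{eq:densevfn} to pick $W\in\testvfn$ with $\norm{V-W}_{t_0}<\eps$. The key observation is that
\[
\norm{V-W}_t^2 = \norm{V}_t^2 + \norm{W}_t^2 - 2\int\la V_t,W_t\ra\,\d\ppi
\]
is continuous in $t$: the first term by \eqref{eq:contno} for $V$; the second because $W\in\testvf\subset\mathscr{C}(\ppi)$ by Proposition \ref{prop:test_curves_continuous}; the third by \eqref{eq:contsc} applied to $V$ with $W\in\testvfn$. Hence $\norm{V-W}_t<\eps$ on an open neighbourhood of $t_0$; by compactness of $[0,1]$, one covers the interval by finitely many such neighbourhoods with associated $W_1,\dots,W_n\in\testvfn$, and then the Lipschitz-partition-of-unity construction carried out in Lemma \ref{lem:separability_TestG(X)} goes through verbatim to produce $\tilde W\in\testvfn$ with $\|V-\tilde W\|_{\mathscr{C}(\sppi)}$ arbitrarily small.

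Next I would use density to check that $\mathscr{C}(\ppi)$ is a vector space. Given $V,V'\in\mathscr{C}(\ppi)$ and $a,a'\in\R$, condition \eqref{eq:contsc} for $aV+a'V'$ is immediate by linearity. For \eqref{eq:contno}, expand
\[
\norm{aV+a'V'}_t^2 = a^2\norm{V}_t^2 + a'^2\norm{V'}_t^2 + 2aa'\int\la V_t,V'_t\ra\,\d\ppi;
\]
the only nontrivial term is the cross one. Approximating $V'$ by $W_n\in\testvfn$ in $\|\cdot\|_{\mathscr{C}(\sppi)}$ (Step 1) and using Cauchy--Schwarz,
\[
\Bigl|\int\la V_t,V'_t\ra\,\d\ppi - \int\la V_t,W_{n,t}\ra\,\d\ppi\Bigr|\leq \|V\|_{\mathscr{C}(\sppi)}\,\|V'-W_n\|_{\mathscr{C}(\sppi)}\to 0
\]
uniformly in $t$, so the cross term is a uniform limit of continuous functions (each $t\mapsto\int\la V_t,W_{n,t}\ra\d\ppi$ being continuous by \eqref{eq:contsc}), hence continuous. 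Finiteness of $\|V\|_{\mathscr{C}(\sppi)}$ is automatic since $\norm{V}_t$ is continuous on the compact interval $[0,1]$; the remaining norm axioms are routine.

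For completeness, let $(V_n)$ be Cauchy in $\|\cdot\|_{\mathscr{C}(\sppi)}$. Since $\norm{V_n-V_m}_t\leq\|V_n-V_m\|_{\mathscr{C}(\sppi)}$, the sequence $(V_{n,t})$ is Cauchy in the complete space $\e_t^*L^2(T\X)$ for every $t$, yielding a pointwise limit $V_t\in\e_t^*L^2(T\X)$, hence $V\in\vf$. The bound $\norm{V_n-V}_t\leq\limsup_m\|V_n-V_m\|_{\mathscr{C}(\sppi)}$ gives uniform convergence $V_n\to V$ in $t$-norm. Both defining properties of $\mathscr{C}(\ppi)$ then transfer to $V$ as uniform limits of continuous functions: \eqref{eq:contsc} via Cauchy--Schwarz, \eqref{eq:contno} via the triangle inequality $|\norm{V_n}_t-\norm{V}_t|\leq\norm{V_n-V}_t$. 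Separability is then immediate from the countability of $\testvfn$ combined with density.

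The main obstacle is the vector-space property itself: the norm-continuity clause \eqref{eq:contno} is not manifestly preserved under addition, and a naive attempt to verify it for $V+V'$ requires continuity of $t\mapsto\int\la V_t,V'_t\ra\d\ppi$, which is not part of the definition. The resolution is to prove density first---this step only relies on continuity of $\norm{V-W}_t$ with $W\in\testvfn$, which is directly available---and then to exploit that density to control the cross term by approximation. Once this is done, completeness and separability are essentially routine.
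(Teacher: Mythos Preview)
Your proposal is correct and follows essentially the same route as the paper: prove density of $\testvfn$ first (via continuity of $t\mapsto\norm{V-W}_t^2$ and the partition-of-unity argument of Lemma \ref{lem:separability_TestG(X)}), then use density to handle the vector-space property, with completeness and separability being routine. The only cosmetic difference is that the paper verifies \eqref{eq:contno} for $V_1+V_2$ by approximating both summands and bounding $\big|\norm{V_1+V_2}_t-\norm{W_{1,n}+W_{2,n}}_t\big|$ directly, whereas you expand the square and control the cross term by approximating one factor; both are equivalent implementations of the same idea.
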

\begin{proof}
Let $V_1,V_2\in\mathscr C(\ppi)$ and notice that using \eqref{eq:contsc}, \eqref{eq:contno} and arguing exactly as in the proof of Lemma \ref{lem:separability_TestG(X)} we can find $(W_{1,n}),(W_{2,n})\subset \testvfn$ such that the functions $t\mapsto\norm{V_i-W_{i,n}}_t$ uniformly converge to 0 as $n\to\infty$, $i=1,2$.

Now observe that since $W_{1,n}+W_{2,n}\in\testvfn$ the map $t\mapsto\norm{W_{1,n}+W_{2,n}}_t$ is continuous and that for every $t\in[0,1]$ we have
\[
\big|\norm{V_1+V_2}_t-\norm{W_{1,n}+W_{2,n}}_t\big|\leq \norm{V_1-W_{1,n}+V_2-W_{2,n}}_t\leq \norm{V_1-W_{1,n}}_t+\norm{V_2-W_{2,n}}_t.
\]
Hence $t\mapsto \norm{V_1+V_2}_t$ is the uniform limit of continuous functions and thus continuous itself. Since trivially $\mathscr C(\ppi)$ is closed by multiplication by scalars we proved that it is a vector space. That $\|\cdot\|_{\mathscr C(\ppi)}$ is a complete norm on it is trivial and the density of $\testvfn$ has already been shown, hence the proof is finished.
\end{proof}
A useful consequence of the density of test vector fields is the following strengthening of the continuity property:
\begin{corollary}\label{cor:contnorm}
Let $V\in \mathscr C(\ppi)$. Then the map $t\mapsto |V_t|^2\in L^1(\ppi)$ is continuous.
\end{corollary}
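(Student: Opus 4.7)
My plan is to argue by density. For a test vector field $V \in \testvfn$, we have $|V_t|^2 = \langle V_t, V_t\rangle$, and Proposition \ref{prop:test_curves_continuous} (with $W = V$) already gives that $t \mapsto |V_t|^2 \in L^1(\ppi)$ is continuous. So the statement is known on the dense subspace $\testvfn \subset \mathscr{C}(\ppi)$, and I only need to transfer it to an arbitrary $V \in \mathscr{C}(\ppi)$ by a uniform estimate.

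The key pointwise estimate is the reverse triangle inequality $\big||V_t| - |V_{n,t}|\big| \leq |V_t - V_{n,t}|$ valid $\ppi$-a.e.\ (as this holds in any $L^0$-normed module). Combined with $|V_t|^2 - |V_{n,t}|^2 = (|V_t| - |V_{n,t}|)(|V_t| + |V_{n,t}|)$ and Cauchy--Schwarz in $L^2(\ppi)$, this gives
\[
\big\||V_t|^2 - |V_{n,t}|^2\big\|_{L^1(\sppi)} \leq \|V - V_n\|_t \cdot (\|V\|_t + \|V_n\|_t) \leq \|V - V_n\|_{\mathscr{C}(\sppi)}\big(2\|V\|_{\mathscr{C}(\sppi)} + \|V - V_n\|_{\mathscr{C}(\sppi)}\big),
\]
uniformly in $t \in [0,1]$.

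Given $\eps > 0$, by density of $\testvfn$ in $\mathscr{C}(\ppi)$ I can pick $V_n \in \testvfn$ with $\|V - V_n\|_{\mathscr{C}(\sppi)}$ so small that the right-hand side above is below $\eps/3$ for every $t$. Then for any $t_0 \in [0,1]$ and $t$ sufficiently close to $t_0$, continuity of $s \mapsto |V_{n,s}|^2$ in $L^1(\ppi)$ (which holds since $V_n \in \testvfn$) gives $\big\||V_{n,t}|^2 - |V_{n,t_0}|^2\big\|_{L^1(\sppi)} < \eps/3$. A standard three-term triangle inequality then yields $\big\||V_t|^2 - |V_{t_0}|^2\big\|_{L^1(\sppi)} < \eps$, proving continuity.

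I don't foresee any real obstacle: the only thing to be slightly careful about is the pointwise reverse triangle inequality on the pullback module, but this is automatic from the defining properties of $L^0$-normed modules (the pointwise norm satisfies $\big||v|-|w|\big| \leq |v-w|$ a.e.). Everything else is a routine approximation argument made possible by the density statement just proved.
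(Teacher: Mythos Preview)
Your proof is correct and follows essentially the same approach as the paper's: both establish a uniform-in-$t$ estimate $\big\||V_t|^2 - |W_t|^2\big\|_{L^1(\sppi)} \lesssim \|V-W\|_{\mathscr{C}(\sppi)}$ and then invoke the density of test vector fields in $\mathscr{C}(\ppi)$ together with Proposition~\ref{prop:test_curves_continuous}. The only cosmetic difference is that the paper bounds $\big||V_t|^2-|W_t|^2\big|$ via the pointwise Cauchy--Schwarz inequality $|\langle V_t+W_t, V_t-W_t\rangle| \leq |V_t+W_t|\,|V_t-W_t|$ in the Hilbert module, whereas you use the factorization $(|V_t|-|W_t|)(|V_t|+|W_t|)$ and the reverse triangle inequality---your route has the minor advantage of not requiring the Hilbert structure.
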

\begin{proof}
For $V\in\testvf$ the claim has been proved in Proposition \ref{prop:test_curves_continuous}. Now notice that for $V,W\in\mathscr C(\ppi)$ we have
\[
\int||V_t|^2-|W_t|^2|\,\d\ppi\leq \int|V_t+W_t|\,|V_t-W_t|\,\d\ppi\leq (\|V\|_{\mathscr C(\sppi)}+\|W\|_{\mathscr C(\sppi)})\norm{V-W}_t
\]
thus showing that if $V_n\to V$ in $\mathscr C(\ppi)$ then $t\mapsto |V_{n,t}|^2\in L^1(\ppi)$ uniformly converge to $t\mapsto |V_t|^2\in L^1(\ppi)$. The conclusion then follows from the density of $\testvf$ in $\mathscr C(\ppi)$.
\end{proof}

\subsection{The spaces \texorpdfstring{$\mathscr{W}^{1,2}(\ppi)$}{W(pi)} and $\mathscr H^{1,2}(\ppi)$}

Throughout all this section we shall further assume that the test plan $\ppi$ is Lipschitz in the sense of Definition \ref{def:lip_test_plan}.

\bigskip

Let $v\in W^{1,2}_C(T\X)$ and notice that the map from $L^0(T\X)$ to $\e_t^*L^0(T\X)$ defined by
\[
w\quad\mapsto\quad \e_t^*(\nabla_wv)
\]
satisfies
\[
|\e_t^*(\nabla_wv)|\leq |\nabla v|_{\sf HS}\circ\e_t\,|w|\circ\e_t\qquad\ppi-a.e..
\]
Hence by the universal property of the pullback given in Proposition \ref{prop:pullback_L0_univ_prop} we know that there exists a unique $L^0(\ppi)$-linear continuous operator, which we shall call ${\rm Cov}(v,\cdot)$ from $\e_t^*L^0(T\X)$ to $\e_t^*L^0(T\X)$ such that
\[
{\rm Cov}(v,\e_t^*w)=\e_t^*(\nabla_wv)\qquad\forall w\in L^0(T\X)
\]
and such operator satisfies the bound
\begin{equation}
\label{eq:b3}
|{\rm Cov}(v,W)|\leq |\nabla v|_{\sf HS}\circ\e_t|W|\qquad\ppi-a.e..
\end{equation}
We shall be interested in such covariant differentiation along the speed of our test plan: for every $t\in[0,1]$ such that $\ppi'_t$ exists we define the map ${\rm Cov}_t:W^{1,2}_C(T\X)\to \e^*_tL^0(T\X)$ as
\[
{{\rm Cov}_t(v)}:={\rm Cov}(v,\ppi'_t).
\]
Notice the following simple proposition:
\begin{proposition}\label{prop:basecov}
For every $t\in[0,1]$ such that $\ppi'_t$ exists, the map ${{\rm Cov}_t}$ is linear and continuous from $W^{1,2}_C(T\X)$ to $\e^*_tL^2(T\X)$.

Moreover, for every $v\in W^{1,2}_C(T\X)$ the (equivalence class up to a.e.\ equality of the) a.e.\ defined map $t\mapsto {{\rm Cov}_t(v)}\in \e^*_tL^2(T\X)$ is  an element of $\mathscr L^2(\ppi)$.
\end{proposition}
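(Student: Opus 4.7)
For $(i)$, linearity of ${\rm Cov}_t(\cdot)$ in $v$ is inherited from that of $v\mapsto\nabla_w v$ by the uniqueness in Proposition \ref{prop:pullback_L0_univ_prop}; continuity follows by combining the pointwise bound \eqref{eq:b3} with the Lipschitz estimate $|\ppi'_t|\leq\sfL(\ppi)$ $\ppi$-a.e.\ and the bounded compression $(\e_t)_*\ppi\leq\sfC(\ppi)\mm$:
\[
\|{\rm Cov}_t(v)\|_{\e_t^*L^2(T\X)}^2\leq\sfL(\ppi)^2\int|\nabla v|_{\sf HS}^2\circ\e_t\,\d\ppi\leq\sfL(\ppi)^2\sfC(\ppi)\,\||\nabla v|_{\sf HS}\|_{L^2(\mm)}^2.
\]
Integrating this estimate in $t$ already shows that $t\mapsto{\rm Cov}_t(v)$ has finite $\mathscr L^2(\ppi)$-norm, so the only remaining content of $(ii)$ is the Borel regularity in the sense of Definition \ref{def:borelvf}.

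The strategy for Borel regularity is to reduce first to the case $v\in{\rm TestV}(\X)$ by density. If $v_n\in{\rm TestV}(\X)$ with $v_n\to v$ in $W^{1,2}_C(T\X)$, then the continuity from $(i)$ makes ${\rm Cov}_\cdot(v_n)$ Cauchy in $\mathscr L^2(\ppi)$, and the extraction of a.e.\ convergent subsequences used in the proof of Proposition \ref{prop:L2(pi)_Banach} shows that the class of Borel vector fields is closed under $\mathscr L^2(\ppi)$-limits, so the limit is Borel. For $v=\sum_i f_i\nabla g_i$ with $f_i,g_i\in{\rm TestF}(\X)$, expanding $\nabla_Y v=\sum_i\bigl(\d f_i(Y)\nabla g_i+f_i\nabla_Y\nabla g_i\bigr)$ and pairing $\la{\rm Cov}_t(v),W_t\ra$ for a generator $W=\psi(t)\nchi_U\e_t^*\nabla h\in\testvfn$ with $h\in\mathcal F_3$ produces, after exploiting the symmetry of the Hessian $\la\nabla_Y\nabla g_i,\nabla h\ra=\la Y,\nabla_{\nabla h}\nabla g_i\ra$, a finite sum of two types of terms
\[
\psi(t)\nchi_U\,\e_t^*\la\nabla g_i,\nabla h\ra\,(\e_t^*\d f_i)(\ppi'_t)\quad\text{and}\quad\psi(t)\nchi_U\,\e_t^*f_i\,\la\ppi'_t,\e_t^*(\nabla_{\nabla h}\nabla g_i)\ra,
\]
whose integrals against $\ppi$ must be shown to be Borel in $t$.

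The first summand is Borel in $t$ by a direct application of Proposition \ref{prop:perborelsp} together with the bounded, jointly Borel weight supplied by Remark \ref{rmk:f_circ_e_t_Borel} and the continuity coming from Theorem \ref{thm:cont_f_circ_e_t}. The main obstacle is the second term, in which the vector field $\nabla_{\nabla h}\nabla g_i$ lies a priori only in $L^1(T\X)$ because ${\rm Hess}(g_i)$ and $\nabla h$ are only $L^2$; the plan is to handle it via a double approximation. First truncate $\nabla h$ by the bounded Borel multiplier $\phi^n:=\nchi_{\{|\nabla h|\leq n\}}$: then $\phi^n\nabla h\in L^2\cap L^\infty$ and $\nabla_{\phi^n\nabla h}\nabla g_i=\phi^n\nabla_{\nabla h}\nabla g_i\in L^2(T\X)$ with $L^1(T\X)$-convergence to $\nabla_{\nabla h}\nabla g_i$. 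Then approximate each of these $L^2$ vector fields by finite combinations $\sum_j\eta_j\nabla F_j$ with $\eta_j$ bounded simple Borel and $F_j\in\mathcal F_3$, for which
\[
\la\ppi'_t,\e_t^*\textstyle\sum_j\eta_j\nabla F_j\ra=\sum_j(\eta_j\circ\e_t)\,(\e_t^*\d F_j)(\ppi'_t)
\]
is Borel in $t$ by Proposition \ref{prop:perborelsp} and Remark \ref{rmk:f_circ_e_t_Borel}. Finally, the pointwise estimate $|\la\ppi'_t,\e_t^*V\ra|\leq|\ppi'_t|\,|V|\circ\e_t$ combined with Cauchy-Schwarz, the Lipschitz bound on $|\ppi'_t|$ and bounded compression yields a Cauchy control in $L^1(\ppi)$ uniform in $t$, so the relevant maps are uniform-in-$t$ limits in $L^1(\ppi)$ of maps admitting Borel representatives, and hence admit Borel representatives themselves, concluding the proof.
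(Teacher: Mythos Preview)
Your argument is correct, but the paper takes a much shorter route. The difference lies in \emph{which} object is approximated in order to establish Borel regularity. You fix $\ppi'_t$ and approximate $v$ by elements of ${\rm TestV}(\X)$, which forces you to expand $\nabla_Y v$ explicitly, invoke the symmetry of the Hessian, and then run a double approximation (truncation of $\nabla h$ followed by approximation in $L^2(T\X)$) to tame the a priori only $L^1$ vector field $\nabla_{\nabla h}\nabla g_i$. The paper instead fixes $v\in W^{1,2}_C(T\X)$ once and for all and approximates the \emph{direction} $\ppi'_t$: since $(t\mapsto\ppi'_t)\in\mathscr L^2(\ppi)$ by Proposition~\ref{prop:pi'_in_L2(pi)} and $\testvf$ is dense there, it suffices (via the bound \eqref{eq:b3} and Proposition~\ref{prop:L2(pi)_Banach}) to show that $t\mapsto{\rm Cov}(v,V_t)$ is Borel for every $V\in\testvf$. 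Writing $V_t=\sum_i\varphi_i(t)\nchi_{A_i}\e_t^*v_i$ and testing against $W_t=\sum_j\psi_j(t)\nchi_{B_j}\e_t^*w_j$, the pairing $\int\la W_t,{\rm Cov}(v,V_t)\ra\,\d\ppi$ is a finite sum of terms $\varphi_i(t)\psi_j(t)\int\nchi_{A_i\cap B_j}\la w_j,\nabla_{v_i}v\ra\circ\e_t\,\d\ppi$. Since $v_i,w_j\in{\rm TestV}(\X)\subset L^2\cap L^\infty(T\X)$, the function $\la w_j,\nabla_{v_i}v\ra$ lies in $L^1(\mm)$, and Theorem~\ref{thm:cont_f_circ_e_t} gives continuity (not merely Borel regularity) of these maps in $t$---no explicit structure of $v$, no Hessian, and no secondary approximation needed.

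What your approach buys is that it stays closer to the definition of $\ppi'_t$ via Proposition~\ref{prop:perborelsp}, never appealing to the fact that $\ppi'\in\mathscr L^2(\ppi)$; what the paper's approach buys is a one-line reduction that works uniformly for all $v\in W^{1,2}_C(T\X)$ and yields continuity rather than mere measurability.
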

\begin{proof}
The continuity of ${\rm Cov}_t$ as map from $W^{1,2}_C(T\X)$ to $\e^*_tL^2(T\X)$ is a direct consequence of the bounds \eqref{eq:b3} and our assumption \eqref{eq:lip_test_plan}:
\[
\|{{\rm Cov}_t(v)}\|_{\e^*_tL^2(T\X)}^2=\int |{{\rm Cov}_t(v)}|^2\,\d\ppi\stackrel{\eqref{eq:b3}}\leq \int |\nabla v|^2_{\sf HS}\circ\e_t|\ppi'_t|^2\,\d\ppi\leq \sfC(\ppi)\,{\sfL(\ppi)}^2\,{\|v\|}_{W^{1,2}_C(T{\rm X})}^2.
\]
Thanks to this bound, to conclude it is sufficient to show that for any $v\in W^{1,2}_C(T\X)$ the map $t\mapsto {\rm Cov}_t(v)={\rm Cov}(v,\ppi'_t)$ is a.e.\ equal to a Borel element of  $\vf$. Taking into account that $t\mapsto \ppi'_t\in\mathscr L^2(\ppi)$ by Proposition \ref{prop:pi'_in_L2(pi)}, that $\testvf$ is dense in $\mathscr L^2(\ppi)$, the second claim in Proposition \ref{prop:L2(pi)_Banach} and the bound \eqref{eq:b3}, we see that to conclude it is sufficient to show that $t\mapsto {\rm Cov}(v,V_t)$ is a Borel vector field in $\vf$ for any $V\in\testvf$.

Thus fix such $V$, say $V_t=\sum_i\phi_i(t)\nchi_{A_i}\e_t^*v_i$, and let $W_t=\sum_j\psi_j(t)\nchi_{B_j}\e_t^*w_j\in\testvf$ be arbitrary. Notice that since $|v_i|,|w_j|\in L^2\cap L^\infty(\X)$, we have that $\la v_i,\nabla_{w_j}v\ra\in L^1(\X)$ and thus by Theorem \ref{thm:cont_f_circ_e_t} we deduce that $t\mapsto \la v_i,\nabla_{w_j}v\ra\circ\e_t\in L^1(\ppi)$ is continuous for every $i,j$. Therefore
\[
t\qquad\mapsto\qquad\int\big\langle V_t,{\rm Cov}_t(v,W_t)\big\rangle\,\d\ppi
=\sum_{i,j}\varphi_i(t)\,\psi_j(t)\int\nchi_{A_i\cap B_j}\la v_i,\nabla_{w_j}v\ra\circ\e_t
\,\d\ppi
\]
is continuous, thus establishing, by the arbitrariness of $W$, the Borel regularity of $t\mapsto {\rm Cov}(v,V_t)$.
\end{proof}
The `compatibility with the metric' of the covariant derivative yields the following simple but crucial lemma:
\begin{lemma}\label{lem:lebniz_cov}
Let $v,w\in {\rm TestV}(\X)$. Then the map 
$t\mapsto\la v,w\ra\circ\e_t\in L^2(\ppi)$, which is Lipschitz
by Proposition \ref{prop:pi'_t_in_L2}, satisfies
\begin{equation}\label{eq:leibniz_cov_2}
L^2(\ppi)\text{-}\frac{\d}{\d t}\,\la v,w\ra\circ\e_t
=\big\langle{{\rm Cov}_\sppi(v)}_t,\e_t^*w\big\rangle+\big\langle\e_t^*v,{{\rm Cov}_\sppi(w)}_t\big\rangle
\quad\mbox{ for }\mathcal{L}^1\mbox{-a.e.\ }t\in[0,1].
\end{equation}
 \end{lemma}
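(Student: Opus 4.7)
The plan is to reduce the statement to a single application of the compatibility of the metric with the covariant derivative, combined with the previously established Proposition \ref{prop:pi'_t_in_L2}. First, observe that for $v,w\in\test{\X}$, the function $f:=\la v,w\ra$ belongs to $L^\infty\cap W^{1,2}(\X)$: indeed, writing $v=\sum_i f_i\nabla g_i$ and $w=\sum_j h_j\nabla k_j$, we have $f=\sum_{i,j}f_ih_j\la\nabla g_i,\nabla k_j\ra$, which is a finite sum of products of test functions (hence Lipschitz, bounded, and Sobolev). Applying Proposition \ref{prop:pi'_t_in_L2} to $f$, we obtain that $t\mapsto f\circ\e_t\in L^2(\ppi)$ is Lipschitz with
\[
L^2(\ppi)\text{-}\ddt\,(\la v,w\ra\circ\e_t)=(\e_t^*\d f)(\ppi'_t)\quad\text{for }\mathcal L^1\text{-a.e.\ }t\in[0,1].
\]

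Next, I would invoke the metric-compatibility property of the covariant derivative on $\RCD(K,\infty)$ spaces (one of the defining properties of $\nabla$ built in \cite{Gigli14}): for every $u\in L^2(T\X)$,
\[
\d\la v,w\ra(u)=\la\nabla_u v,w\ra+\la v,\nabla_u w\ra\quad\mm\text{-a.e.\ in }\X.
\]
Pulling this identity back via $\e_t^*$ and using that $\e_t^*$ commutes with the pointwise inner product and that $\e_t^*(\nabla_u v)={\rm Cov}(v,\e_t^*u)$ by the very definition of ${\rm Cov}$, we deduce
\[
(\e_t^*\d f)(\e_t^* u)=\big\la{\rm Cov}(v,\e_t^* u),\e_t^* w\big\ra+\big\la\e_t^* v,{\rm Cov}(w,\e_t^* u)\big\ra\quad\ppi\text{-a.e.}
\]
for every $u\in L^2(T\X)$. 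By the $L^0(\ppi)$-linearity of all maps involved (the duality pairing, the pullback module structure, and the operators ${\rm Cov}(v,\cdot)$, ${\rm Cov}(w,\cdot)$ from Proposition \ref{prop:pullback_L0_univ_prop}), the identity extends to elements of the form $W=\sum_k\nchi_{A_k}\e_t^* u_k$, and then by density of such elements in $\e_t^* L^2(T\X)$ (property (ii) of Theorem \ref{thm:pullback_L0}) and continuity of both sides in $W$, it extends to arbitrary $W\in\e_t^* L^2(T\X)$. Specializing to $W=\ppi'_t$ and recalling that ${\rm Cov}(v,\ppi'_t)={{\rm Cov}_\sppi(v)}_t$ yields the claimed formula.

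The only non-routine point is the metric-compatibility identity itself, but this is part of the definition (equivalently: an immediate consequence of the defining integration-by-parts property) of the covariant derivative in Gigli's calculus, so no additional work is required here. The remaining passage is the careful $L^0(\ppi)$-linear extension from $W=\e_t^* u$ to $W=\ppi'_t$, which is standard once Proposition \ref{prop:pullback_L0_univ_prop} is in place.
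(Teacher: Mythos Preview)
Your proof is correct and follows essentially the same route as the paper's: invoke the metric-compatibility identity $\d\la v,w\ra(z)=\la\nabla_z v,w\ra+\la v,\nabla_z w\ra$ from \cite{Gigli14}, pull it back via $\e_t^*$ to obtain the identity for $Z=\e_t^*z$, extend by $L^\infty(\ppi)$-linearity and density to all $Z\in\e_t^*L^2(T\X)$, and then specialize $Z=\ppi'_t$ together with Proposition~\ref{prop:pi'_t_in_L2}. One minor wording issue: in your justification that $f=\la v,w\ra\in W^{1,2}\cap L^\infty$, the factor $\la\nabla g_i,\nabla k_j\ra$ is not itself a test function, but it is known to lie in $W^{1,2}\cap L^\infty$ on $\RCD(K,\infty)$ spaces (this is part of the calculus in \cite{Gigli14}), so your conclusion stands.
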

\begin{proof}
Recall from \cite{Gigli14} that it holds
\[
\d(\la v,w\ra)(z)=\la \nabla_zv,w\ra+\la v,\nabla_zw\ra \quad\mm-a.e.\qquad\forall z\in L^0(T\X)
\]
and notice that from the defining property of pointwise norm in the pullback and by polarization we obtain that $\la\e_t^*v_1,\e_t^*v_2\ra=\la v_1,v_2\ra\circ\e_t$ for every $v_1,v_2\in L^0(T\X)$. Thus we have that the identity
\begin{equation}\label{eq:leibniz_cov_1}
\big(\e_t^*\d\la v,w\ra\big)(Z)=
\big\langle{\rm Cov}_t(v,Z),\e_t^*w\big\rangle+\big\langle\e_t^*v,{\rm Cov}_t(w,Z)\big\rangle
\end{equation} 
holds for every $Z\in\e_t^*L^2(T{\rm X})$ of the form $Z_t=\e_t^*z$ for some $z\in L^2(T\X)$. Since both sides of this identity are  $L^\infty(\ppi)$-linear and continuous in $Z$, we see that \eqref{eq:leibniz_cov_1} holds for generic $Z\in\e_t^*L^2(T{\rm X})$. The conclusion comes picking $Z=\ppi'_t$ and recalling Proposition \ref{prop:pi'_t_in_L2}.
\end{proof}

We now want to introduce a new differential operator, initially defined only on $\testvf$ and then extended to more general vector fields. To this aim the following lemma will be useful.
\begin{lemma}\label{le:perconv}
Let $(\varphi_i),(\psi_j)\subset {\rm LIP}([0,1])$, $(A_i),(B_j)$ Borel partitions of $\Gamma(\X)$ and $(v_i),(w_j)\subset{\rm TestV}(\X)$, where $i=1,\ldots,n$ and $j=1,\ldots,m$. Assume that
\begin{equation}
\label{eq:assder}
\sum_i\nchi_{A_i}\varphi_i(t)\,\e_t^*v_i=\sum_j\nchi_{B_j}\psi_j(t)\,\e_t^*w_j\qquad\text{ for every }t\in[0,1].
\end{equation}
Then for a.e.\ $t$ it holds:
\begin{equation}
\label{eq:perconv}
\begin{split}
\sum_i\nchi_{A_i}\varphi'_i(t)\,\e_t^*v_i&=\sum_j\nchi_{B_j}\psi_j'(t)\,\e_t^*w_j,\\
\sum_i\nchi_{A_i}\varphi_i(t)\,{\rm Cov}_t(v_i)&=\sum_j\nchi_{B_j}\psi_j(t)\,{\rm Cov}_t(w_j).
\end{split}
\end{equation}
\end{lemma}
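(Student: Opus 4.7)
The plan is to reduce to term-wise equalities on a common refinement of $(A_i)$ and $(B_j)$, deduce the second identity in \eqref{eq:perconv} from a coincidence-set principle for Sobolev vector fields, and deduce the first identity by subtracting this from a ``combined'' derivative identity obtained via Lemma \ref{lem:lebniz_cov}.

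Set $C_{ij}:=A_i\cap B_j$ and rewrite \eqref{eq:assder} as $\sum_{i,j}\nchi_{C_{ij}}\bigl[\varphi_i(t)\e_t^*v_i-\psi_j(t)\e_t^*w_j\bigr]=0$ for every $t\in[0,1]$. Since $(C_{ij})_{i,j}$ is a Borel partition of $\Gamma(\X)$, each summand vanishes individually, so for every pair $(i,j)$ and every $t$ the test vector field $u_{ij,t}:=\varphi_i(t)v_i-\psi_j(t)w_j\in W^{1,2}_C(T\X)$ vanishes $(\e_t)_*(\nchi_{C_{ij}}\ppi)$-a.e., hence $\mm$-a.e.\ on the Borel set $E^{ij}_t:=\{\d(\e_t)_*(\nchi_{C_{ij}}\ppi)/\d\mm>0\}$, the density existing and being essentially bounded by the test-plan condition.

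\medskip

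For the second identity in \eqref{eq:perconv} we invoke the following coincidence-set principle for Sobolev vector fields: if $u\in W^{1,2}_C(T\X)$ vanishes $\mm$-a.e.\ on a Borel set $E\subset\X$, then $\nabla u=0$ $\mm$-a.e.\ on $E$. Indeed, for every test vector field $z$ the pairing $\la u,z\ra$ belongs to $W^{1,2}(\X)$ and vanishes on $E$, so by the scalar coincidence principle $\d\la u,z\ra=0$ $\mm$-a.e.\ on $E$; combining with the compatibility identity $\d\la u,z\ra(X)=\la\nabla_X u,z\ra+\la u,\nabla_X z\ra$ (from the second-order calculus of \cite{Gigli14}) and $u|_E=0$ yields $\la\nabla_X u,z\ra=0$ $\mm$-a.e.\ on $E$ for every test $X,z$, whence $\nabla u=0$ $\mm$-a.e.\ on $E$ by density of $\testv(\X)$ in $L^2(T\X)$. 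Applying this to $u_{ij,t}$ gives $\varphi_i(t)\nabla v_i=\psi_j(t)\nabla w_j$ $\mm$-a.e.\ on $E^{ij}_t$, which, pulled back along $\e_t$ and contracted with $\ppi'_t$, provides
\[
\nchi_{C_{ij}}\bigl[\varphi_i(t)\,{\rm Cov}_t(v_i)-\psi_j(t)\,{\rm Cov}_t(w_j)\bigr]=0\quad\ppi\text{-a.e.}
\]
for a.e.\ $t$; summing over $(i,j)$ yields the second identity in \eqref{eq:perconv}.

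\medskip

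To obtain the first identity, fix $C\in\mathscr B(\Gamma(\X))$ and $u\in\testv(\X)$ and consider $t\mapsto\int_C\la V_t,\e_t^*u\ra\,\d\ppi$, which is Lipschitz by Proposition \ref{prop:pi'_t_in_L2} and admits two equal expansions, one via each representation in \eqref{eq:assder}. Differentiating a.e.\ using the product rule and Lemma \ref{lem:lebniz_cov}, and noting that the term $\int_C\la V_t,{\rm Cov}_t(u)\ra\,\d\ppi$ is identical in the two expansions (since $V_t$ agrees) and hence cancels, we obtain for a.e.\ $t$ and every such $C,u$
\[
\sum_i\varphi'_i(t)\!\int_{A_i\cap C}\!\la v_i,u\ra\circ\e_t\,\d\ppi+\sum_i\varphi_i(t)\!\int_{A_i\cap C}\!\la{\rm Cov}_t(v_i),\e_t^*u\ra\,\d\ppi=(\text{analogous RHS}).
\]
Letting $C$ and $u$ range over countable dense families and using that $\{\nchi_C\e_t^*u\}$ generates $\e_t^*L^2(T\X)$ as a module, one gets the pointwise ``combined'' identity $\sum_i\nchi_{A_i}\bigl[\varphi'_i(t)\e_t^*v_i+\varphi_i(t){\rm Cov}_t(v_i)\bigr]=\sum_j\nchi_{B_j}\bigl[\psi'_j(t)\e_t^*w_j+\psi_j(t){\rm Cov}_t(w_j)\bigr]$ for a.e.\ $t$ in $\e_t^*L^2(T\X)$; subtracting the already established second identity in \eqref{eq:perconv} produces the first one. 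The principal obstacle is the vector-field coincidence-set principle, which is the geometric input specific to the non-smooth setting; once this is granted, the rest is a routine Leibniz-type computation in the functional framework developed earlier in the paper.
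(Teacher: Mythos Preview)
Your proof is correct. For the second identity in \eqref{eq:perconv} your coincidence-set principle is exactly the locality of the covariant derivative that the paper invokes in one line; you simply unpack the argument (and in your application $u_{ij,t}\in\testv(\X)$, so the compatibility formula you quote is available without further justification).

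For the first identity your route genuinely differs from the paper's. The paper argues directly: via Proposition~\ref{prop:l2pb} the hypothesis \eqref{eq:assder} becomes a single equality in the fixed module $\e^*L^2(T\X)\sim\mathscr L^2(\ppi)$ between $L^\infty(\ppi\times\mathcal L_1)$-linear combinations of the $\e^*v_i$ and $\e^*w_j$; the coefficients $\nchi_{A_i}\otimes\varphi_i$ are Lipschitz in $t$ uniformly in $\gamma$, and ``differentiating in time'' yields the first line of \eqref{eq:perconv} after re-slicing via Proposition~\ref{prop:l2pb}. You instead test \eqref{eq:assder} against $\nchi_C\,\e_t^*u$, differentiate with Lemma~\ref{lem:lebniz_cov}, cancel the common $\la V_t,{\rm Cov}_t(u)\ra$ term, and obtain the \emph{combined} identity (the well-definedness of $\widetilde{\rm D}_\sppi V$) for a.e.\ $t$ by a countable-density argument; subtracting the already established second identity isolates the first. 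Your approach is longer and makes the first identity depend on the second, but it has two advantages: it foregrounds the combined identity, which is exactly what Definition~\ref{def:convect} actually needs, and it is fully explicit, whereas the paper's time-differentiation in $\e^*L^2(T\X)$ is stated rather tersely. The paper's approach, on the other hand, treats the two identities independently and is shorter once the module identification of Proposition~\ref{prop:l2pb} is taken for granted.
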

\begin{proof}
For the first in \eqref{eq:perconv} we notice that our assumption \eqref{eq:assder} and Proposition \ref{prop:l2pb} yield that $\sum_i\nchi_{A_i\times[0,1]}(\cdot,t)\,\varphi_i(t)\,\e^*v_i=\sum_j\nchi_{B_j\times[0,1]}(\cdot,t)\,\psi_j(t)\,\e^*w_j$ as elements in $\e^*L^2(T\X)\sim\mathscr L^2(\ppi)$, thus we can differentiate in time and conclude using again Proposition \ref{prop:l2pb}. 

For the second in \eqref{eq:perconv} start noticing that our assumption \eqref{eq:assder} and the very definition of pullback imply that for any $i,j$ and every $t\in[0,1]$ it holds $\nchi_C\varphi_i(t)\,v_i=\nchi_C\psi_j(t)\,w_j$,
where $C:=\big\{\frac{\d(\e_t)_*(\nchi_{A_i\cap B_j}\sppi)}{\d(\e_t)_*\sppi}>0\big\}$. This identity and the locality of the covariant derivative give that $\nchi_C\varphi_i(t)\,\nabla_zv_i=\nchi_C\psi_j(t)\,\nabla_zw_j$ for every $z\in L^2(T\X)$. Applying the pullback map on both sides and noticing that $\nchi_C\circ\e_t\geq \nchi_{A_i\cap B_j}$ we deduce that
\[
\nchi_{A_i\cap B_j}\varphi_i(t)\,{\rm Cov}(v_i,Z)=\nchi_{A_i\cap B_j}\psi_j(t)\,{\rm Cov}(w_j,Z)
\]
for every $Z$ of the form $Z_t=\e_t^*z$. From the $L^\infty(\ppi)$-linearity in $Z$ of both sides and the arbitrariness of $i,j$ the conclusion follows.
\end{proof}
We can now define the convective derivative of test vector fields:
\begin{definition}[Convective derivative along a test plan]\label{def:convect}
We define the \emph{convective derivative} operator $\widetilde{\rm D}_\sppi:\testvf \to
\mathscr{L}^2(\ppi)$ as follows:
to the element $V\in\testvf$, of the form $V_t=\sum_{i=1}^n\varphi_i(t)\,\nchi_{A_i}\,\e_t^*v_i$,
we associate the vector field $\widetilde{\rm D}_\sppi V\in\mathscr{L}^2(\ppi)$ given by
\begin{equation}\label{eq:convect}
{(\widetilde{\rm D}_\sppi V)}_t:=\sum_{i=1}^n\nchi_{A_i}\Big(\varphi'_i(t)\,\e_t^*v_i
+\varphi_i(t)\,{{\rm Cov}_t(v_i)}\Big)\quad\mbox{ for }\mathcal{L}^1\mbox{-a.e.\ }t\in[0,1].
\end{equation}
For the sake of simplicity, we will briefly write $\widetilde{\rm D}_\sppi V_t$
instead of ${(\widetilde{\rm D}_\sppi V)}_t$.
\end{definition}
Notice that Lemma \ref{le:perconv} ensures that the right hand side of \eqref{eq:convect} depends only on $V$ and not on the way we write it as $V_t=\sum_{i=1}^n\varphi_i(t)\,\nchi_{A_i}\,\e_t^*v_i$. The fact that the right hand side of \eqref{eq:convect} defines a Borel vector field in $\vf$ follows directly from Proposition \ref{prop:basecov}; to see that it belongs to $\mathscr L^2(\ppi)$ notice that $(t\mapsto \e_t^*v_i),{\rm Cov}_\sppi(v_i)\in\mathscr L^2(\ppi)$ for every $i$ and  that the $\varphi_i$'s are Lipschitz. 

Therefore the definition is well posed and is then clear  that $\widetilde{\rm D}_\sppi$ is a linear operator.
\bigskip

The convective derivative has the following simple and crucial property, which is a direct consequence of Lemma \ref{lem:lebniz_cov}.
\begin{proposition}\label{prop:leibniz_convect_integral}
Let $V,W\in\testvf$. Then the map $t\mapsto\la V_t,W_t\ra\in L^2(\ppi)$
is Lipschitz and satisfies
\begin{equation}\label{eq:leibniz_convect_integral_1}
L^2(\ppi)\text{-}\frac{\d}{\d t}\,\la V_t,W_t\ra
=\la\widetilde{\rm D}_\sppi V_t,W_t\ra+
\la V_t,\widetilde{\rm D}_\sppi W_t\ra
\quad\mbox{ for }\mathcal{L}^1\mbox{-a.e.\ }t\in[0,1].
\end{equation}
\end{proposition}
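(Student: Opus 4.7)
The plan is to reduce to generators of $\testvf$ by bilinearity and then invoke Lemma \ref{lem:lebniz_cov} together with the scalar Leibniz rule for $L^2(\ppi)$-valued Lipschitz curves. Since both sides of \eqref{eq:leibniz_convect_integral_1} are bilinear in $(V,W)$ and $\widetilde{\rm D}_\sppi$ is linear, it suffices to treat the special case $V_t=\varphi(t)\,\nchi_A\,\e_t^*v$ and $W_t=\psi(t)\,\nchi_B\,\e_t^*w$, where $\varphi,\psi\in{\rm LIP}([0,1])$, $A,B\in\mathscr{B}(\Gamma(\X))$ and $v,w\in\testv(\X)$.

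In this simple case I would first record the pointwise identity
\[
\la V_t,W_t\ra=\varphi(t)\,\psi(t)\,\nchi_{A\cap B}\,\la v,w\ra\circ\e_t,
\]
using the defining property $\la\e_t^*v,\e_t^*w\ra=\la v,w\ra\circ\e_t$ of the pullback (cf.\ the proof of Lemma \ref{lem:lebniz_cov}). Lemma \ref{lem:lebniz_cov} then gives, in the sense of $L^2(\ppi)$,
\[
\ddt\la v,w\ra\circ\e_t=\la{\rm Cov}_t(v),\e_t^*w\ra+\la\e_t^*v,{\rm Cov}_t(w)\ra\qquad\text{ for a.e.\ }t\in[0,1],
\]
together with the Lipschitz regularity of $t\mapsto\la v,w\ra\circ\e_t$. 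Multiplication by the bounded, time-independent factor $\nchi_{A\cap B}$ is a continuous linear operation on $L^2(\ppi)$, so it preserves Lipschitz regularity and commutes with differentiation. Further multiplication by the scalar Lipschitz function $\varphi(t)\psi(t)$ preserves the Lipschitz class in $L^2(\ppi)$, and the scalar-vector Leibniz rule yields
\[
\ddt\la V_t,W_t\ra=\big(\varphi'\psi+\varphi\psi'\big)\nchi_{A\cap B}\,\la v,w\ra\circ\e_t+\varphi\psi\,\nchi_{A\cap B}\big(\la{\rm Cov}_t(v),\e_t^*w\ra+\la\e_t^*v,{\rm Cov}_t(w)\ra\big).
\]

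Finally, I would expand $\la\widetilde{\rm D}_\sppi V_t,W_t\ra+\la V_t,\widetilde{\rm D}_\sppi W_t\ra$ directly from Definition \ref{def:convect}: the first summand contributes $\varphi'\psi\,\nchi_{A\cap B}\la v,w\ra\circ\e_t+\varphi\psi\,\nchi_{A\cap B}\la{\rm Cov}_t(v),\e_t^*w\ra$, and the second contributes the symmetric expression. Adding them recovers exactly the right-hand side computed above, proving \eqref{eq:leibniz_convect_integral_1} on generators and hence in general.

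There is no real obstacle here, beyond checking that the product and Leibniz rules genuinely take place at the level of $L^2(\ppi)$-valued curves rather than merely pointwise a.e. The verification is routine once one notes that the scalar factors $\varphi,\psi$ are Lipschitz and uniformly bounded, the factor $\nchi_{A\cap B}$ is bounded and time-independent, and the $L^2(\ppi)$-valued factor $t\mapsto\la v,w\ra\circ\e_t$ is Lipschitz by Lemma \ref{lem:lebniz_cov}; under these hypotheses the standard argument (approximate by difference quotients and dominate) yields the Leibniz identity in $L^2(\ppi)$ without additional work.
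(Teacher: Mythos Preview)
Your proof is correct and follows essentially the same approach as the paper: reduce by bilinearity to elementary generators $V_t=\varphi(t)\nchi_A\e_t^*v$, $W_t=\psi(t)\nchi_B\e_t^*w$, then combine Lemma~\ref{lem:lebniz_cov} with the scalar Leibniz rule. You have simply made explicit the expansion of $\la\widetilde{\rm D}_\sppi V_t,W_t\ra+\la V_t,\widetilde{\rm D}_\sppi W_t\ra$ and the justification of the $L^2(\ppi)$-level Leibniz rule that the paper leaves implicit.
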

\begin{proof}
By bilinearity, to prove \eqref{eq:leibniz_convect_integral_1}  is sufficient to consider the case $V_t=\varphi(t)\nchi_A\,\e_t^*v$ and 
$W_t=\psi(t)\,\nchi_{B}\,\e_t^*w$ for $v,w\in {\rm TestV}({\rm X})$. Lemma  \ref{lem:lebniz_cov} ensures that $t\mapsto \la\e_t^*v,\e_t^*w\ra=\la v,w\ra\circ\e_t\in L^2(\ppi)$ is Lipschitz and it is then clear that $t\mapsto \la V_t,W_t\ra=\nchi_{A\cap B}\varphi(t)\psi(t)\la\e_t^*v,\e_t^*w\ra$ is also Lipschitz. The identity \eqref{eq:leibniz_convect_integral_1} now follows from \eqref{eq:leibniz_cov_2} and the Leibniz rule.
%
\end{proof}
This last proposition allows to `integrate by parts' and extend the definition of convective derivative to `Sobolev vector fields along $\ppi$'.

Let us define the  \emph{support} ${\rm spt}(V)$ of a test vector field $V\in{\rm TestVF}(\ppi)$ as the closure of the set of $t$'s for which $V_t\neq 0$ and let us introduce the space of sections with compact support in $(0,1)$:
\[
\testvfc:=\big\{V\in\testvf\,:\,{\rm spt}(V)\subseteq(0,1)\big\}.
\]
A simple cut-off argument shows that $\testvfc$  is $\mathscr{L}^2(\ppi)$-dense in $\testvf$ and hence in $\mathscr L^2(\ppi)$.
\begin{definition}[The space \texorpdfstring{$\mathscr{W}^{1,2}(\boldsymbol{\pi})$}{W^1,2(pi)}]\label{def:w12}
The \emph{Sobolev space} $\mathscr{W}^{1,2}(\ppi)$ is the vector subspace of $\mathscr{L}^2(\ppi)$
consisting of all those $V\in\mathscr{L}^2(\ppi)$ such that there exists $Z\in\mathscr{L}^2(\ppi)$ satisfying
\begin{equation}\label{eq:W^1,2(pi)_1}
\int_0^1\!\!\!\int\la V_t,\widetilde{\rm D}_\sppi W_t\ra\,\d\ppi\,\d t=
-\int_0^1\!\!\!\int\la Z_t,W_t\ra\,\d\ppi\,\d t\quad\mbox{ for every }W\in{\rm TestVF}_c(\ppi).
\end{equation}
In this case the section $Z$, whose uniqueness is granted by density of $\testvfc$ in
$\mathscr{L}^2(\ppi)$, can be unambiguously denoted by ${\rm D}_\sppi V$ and called
\emph{convective derivative} of $V$.
We endow $\mathscr{W}^{1,2}(\ppi)$ with the norm ${\|\cdot\|}_{\mathscr{W}^{1,2}(\sppi)}$, defined by
\[
{\|V\|}_{\mathscr{W}^{1,2}(\sppi)}:=\sqrt{{\|V\|}_{\mathscr{L}^2(\sppi)}^2
+{\|{\rm D}_\sppi V\|}_{\mathscr{L}^2(\sppi)}^2}
\quad\mbox{ for every }V\in\mathscr{W}^{1,2}(\ppi).
\]
\end{definition}

This choice of terminology is consistent with that of Definition \ref{def:convect}:
\begin{proposition}\label{prop:consistency}
Let $V\in\testvf$. Then $V\in\mathscr{W}^{1,2}(\ppi)$ and 
${\rm D}_\sppi V=\widetilde{\rm D}_\sppi V$.
\end{proposition}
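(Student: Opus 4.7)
The plan is simply to verify the defining identity \eqref{eq:W^1,2(pi)_1} of $\mathscr{W}^{1,2}(\ppi)$ for the candidate $Z:=\widetilde{\rm D}_\sppi V$. Fix $W\in\testvfc$. By Proposition \ref{prop:leibniz_convect_integral} applied to the pair $(V,W)$, the map
\[
[0,1]\ni t\;\longmapsto\; \la V_t,W_t\ra\in L^2(\ppi)
\]
is Lipschitz with $L^2(\ppi)$-derivative $\la\widetilde{\rm D}_\sppi V_t,W_t\ra+\la V_t,\widetilde{\rm D}_\sppi W_t\ra$ for a.e.\ $t$. Since $L^2(\ppi)$ is reflexive, Theorem \ref{thm:acw} (together with the absolutely continuous representative proposition) gives that this curve is the integral of its derivative, so that
\[
\la V_1,W_1\ra-\la V_0,W_0\ra=\int_0^1\!\!\Big(\la\widetilde{\rm D}_\sppi V_t,W_t\ra+\la V_t,\widetilde{\rm D}_\sppi W_t\ra\Big)\,\d t\qquad\text{in }L^2(\ppi).
\]

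Now I would use that $W\in\testvfc$ has support compactly contained in $(0,1)$, so $W_0=W_1=0$ in $\e_0^*L^2(T\X)$ and $\e_1^*L^2(T\X)$ respectively, and hence the left hand side vanishes identically in $L^2(\ppi)$. Integrating the resulting identity against $\ppi$ (a bounded linear functional on $L^2(\ppi)$ since $\ppi$ is a probability measure) and applying Fubini's theorem to the joint integrand, which is legitimate because both $V,\widetilde{\rm D}_\sppi V,W,\widetilde{\rm D}_\sppi W\in\mathscr L^2(\ppi)$ and hence the pairings lie in $L^1(\ppi\times\mathcal L_1)$ by Cauchy--Schwarz, yields
\[
0=\int_0^1\!\!\!\int\la\widetilde{\rm D}_\sppi V_t,W_t\ra\,\d\ppi\,\d t+\int_0^1\!\!\!\int\la V_t,\widetilde{\rm D}_\sppi W_t\ra\,\d\ppi\,\d t,
\]
which is precisely \eqref{eq:W^1,2(pi)_1} with $Z=\widetilde{\rm D}_\sppi V$. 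Since $\widetilde{\rm D}_\sppi V\in\mathscr L^2(\ppi)$ was already observed right after Definition \ref{def:convect}, we conclude that $V\in\mathscr W^{1,2}(\ppi)$ and ${\rm D}_\sppi V=\widetilde{\rm D}_\sppi V$.

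The only delicate point is the passage from the pointwise/a.e.\ Leibniz rule to the integrated identity, which would be a routine fundamental-theorem-of-calculus argument if the curve took values in $\R$; here it takes values in a Hilbert space, so one needs precisely the combination of $L^2(\ppi)$-Lipschitz regularity, reflexivity of $L^2(\ppi)$, and the absolutely continuous representative theorem recalled at the start of the paper. Everything else is bookkeeping.
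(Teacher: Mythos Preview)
Your proof is correct and follows essentially the same approach as the paper's: apply Proposition \ref{prop:leibniz_convect_integral} to $V$ and a fixed $W\in\testvfc$, use the fundamental theorem of calculus, and observe that the boundary terms vanish because $W_0=W_1=0$. The only cosmetic difference is that the paper integrates over $\ppi$ first (working with the real-valued map $t\mapsto\int\la V_t,W_t\ra\,\d\ppi$) while you carry out the Bochner integration in $L^2(\ppi)$ and integrate against $\ppi$ afterward; both routes are valid and lead to the same identity.
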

\begin{proof}
Fix $W\in\testvfc$. We know from Proposition \ref{prop:leibniz_convect_integral} that
$[0,1]\ni t\mapsto\int\la V_t,W_t\ra\,\d\ppi$ is an absolutely continuous function, so that
\eqref{eq:leibniz_convect_integral_1} gives, after integration, that
$$0=\int\la V_1,W_1\ra\,\d\ppi-\int\la V_0,W_0\ra\,\d\ppi
=\int_0^1\!\!\!\int\la\widetilde{\rm D}_\sppi V_t,W_t\ra\,\d\ppi\,\d t+
\int_0^1\!\!\!\int\la V_t,\widetilde{\rm D}_\sppi W_t\ra\,\d\ppi\,\d t.$$
This proves that $V$ satisfies \eqref{eq:W^1,2(pi)_1} with $Z=\widetilde{\rm D}_\sppi V$.
\end{proof}
\begin{proposition}[Basic properties of \texorpdfstring{$\mathscr{W}^{1,2}(\ppi)$}{W_1,2(pi)}]
\label{prop:W12(ppi)_Hilbert}
The following hold:
\begin{itemize}
\item[i)] ${\rm D}_\sppi$ is a closed operator from $\mathscr{L}^2(\ppi)$ into itself, i.e.\ its graph is closed in the product space $\mathscr{L}^2(\ppi)\times\mathscr{L}^2(\ppi)$.
\item[ii)] $\mathscr{W}^{1,2}(\ppi)$ is a separable Hilbert space.
\item[iii)] Let $V,Z\in\mathscr L^2(\ppi)$. Then $V\in \mathscr{W}^{1,2}(\ppi)$ and $Z={\rm D}_\sppi V$ if and only if for every $W\in \testvf$ the map $t\mapsto\la V_t,W_t\ra$ belongs to $W^{1,1}([0,1],L^1(\ppi))$ with derivative given by
\begin{equation}
\label{eq:b4}
\frac{\d}{\d t} \la V_t,W_t\ra=\la V_t,{\rm D}_\sppi W_t\ra+\la Z_t,W_t\ra \qquad a.e.\ t.
\end{equation}
\end{itemize}
\end{proposition}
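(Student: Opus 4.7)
For (i), suppose $V_n \to V$ and ${\rm D}_\sppi V_n \to Z$ in $\mathscr{L}^2(\ppi)$. Both sides of the defining identity \eqref{eq:W^1,2(pi)_1} depend $\mathscr{L}^2(\ppi)$-continuously on the $V_n$-argument (with $W \in \testvfc$ fixed), so passing to the limit gives $V \in \mathscr{W}^{1,2}(\ppi)$ with ${\rm D}_\sppi V = Z$. For (ii), the ${\|\cdot\|}_{\mathscr{W}^{1,2}(\sppi)}$-norm is induced by the natural inner product $\la V,V'\ra_{\mathscr{L}^2(\sppi)} + \la {\rm D}_\sppi V,{\rm D}_\sppi V'\ra_{\mathscr{L}^2(\sppi)}$; completeness follows from (i), since the graph map $V \mapsto (V,{\rm D}_\sppi V)$ is an isometry of $\mathscr{W}^{1,2}(\ppi)$ onto a closed subspace of $\mathscr{L}^2(\ppi) \times \mathscr{L}^2(\ppi)$, and separability is inherited from Proposition \ref{prop:TestG(X)_dense_in_L2}.

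For the `if' direction of (iii), given $W \in \testvfc$ we have $W_0 = W_1 = 0$, so the absolutely continuous $L^1(\ppi)$-representative of $t \mapsto \la V_t,W_t\ra$ vanishes at the endpoints. Integrating the assumed identity \eqref{eq:b4} over $[0,1]$, pairing with $\ppi$, and using Proposition \ref{prop:consistency} to replace ${\rm D}_\sppi W$ by $\widetilde{\rm D}_\sppi W$ for $W \in \testvf$ yields
\[
0 = \int_0^1\!\!\int \la V_t,\widetilde{\rm D}_\sppi W_t\ra\,\d\ppi\,\d t + \int_0^1\!\!\int \la Z_t,W_t\ra\,\d\ppi\,\d t,
\]
which is precisely \eqref{eq:W^1,2(pi)_1} with $Z$ playing the role of ${\rm D}_\sppi V$.

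For the `only if' direction, set $Z := {\rm D}_\sppi V$ and fix arbitrary $W \in \testvf$, a Borel set $E \subseteq \Gamma(\X)$, and $\varphi \in C^\infty_c(0,1)$. The vector field $W^\varphi_t := \varphi(t)\,\nchi_E\,W_t$ lies in $\testvfc$; writing $W_t = \sum_i \psi_i(t)\,\nchi_{A_i}\,\e_t^*w_i$, so that $W^\varphi_t = \sum_i (\varphi\psi_i)(t)\,\nchi_{A_i \cap E}\,\e_t^*w_i$, Definition \ref{def:convect} together with the Leibniz rule $(\varphi\psi_i)' = \varphi'\psi_i + \varphi\psi_i'$ produces the pointwise identity
\[
\widetilde{\rm D}_\sppi W^\varphi_t = \varphi'(t)\,\nchi_E\,W_t + \varphi(t)\,\nchi_E\,\widetilde{\rm D}_\sppi W_t.
\]
Plugging $W^\varphi$ into \eqref{eq:W^1,2(pi)_1} and rearranging gives
\[
\int_0^1 \varphi'(t)\int_E \la V_t,W_t\ra\,\d\ppi\,\d t = -\int_0^1 \varphi(t)\int_E \big(\la V_t,\widetilde{\rm D}_\sppi W_t\ra + \la Z_t,W_t\ra\big)\,\d\ppi\,\d t,
\]
so that $t \mapsto \int_E \la V_t,W_t\ra\,\d\ppi$ is in $W^{1,1}(0,1)$ with the expected derivative. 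By arbitrariness of $E$, Proposition \ref{prop:carw1p} applied with $\B = L^1(\ppi)$ yields that $t \mapsto \la V_t,W_t\ra \in W^{1,1}([0,1],L^1(\ppi))$ with derivative given by \eqref{eq:b4}. The only step demanding attention is the product-rule identity for $\widetilde{\rm D}_\sppi W^\varphi$, whose unambiguous definition rests on Lemma \ref{le:perconv}; everything else is routine bookkeeping.
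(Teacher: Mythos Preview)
Your proof is correct and follows essentially the same route as the paper's. Parts (i) and (ii) are identical to the paper's argument; for (iii) you give a bit more detail than the paper (spelling out why the boundary terms vanish in the `if' direction and writing out the Leibniz computation for $\widetilde{\rm D}_\sppi W^\varphi$ explicitly), but the strategy---multiplying $W$ by $\varphi(t)\,\nchi_E$, reading off the scalar weak derivative of $t\mapsto\int_E\la V_t,W_t\ra\,\d\ppi$, and invoking Proposition~\ref{prop:carw1p}---is exactly the paper's.
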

\begin{proof}

\noindent{\bf (i)} 
Let ${(V_n)}\subseteq\mathscr{W}^{1,2}(\ppi)$ be a sequence such that $V_n\to V$ and ${\rm D}_\sppi V_n\to Z$ in $\mathscr{L}^2(\ppi)$ for some $V,Z\in \mathscr{L}^2(\ppi)$. Then for arbitrary  $W\in{\rm TestVF}_c(\ppi)$ we have
\[\begin{split}
\int_0^1\!\!\!\int\la V_t,{\rm D}_\sppi W_t\ra\,\d\ppi\,\d t
&=\lim_{n\to\infty}\int_0^1\!\!\!\int\la V^n_t,{\rm D}_\sppi W_t\ra\,\d\ppi\,\d t\\
&=-\lim_{n\to\infty}\int_0^1\!\!\!\int\la{\rm D}_\sppi V^n_t,W_t\ra\,\d\ppi\,\d t=-\int_0^1\!\!\!\int\la Z_t,W_t\ra\,\d\ppi\,\d t,
\end{split}\]
proving that  $V\in \mathscr W^{1,2}(\ppi)$ with ${\rm D}_\sppi V=Z$, which was the claim.

\noindent{\bf (ii)} Consequence of what just proved and the fact that the map
$$\mathscr{W}^{1,2}(\ppi)\ni V\quad\mapsto\quad(V,{\rm D}_\sppi V)\in\mathscr{L}^2(\ppi)\times\mathscr{L}^2(\ppi)$$
is an isometry, provided we endow $\mathscr{L}^2(\ppi)\times\mathscr{L}^2(\ppi)$,
with the (separable, by Proposition \ref{prop:TestG(X)_dense_in_L2}) norm ${\big\|(V,Z)\big\|}^2:={\|V\|}_{\mathscr{L}^2(\sppi)}^2
+{\|Z\|}_{\mathscr{L}^2(\sppi)}^2$. 

\noindent{\bf (iii)} The `if' trivially follows from \eqref{eq:b4} by integration. For the `only if', fix $W\in \testvf$ and let $\varphi\in C^1_c(0,1)$ and $\Gamma\subset\Gamma(\X)$ Borel. Then $t\mapsto \varphi(t)\nchi_\Gamma W_t$ is in $\testvfc$ and a direct computation shows that ${\rm D}_\sppi(\varphi \nchi_\Gamma W)_t=\varphi'(t)\nchi_\Gamma W_t+\varphi(t)\nchi_\Gamma {\rm D}_\sppi W_t$. Hence writing the defining property \eqref{eq:W^1,2(pi)_1} with $\varphi \nchi_\Gamma W$ in place of $W$ we obtain, after rearrangement, that
\[
\int_0^1\varphi'(t)\int_\Gamma \la V_t,W_t\ra\,\d\ppi\,\d t=-\int_0^1\varphi(t)\int_\Gamma\la V_t,{\rm D}_\sppi W_t\ra+\la Z_t,W_t\ra\,\d\ppi\,\d t.
\]
The arbitrariness of $\varphi,\Gamma$ and Proposition \ref{prop:carw1p} yield the claim.
\end{proof}

We just proved that $\testvf$ is contained in $\mathscr W^{1,2}(\ppi)$, but we don't know if it is dense. Hence the following definition is meaningful:
\begin{definition}[The space \texorpdfstring{$\mathscr{H}^{1,2}(\ppi)$}{H^1,2(pi)}]\label{def:h12}
 $\mathscr{H}^{1,2}(\ppi)$ is the $\mathscr{W}^{1,2}(\ppi)$-closure of $\testvf$.
\end{definition}
Clearly, $\mathscr{H}^{1,2}(\ppi)$ is a separable Hilbert space. A key feature of elements of  $\mathscr{H}^{1,2}(\ppi)$ is that they admit a continuous representative (much like Sobolev functions on the interval):
\begin{theorem}\label{thm:H12(ppi)_in_C(ppi)}
The inclusion $\testvf\hookrightarrow\mathscr{C}(\ppi)$ uniquely extends to a linear continuous and injective operator  $\iota:\mathscr{H}^{1,2}(\ppi)\to \mathscr{C}(\ppi)$.
\end{theorem}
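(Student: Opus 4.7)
The plan is to exploit the Leibniz rule from Proposition \ref{prop:leibniz_convect_integral} to establish the key \emph{a priori} bound
\[
\|V\|_{\mathscr{C}(\sppi)}\leq \sqrt 2\,\|V\|_{\mathscr{W}^{1,2}(\sppi)}\qquad\forall V\in\testvf,
\]
and then conclude by a standard density/extension argument together with the compatibility of the $\mathscr L^2$ and $\mathscr C$ norms.

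\textbf{Step 1: the key estimate.} Fix $V\in\testvf$. Applying Proposition \ref{prop:leibniz_convect_integral} with $W=V$ gives that $t\mapsto |V_t|^2\in L^1(\ppi)$ is absolutely continuous with derivative $2\la\widetilde{\rm D}_\sppi V_t,V_t\ra$ a.e.\ $t$. Integrating $\d\ppi$ and then in time yields, for every $s,t\in[0,1]$,
\[
[\![V]\!]_t^2=[\![V]\!]_s^2+2\int_s^t\!\!\int\la\widetilde{\rm D}_\sppi V_r,V_r\ra\,\d\ppi\,\d r.
\]
By Cauchy--Schwarz the rightmost term is at most $2\|\widetilde{\rm D}_\sppi V\|_{\mathscr L^2(\sppi)}\|V\|_{\mathscr L^2(\sppi)}$. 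Integrating in $s\in[0,1]$ and noting that $\int_0^1[\![V]\!]_s^2\,\d s=\|V\|_{\mathscr L^2(\sppi)}^2$, we obtain
\[
[\![V]\!]_t^2\leq \|V\|_{\mathscr L^2(\sppi)}^2+2\|\widetilde{\rm D}_\sppi V\|_{\mathscr L^2(\sppi)}\|V\|_{\mathscr L^2(\sppi)}\leq 2\|V\|_{\mathscr{W}^{1,2}(\sppi)}^2,
\]
valid for every $t\in[0,1]$. Taking the supremum in $t$ gives the asserted bound. Combined with Proposition \ref{prop:consistency}, the inclusion $\testvf\hookrightarrow \mathscr C(\ppi)$ is linear and continuous with respect to the $\mathscr W^{1,2}(\ppi)$ and $\mathscr C(\ppi)$ norms.

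\textbf{Step 2: extension.} Since $\mathscr C(\ppi)$ is a Banach space and $\testvf$ is $\mathscr W^{1,2}(\ppi)$-dense in $\mathscr{H}^{1,2}(\ppi)$ by Definition \ref{def:h12}, the bounded linear transformation theorem yields a unique linear continuous extension $\iota:\mathscr{H}^{1,2}(\ppi)\to\mathscr C(\ppi)$.

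\textbf{Step 3: injectivity.} Here the point is that the extension $\iota(V)$ agrees a.e.\ in time with $V$ itself. Indeed, if $V_n\in\testvf$ converges to $V$ in $\mathscr{H}^{1,2}(\ppi)$, then by continuity $V_n\to\iota(V)$ in $\mathscr C(\ppi)$; but $\|\cdot\|_{\mathscr L^2(\sppi)}\leq\|\cdot\|_{\mathscr C(\sppi)}$, so also $V_n\to\iota(V)$ in $\mathscr L^2(\ppi)$, while $V_n\to V$ in $\mathscr L^2(\ppi)$ by construction. Hence $\iota(V)_t=V_t$ for a.e.\ $t$. If now $\iota(V)=0$ in $\mathscr C(\ppi)$, i.e.\ $\iota(V)_t=0$ in $\e_t^*L^2(T\X)$ for every $t$, then $V_t=0$ for a.e.\ $t$, so $V=0$ in $\mathscr L^2(\ppi)$ and therefore in $\mathscr{H}^{1,2}(\ppi)$.

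The only genuinely non-routine step is Step 1, and even there the main work has already been done in Proposition \ref{prop:leibniz_convect_integral}; the argument above is just the classical ``$\int_s^t\frac{\d}{\d r}\|\cdot\|^2$'' trick, whose only subtle point is integrating the base-point $s$ to trade a pointwise estimate for an $\mathscr L^2$-type control.
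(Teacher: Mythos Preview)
Your proof is correct and follows essentially the same approach as the paper: the key estimate $\|V\|_{\mathscr{C}(\sppi)}\leq\sqrt{2}\,\|V\|_{\mathscr{W}^{1,2}(\sppi)}$ is obtained by applying the Leibniz rule with $W=V$, integrating in time, and then averaging over the base point $s$, exactly as the paper does. Your injectivity argument is a slight (and arguably cleaner) variant: instead of extracting an a.e.\ convergent subsequence via Proposition~\ref{prop:L2(pi)_Banach}, you use the norm comparison $\|\cdot\|_{\mathscr L^2(\sppi)}\leq\|\cdot\|_{\mathscr C(\sppi)}$ to identify $\iota(V)$ with $V$ directly in $\mathscr L^2(\ppi)$.
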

\begin{proof}
We claim that
\begin{equation}
\label{eq:b5}
{\|V\|}_{\mathscr{C}(\sppi)}\leq\sqrt{2}\,{\|V\|}_{\mathscr{W}^{1,2}(\sppi)}\qquad\forall V\in\testvf.
\end{equation}
By the density of $\testvf$ in $\mathscr{H}^{1,2}(\ppi)$ this will be enough to obtain the existence of $\iota$. Thus let $V\in
\testvf$, pick $W=V$ in \eqref{eq:leibniz_convect_integral_1} and integrate in $[t_1,t_2]\subset[0,1]$ and w.r.t.\ $\ppi$ to obtain
\[
\begin{split}\big|{[\![V]\!]}_{t_2}^2-{[\![V]\!]}_{t_1}^2\big|&=2\Big|\int_{t_1}^{t_2}\!\!\!\int\la V_t,{\rm D}_\sppi V_t\ra\,\d\ppi\,\d t \Big|\\
&\leq
2\int_{t_1}^{t_2}\!\!\!\int\big|V_t\big|\,\big|{\rm D}_\sppi V_t\big|\,\d\ppi\,\d t
\leq{\|V\|}_{\mathscr{L}^2(\sppi)}^2+{\|{\rm D}_\sppi V\|}_{\mathscr{L}^2(\sppi)}^2.
\end{split}\]
Hence for any $t\in[0,1]$ one has
$$
{[\![V]\!]}_t^2=\int_0^1{[\![V]\!]}_t^2\,\d s
\leq\int_0^1\big|{[\![V]\!]}_t^2-{[\![V]\!]}_s^2\big|\,\d s+{\|V\|}_{\mathscr{L}^2(\sppi)}^2
\leq 2\,{\|V\|}_{\mathscr{W}^{1,2}(\sppi)}^2,
$$
which is our claim \eqref{eq:b5}.

To prove injectivity, let $V\in\mathscr{H}^{1,2}(\ppi)$ be such that $\iota(V)=0$.
Choose a sequence ${(V_n)}\subseteq\testvf$ which is $\mathscr W^{1,2}(\ppi)$-converging to $V$ and notice that, up to pass to a subsequence and using Proposition \ref{prop:L2(pi)_Banach}, we can assume that $V^n_t\to V_t$ for $\mathcal{L}^1$-a.e.\ $t\in[0,1]$. By continuity of the operator $\iota$, one also has
${\|V^n\|}_{\mathscr{C}(\sppi)}={\big\|\iota(V^n)-\iota(V)\big\|}_{\mathscr{C}(\sppi)}\to 0$ and thus
in particular $V^n_t\to 0$ for all $t\in[0,1]$. Therefore $V_t=0$ for $\mathcal{L}^1$-a.e.\ $t\in[0,1]$,
yielding the required injectivity of $\iota$.
\end{proof}

Whenever we will consider an element $V$ of $\mathscr{H}^{1,2}(\ppi)$, we will always implicitly
refer to its unique continuous representative $\iota(V)\in\mathscr{C}(\ppi)$.

Among the several properties of the test sections that can be carried over to the
elements of $\mathscr{H}^{1,2}(\ppi)$, the most important one is the Leibniz
formula for the convective derivatives:
\begin{proposition}[Leibniz formula for ${\rm D}_\sppi$]\label{prop:Leibniz_in_H}
Let $V\in\mathscr{W}^{1,2}(\ppi)$ and $W\in\mathscr{H}^{1,2}(\ppi)$. Then the function $t\mapsto \la V_t,W_t\ra$ is in $W^{1,1}([0,1],L^1(\ppi))$ and its derivative is given by
\[
\frac{\d}{\d t} \la V_t,W_t\ra 
=\la{\rm D}_\sppi V_t,W_t\ra+
\la V_t,{\rm D}_\sppi W_t\ra
\quad\mbox{ for }\mathcal{L}^1\mbox{-a.e.\ }t\in[0,1].
\]
\end{proposition}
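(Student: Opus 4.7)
\medskip

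\textbf{Plan of proof.} The strategy is an approximation argument: approximate $W$ in $\mathscr{W}^{1,2}(\ppi)$-norm by test vector fields $W_n\in\testvf$, apply item (iii) of Proposition \ref{prop:W12(ppi)_Hilbert} to the pairs $(V,W_n)$, and then pass to the limit in the resulting $W^{1,1}([0,1],L^1(\ppi))$-statements using the closedness of the derivative operator.

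First, by definition of $\mathscr{H}^{1,2}(\ppi)$ we can pick $(W_n)\subset \testvf$ such that $W_n\to W$ in $\mathscr{W}^{1,2}(\ppi)$, and in particular $W_n\to W$ and ${\rm D}_\sppi W_n\to {\rm D}_\sppi W$ in $\mathscr{L}^2(\ppi)$. Since $V\in\mathscr{W}^{1,2}(\ppi)$ and each $W_n$ lies in $\testvf$, Proposition \ref{prop:W12(ppi)_Hilbert}(iii) (taking $Z={\rm D}_\sppi V$) yields that for each $n$ the map $t\mapsto \la V_t,W_{n,t}\ra$ is in $W^{1,1}([0,1],L^1(\ppi))$ with derivative equal to $\la V_t,{\rm D}_\sppi W_{n,t}\ra+\la {\rm D}_\sppi V_t,W_{n,t}\ra$ for a.e.\ $t$.

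Next, I would pass to the limit in $L^1([0,1],L^1(\ppi))$. By Cauchy--Schwarz pointwise in $t$ followed by Cauchy--Schwarz in $L^2(0,1)$, for any $A,B\in\mathscr{L}^2(\ppi)$ one has
\[
\int_0^1\!\!\int|\la A_t,B_t\ra|\,\d\ppi\,\d t\leq \|A\|_{\mathscr{L}^2(\sppi)}\,\|B\|_{\mathscr{L}^2(\sppi)}.
\]
Applying this successively to $(A,B)=(V,W_n-W)$, $({\rm D}_\sppi V,W_n-W)$ and $(V,{\rm D}_\sppi W_n-{\rm D}_\sppi W)$ shows that, as $n\to\infty$,
\[
\la V_\cdot,W_{n,\cdot}\ra\to \la V_\cdot,W_\cdot\ra\quad\text{and}\quad \la V_\cdot,{\rm D}_\sppi W_{n,\cdot}\ra+\la {\rm D}_\sppi V_\cdot,W_{n,\cdot}\ra\to \la V_\cdot,{\rm D}_\sppi W_\cdot\ra+\la {\rm D}_\sppi V_\cdot,W_\cdot\ra,
\]
with convergence in $L^1([0,1],L^1(\ppi))$. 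The target vector $\la V_\cdot,{\rm D}_\sppi W_\cdot\ra+\la {\rm D}_\sppi V_\cdot,W_\cdot\ra$ is in $L^1([0,1],L^1(\ppi))$ by the same Cauchy--Schwarz bound.

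Finally, I would invoke the closedness of the weak derivative for Banach-valued Sobolev maps: if $y_n\to y$ and $y_n'\to z$ in $L^1([0,1],\B)$ for some Banach space $\B$, then $y\in W^{1,1}([0,1],\B)$ with $y'=z$. This is immediate from the integration-by-parts definition by passing to the limit against any $\varphi\in C^\infty_c(0,1)$ (or, equivalently, from Proposition \ref{prop:carw1p} applied with $\B=L^1(\ppi)$: the displayed convergences combined with item (iii) of Proposition \ref{prop:W12(ppi)_Hilbert} give the scalar $W^{1,1}$-property testing against every $\ell\in (L^1(\ppi))'$, i.e.\ against every Borel set $\Gamma\subset\Gamma(\X)$). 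This yields the claim. There is no serious obstacle: the argument is essentially a soft closure statement, the only mild care being the correct use of the Banach-valued $W^{1,1}$ framework and the Cauchy--Schwarz manipulations in the mixed $L^2\times L^2$ setting.
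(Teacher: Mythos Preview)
Your argument is correct and follows exactly the paper's approach: reduce to $W\in\testvf$ via Proposition \ref{prop:W12(ppi)_Hilbert}(iii), then approximate $W$ in $\mathscr{W}^{1,2}(\ppi)$ and pass to the limit in the distributional formulation of the derivative using the Cauchy--Schwarz bounds. One minor quibble: the parenthetical appeal to Proposition \ref{prop:carw1p} with $\B=L^1(\ppi)$ does not quite work as stated, since the span of characteristic functions is not dense in $(L^1(\ppi))'=L^\infty(\ppi)$; your primary route through the integration-by-parts definition is unaffected and is precisely what the paper uses.
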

\begin{proof}
For $W\in \testvf$ the claim is a direct consequence of point $(iii)$ in Proposition \ref{prop:W12(ppi)_Hilbert}. The general case can be achieved by approximation noticing that the simple inequalities
\[
\begin{split}
\big\|\la V_t,W_t\ra\big\|_{L^1(\sppi\times\mathcal L_1)}&\leq \|V\|_{\mathscr L^2(\sppi)}\|W\|_{\mathscr L^2(\sppi)},\\
\big\|\la{\rm D}_\sppi V_t,W_t\ra+\la V_t,{\rm D}_\sppi W_t\ra\big\|_{L^1(\sppi\times\mathcal L_1)}&\leq 2 \|V\|_{\mathscr W^{1,2}(\sppi)}\|W\|_{\mathscr W^{1,2}(\sppi)},
\end{split}
\]
allow to pass to the limit in the distributional formulation of $\frac\d{\d t} \la V_t,W_t\ra$ as $W$ varies in $\mathscr H^{1,2}(\ppi)$.
\end{proof}
In the next proposition we collect some examples of elements of $\mathscr H^{1,2}(\ppi)$:
\begin{proposition}\label{prop:exh}
Let $\ppi$ be a Lipschitz test plan. Then:
\begin{itemize}
\item[i)] For every $w\in H^{1,2}_C(T\X)$ the vector field $t\mapsto W_t:=\e_t^*w$ belongs to $\mathscr H^{1,2}(\ppi)$ and 
\begin{equation}
\label{eq:derbas}
{\rm D}_\sppi W_t={\rm Cov}_t(w)\qquad  a.e.\ t.
\end{equation}
\item[ii)] Let $W\in\mathscr H^{1,2}(\ppi)$ be such that $|W|,|{\rm D}_\sppi W|\in L^\infty(\ppi\times\mathcal L_1)$ and $a\in W^{1,2}([0,1],L^2(\ppi))$. Then $aW\in\mathscr H^{1,2}(\ppi)$ with 
\begin{equation}
\label{eq:leiba}
{\rm D}_\sppi(aW)_t=a_t'W_t+a_t{\rm D}_\sppi W_t\qquad  a.e.\ t.
\end{equation}
Moreover, if $W\in\mathscr C(\ppi)$ and $a\in AC^2([0,1],L^2(\ppi))$, then $aW\in\mathscr C(\ppi)$.
\end{itemize}
\end{proposition}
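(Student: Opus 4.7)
For part $(i)$, I would approximate $w \in H^{1,2}_C(T\X)$ by a sequence $(w_n) \subset {\rm TestV}(\X)$ in the $W^{1,2}_C$-norm. Each $W^{(n)}_t := \e_t^* w_n$ is a test vector field (the trivial case of $\testvf$ with $\varphi \equiv 1$ and $A = \Gamma(\X)$), so Proposition \ref{prop:consistency} together with Definition \ref{def:convect} give $W^{(n)} \in \mathscr{W}^{1,2}(\ppi)$ with ${\rm D}_\sppi W^{(n)}_t = {\rm Cov}_t(w_n)$. The bounded-compression property $\e_*(\ppi \times \mathcal{L}_1) \leq \sfC(\ppi)\mm$ yields $\|W^{(n)} - W\|_{\mathscr{L}^2(\sppi)}^2 \leq \sfC(\ppi)\|w_n - w\|_{L^2(T\X)}^2$, while the bound established in Proposition \ref{prop:basecov} yields $\|{\rm Cov}_\sppi(w_n) - {\rm Cov}_\sppi(w)\|_{\mathscr{L}^2(\sppi)}^2 \leq \sfC(\ppi)\sfL(\ppi)^2\|w_n - w\|_{W^{1,2}_C(T\X)}^2$. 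Thus $W^{(n)} \to W$ in $\mathscr{W}^{1,2}(\ppi)$, and both the closedness of ${\rm D}_\sppi$ (Proposition \ref{prop:W12(ppi)_Hilbert}) and $\mathscr{H}^{1,2}$-membership follow at once.

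For the first part of $(ii)$ the strategy is a two-layer approximation. First, for any $b \in {\rm LIP}([0,1])$ and $A \in \mathscr{B}(\Gamma(\X))$, one approximates $W$ by $(W^{(n)}) \subset \testvf$ in $\mathscr{W}^{1,2}(\ppi)$ and observes that $b\nchi_A W^{(n)} \in \testvf$: multiplication by $b\nchi_A$ only refines the partition and the Lipschitz coefficients in the representation of $W^{(n)}$. The explicit formula \eqref{eq:convect} then gives $\widetilde{\rm D}_\sppi(b\nchi_A W^{(n)})_t = b'(t)\nchi_A W^{(n)}_t + b(t)\nchi_A \widetilde{\rm D}_\sppi W^{(n)}_t$, and passing to the limit (using $b, b' \in L^\infty$) places $b\nchi_A W$ in $\mathscr{H}^{1,2}(\ppi)$ with the Leibniz rule; linearity extends this to all simple functions $f = \sum_i b_i(t)\nchi_{A_i}$ with $b_i \in {\rm LIP}([0,1])$. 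Second, I would approximate the given $a$ by such simple functions $a_n$ in the $W^{1,2}([0,1], L^2(\ppi))$-norm, and leverage the hypotheses $|W|, |{\rm D}_\sppi W| \in L^\infty(\ppi \times \mathcal{L}_1)$ to upgrade the $L^2$-convergence of $a_n \to a,\ a_n' \to a'$ to $\mathscr{L}^2(\ppi)$-convergence of $a_n W \to aW$ and $a_n' W + a_n {\rm D}_\sppi W \to a' W + a {\rm D}_\sppi W$; closure of $\mathscr{H}^{1,2}(\ppi)$ in $\mathscr{W}^{1,2}(\ppi)$ then delivers the conclusion. The required density of simple Lipschitz-in-time functions in $W^{1,2}([0,1], L^2(\ppi))$ I would obtain by first applying conditional expectations $\mathbb{E}_\sppi[\,\cdot\,|\mathcal{F}_N]$ with respect to finite Borel $\sigma$-algebras exhausting the $\ppi$-completed Borel $\sigma$-algebra of $\Gamma(\X)$ (these commute with the time derivative and converge strongly in $L^2(\ppi)$ by martingale convergence, hence in $W^{1,2}([0,1], L^2(\ppi))$ by dominated convergence), and then approximating each of the finitely many resulting scalar $W^{1,2}(0,1)$ functions on the atoms by Lipschitz ones plus a diagonal subsequence.

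For the ``moreover'' continuity statement I would verify the two defining conditions of $\mathscr{C}(\ppi)$ directly. For any $W' \in \testvfn$ the map $t \mapsto \la W_t, W'_t\ra \in L^1(\ppi)$ is continuous: true for $W \in \testvf$ by Proposition \ref{prop:test_curves_continuous}, and extending to $W \in \mathscr{C}(\ppi)$ by the $\mathscr{C}(\ppi)$-density of $\testvf$ combined with $|W'| \in L^\infty$; since both $|W|$ and $|W'|$ are bounded this continuity upgrades to $L^2(\ppi)$-continuity. The splitting $a_t\la W_t, W'_t\ra - a_{t_0}\la W_{t_0}, W'_{t_0}\ra = (a_t - a_{t_0})\la W_t, W'_t\ra + a_{t_0}(\la W_t, W'_t\ra - \la W_{t_0}, W'_{t_0}\ra)$ together with Cauchy--Schwarz in $L^2(\ppi)$ then delivers continuity of $t \mapsto \int a_t\la W_t, W'_t\ra\,\d\ppi$. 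A similar splitting handles $[\![aW]\!]_t^2 = \int|a_t|^2|W_t|^2\,\d\ppi$: the $(|a_t|^2 - |a_{t_0}|^2)|W_t|^2$-piece is estimated using the $L^2$-continuity of $a$ and the bound on $|W|$, whereas the $|a_{t_0}|^2(|W_t|^2 - |W_{t_0}|^2)$-piece I would handle by truncating $|a_{t_0}|^2 \in L^1(\ppi)$ at a large height $K$, using the uniform bound $||W_t|^2 - |W_{t_0}|^2| \leq 2\||W|\|_\infty^2$ and Corollary \ref{cor:contnorm} (which gives $L^1(\ppi)$-continuity of $t \mapsto |W_t|^2$) below the cutoff, and equi-integrability of $|a_{t_0}|^2$ above it. The main technical obstacle I foresee is the density of simple Lipschitz-in-time functions in $W^{1,2}([0,1], L^2(\ppi))$ just discussed; the remaining arguments are routine applications of closedness of ${\rm D}_\sppi$, the product rule on $\testvf$, and dominated/equi-integrable convergence.
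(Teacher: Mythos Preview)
Your proposal is correct and follows essentially the same route as the paper. Part~(i) is identical: approximate by ${\rm TestV}(\X)$, use the same $\sfC(\ppi)$ and $\sfC(\ppi)\sfL(\ppi)^2$ bounds, and close. Part~(ii) is the same two-layer scheme: first extend from $W\in\testvf$ to $W\in\mathscr H^{1,2}(\ppi)$ for $a$ of the ``simple Lipschitz-in-time'' form using $\|a\|_{L^\infty}$ bounds, then approximate general $a$ using the $\||W|\|_{L^\infty},\||{\rm D}_\sppi W|\|_{L^\infty}$ bounds and closure. The only visible difference is in the density step: the paper builds the approximants via averaging operators $P_n^N$ over partitions of $\supp(\ppi)$ into sets of diameter $\leq 1/n$ (so convergence is checked first on $C_b(\Gamma(\X))$ and then extended by uniform boundedness), whereas you invoke conditional expectations onto increasing finite $\sigma$-algebras and martingale convergence. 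These are the same mechanism in different clothing; both produce elements of the form $\sum_i g_i\nchi_{E_i}$ with $g_i\in W^{1,2}(0,1)$, which are then mollified to Lipschitz. For the ``moreover'' clause the paper simply declares it obvious, while your truncation-plus-Corollary~\ref{cor:contnorm} argument supplies the details; note that the $|W|\in L^\infty$ hypothesis you use there is indeed available (without it $a_tW_t$ need not even lie in $\e_t^*L^2(T\X)$).
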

\begin{proof}

\noindent{\bf (i)} If $w\in {\rm TestV}(\X)$ we have that $W\in \testvf$ by definition and in this case formula \eqref{eq:derbas}  holds by the definition \eqref{eq:convect} and Proposition \ref{prop:consistency}. The general case can then be obtained by approximating $w$ with vector fields in ${\rm TestV}(\X)$ w.r.t.\ the $ W_C^{1,2}(T\X)$ topology,  using the bounds
\[
\begin{split}
\int_0^1\!\!\!\int{\big|\e_t^*(v)\big|}^2\,\d\ppi\,\d t
&\overset{\phantom{\eqref{eq:b3}}}{=}\int_0^1\!\!\!\int{|v|}^2\circ\e_t\,\d\ppi\,\d t
\leq\sfC(\ppi)\,{\|v\|}^2_{W^{1,2}_C(T{\rm X})},\\
\int_0^1\!\!\!\int{\big|{{\rm Cov}_\sppi(v)}_t\big|}^2\,\d\ppi\,\d t
&\overset{\eqref{eq:b3}}{\leq}
\sfC(\ppi)\,\sfL(\ppi)^2\,{\|v\|}^2_{W^{1,2}_C(T{\rm X})}
\end{split}
\]
and  recalling the closure of ${\rm D}_\sppi$.

\noindent{\bf (ii)} The claim about continuity is obvious, so we concentrate on the other one. Assume at first that $a$ belongs to the space $\mathcal A$ defined as
\[
\mathcal A:=\Big\{\sum_{i=1}^n\varphi_i\nchi_{E_i}\,:\,n\in\N, \ \varphi_i\in{\rm LIP}([0,1]),\ (E_i) \text{ Borel partition of }\Gamma(\X) \Big\}
\]
and that $W\in\testvf$. In this case $aW$ belongs to $\testvf$ as well and formula \eqref{eq:leiba} is a direct consequence of the definitions. Then using the trivial bounds
\[
\begin{split}
\|aW\|_{\mathscr L^2(\sppi)}&\leq \|a\|_{L^\infty(\sppi\times\mathcal L_1)}\|W\|_{\mathscr L^2(\sppi)},\\
\|a'W+a{\rm D}_\sppi W\|_{\mathscr L^2(\sppi)}&\leq\big(\|a\|_{L^\infty(\sppi\times\mathcal L_1)}+\|a'\|_{L^\infty(\sppi\times\mathcal L_1)}\big)\|W\|_{\mathscr W^{1,2}(\sppi)},
\end{split}
\]
the $\mathscr W^{1,2}(\ppi)$-density of $\testvf$ in $\mathscr H^{1,2}(\ppi)$ and the closure of ${\rm D}_\sppi$, we conclude that $aW\in\mathscr H^{1,2}(\ppi)$ for every $a\in \mathcal A$ and $W\in\mathscr H^{1,2}(\ppi)$ and that \eqref{eq:leiba} holds in this case.

Now let $W$ be as in the assumptions and notice that we also have the bounds
\[
\begin{split}
\|aW\|_{\mathscr L^2(\sppi)}&\leq \|a\|_{L^2([0,1],L^2(\sppi))}\||W|\|_{L^\infty(\sppi\times \mathcal L_1)},\\
\|a'W+a{\rm D}_\sppi W\|_{\mathscr L^2(\sppi)}&\leq \|a\|_{W^{1,2}([0,1],L^2(\sppi))}\big(\||W|\|_{L^\infty(\sppi\times \mathcal L_1)}+\||{\rm D}_\sppi W|\|_{L^\infty(\sppi\times \mathcal L_1)}\big),
\end{split}
\]
therefore using again the closure of ${\rm D}_\sppi$, to conclude it is sufficient to prove that $\mathcal A$ is dense in $W^{1,2}([0,1],L^2(\ppi))$. To this aim we argue as follows: for every $n\in\N$ let $(E^n_i)_{i\in\N}$ be a Borel partition of $\supp(\ppi)\subset \Gamma(\X)$ made of sets with positive $\ppi$-measure and diameter $\leq \frac1n$. Then for every $n,N\in\N$ let $P_n^N:L^2(\ppi)\to L^2(\ppi)$ be defined by
\[
P_n^N(f):=\sum_{i=1}^N\nchi_{E^n_i}\frac1{\ppi(E^n_i)}\int_{E^n_i}f\,\d\ppi.
\]
It is clear that $P_n^N$ has operator norm $\leq 1$ for every $n,N\in\N$ and an application of the dominated convergence theorem shows that 
\begin{equation}
\label{eq:limnN}
\lim_n\lim_N P_n^N(f)=f
\end{equation}
for every $f\in C_b\big(\Gamma(\X)\big)$, the limits being intended in $L^2(\ppi)$. Therefore  \eqref{eq:limnN} also holds for every $f\in L^2(\ppi)$. The linearity and continuity of $P_n^N$ also grants that if $t\mapsto a_t$ belongs to $W^{1,2}([0,1],L^2(\ppi))$, then also $t\mapsto P_n^N(a)_t:=P_n^N(a_t)$ belongs to  $W^{1,2}([0,1],L^2(\ppi))$ with 
\begin{equation}
\label{eq:pmder}
\big(P_n^N(a)\big)'_t=P_n^N(a'_t)\qquad  a.e.\ t.
\end{equation}
All these considerations imply that 
\[
\lim_n\lim_N P_n^N(a)=a\qquad in\ W^{1,2}([0,1],L^2(\ppi))
\]
for every $a\in W^{1,2}([0,1],L^2(\ppi))$ and thus to conclude it is sufficient to prove that $P_n^N(a)$ belongs to the $W^{1,2}([0,1],L^2(\ppi))$-closure of $\mathcal A$ for every $n,N\in \N$ and $a\in W^{1,2}([0,1],L^2(\ppi))$. 

It is clear by construction and \eqref{eq:pmder} that $P_n^N(a)=\sum_{i=1}^Ng_i\nchi_{E^n_i}$ for some $g_i\in W^{1,2}([0,1])$. Now for every $i=1,\ldots,N$ find $(g_{i,j})\subset{\rm LIP}([0,1])$ which $W^{1,2}([0,1])$-converges to $g_i$ and notice that
\[
\Big\|\sum_{i=1}^N(g_{i,j}-g_i)\nchi_{E^n_i}\Big\|^2_{W^{1,2}([0,1],L^2(\sppi))}=\sum_{i=1}^N\ppi(E^n_i)\|g_{i,j}-g_i\|_{W^{1,2}([0,1])}^2\quad\to\quad0\qquad\text{ as }j\to\infty.
\]
Since $\sum_{i=1}^Ng_{i,j}\nchi_{E^n_i}\in\mathcal A$ for every $j$, the proof is finished.
\end{proof}

\section{Parallel transport on {\sf RCD} spaces}
\subsection{Definition and basic properties of parallel transport}
\subsubsection{Definition and uniqueness}
%
%
We shall frequently use the fact that, since $\mathscr{H}^{1,2}(\ppi)$ is continuously embedded into $\mathscr{C}(\ppi)$ by Theorem \ref{thm:H12(ppi)_in_C(ppi)},  any vector field $V\in\mathscr{H}^{1,2}(\ppi)$ has pointwise values $V_t\in\e_t^*L^2(T\X)$ defined at every time $t\in[0,1]$.
\begin{definition}[Parallel transport]\label{def:pt}
Let $K\in\R$, $(\X,\sfd,\mm)$  an $\mathsf{RCD}(K,\infty)$ space and  $\ppi$ be a Lipschitz test plan on $\X$.  A \emph{parallel transport along $\ppi$} is an element $V\in\mathscr H^{1,2}(\ppi)$ such that ${\rm D}_\sppi V=0$.
\end{definition}
The linearity of the requirement ${\rm D}_\sppi V=0$ ensures that the set of parallel transports along $\ppi$ is a vector space. From Proposition \ref{prop:Leibniz_in_H} we deduce the following simple but crucial result:
\begin{proposition}[Norm preservation]\label{prop:normpres}
Let $V$ be a parallel transport along the Lipschitz test plan $\ppi$. Then $t\mapsto |V_t|^2\in L^1(\ppi)$ is constant.
\end{proposition}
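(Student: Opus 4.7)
The plan is to apply the Leibniz formula from Proposition \ref{prop:Leibniz_in_H} with $W := V$. Since $\mathscr{H}^{1,2}(\ppi) \subset \mathscr{W}^{1,2}(\ppi)$, both $V$ as the first and second entry satisfy the hypotheses, and we obtain that $t \mapsto \langle V_t, V_t\rangle = |V_t|^2$ lies in $W^{1,1}([0,1], L^1(\ppi))$ with
\[
\frac{\d}{\dt}\,|V_t|^2 = \langle {\rm D}_\sppi V_t, V_t \rangle + \langle V_t, {\rm D}_\sppi V_t \rangle = 0 \qquad \text{for a.e. } t \in [0,1],
\]
the last equality coming from the assumption ${\rm D}_\sppi V = 0$.

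From this distributional identity, combined with the absolutely continuous representative result, we deduce that the map $t \mapsto |V_t|^2 \in L^1(\ppi)$ is a.e.\ equal to a constant (as an element of $L^1(\ppi)$).

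Finally, to upgrade the conclusion from ``a.e.\ constant'' to ``constant at every $t \in [0,1]$'', I would invoke the continuity of the map: by Theorem \ref{thm:H12(ppi)_in_C(ppi)} we may identify $V$ with its unique continuous representative $\iota(V) \in \mathscr{C}(\ppi)$, and then Corollary \ref{cor:contnorm} ensures that $t \mapsto |V_t|^2 \in L^1(\ppi)$ is continuous. A continuous $L^1(\ppi)$-valued function that agrees almost everywhere with a constant must coincide with that constant everywhere, yielding the claim.

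I do not expect a significant obstacle here: this is essentially a one-line application of Leibniz plus the continuous embedding $\mathscr{H}^{1,2}(\ppi) \hookrightarrow \mathscr{C}(\ppi)$. The only subtlety worth stating explicitly is the passage from an a.e.\ identity (which is what the $W^{1,1}$ formulation directly gives) to a pointwise one, which is precisely what the continuous representative machinery developed in the previous section is designed to handle.
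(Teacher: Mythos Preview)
Your proof is correct and follows essentially the same approach as the paper: apply Proposition~\ref{prop:Leibniz_in_H} with $W=V$ to get $\frac{\d}{\d t}|V_t|^2=2\la{\rm D}_\sppi V_t,V_t\ra=0$, and combine this with the continuity from Corollary~\ref{cor:contnorm} to conclude. The only difference is the order in which you invoke the two ingredients, which is immaterial.
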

\begin{proof}
We know from Corollary \ref{cor:contnorm}  that $t\mapsto |V_t|^2\in L^1(\ppi)$ is continuous. Hence the choice $W=V$ in Proposition \ref{prop:Leibniz_in_H} tells that such map is absolutely continuous with derivative given by
\[
\frac\d{\d t}|V_t|^2=2\la{\rm D}_\sppi V_t,V_t\ra=0,\qquad a.e.\ t.
\]
This is sufficient to conclude.
\end{proof}
Linearity and norm preservation imply uniqueness:
\begin{corollary}[Uniqueness of parallel transport]\label{cor:uni}
Let $\ppi$ be a Lipschitz test plan and $V_1,V_2$ two parallel transports along it such that for some $t_0\in[0,1]$ it holds $V_{1,t_0}=V_{2,t_0}$. Then $V_1=V_2$. 
\end{corollary}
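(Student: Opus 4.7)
The plan is to exploit the linearity of the parallel transport condition together with the norm preservation property just established. Set $V := V_1 - V_2$. Since $\mathscr{H}^{1,2}(\ppi)$ is a vector space and ${\rm D}_\sppi$ is a linear operator, $V$ belongs to $\mathscr{H}^{1,2}(\ppi)$ and satisfies ${\rm D}_\sppi V = {\rm D}_\sppi V_1 - {\rm D}_\sppi V_2 = 0$, so $V$ is itself a parallel transport along $\ppi$.

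Next I would apply Proposition \ref{prop:normpres} to $V$: the map $t \mapsto \int |V_t|^2\,\d\ppi \in \R$ is constant on $[0,1]$. By hypothesis $V_{1,t_0} = V_{2,t_0}$ in $\e_{t_0}^*L^2(T\X)$, hence $V_{t_0} = 0$ and therefore $\int |V_{t_0}|^2\,\d\ppi = 0$. The constancy of $t \mapsto \int |V_t|^2\,\d\ppi$ then forces $\int |V_t|^2\,\d\ppi = 0$ for every $t \in [0,1]$, which yields $V_t = 0$ in $\e_t^*L^2(T\X)$ for every $t \in [0,1]$.

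The one small subtlety to be careful about is that $V_t$ is well-defined pointwise (not just almost everywhere in $t$): this is precisely guaranteed by the continuous representative $\iota(V) \in \mathscr{C}(\ppi)$ provided by Theorem \ref{thm:H12(ppi)_in_C(ppi)}, which is the convention adopted after that theorem. Without this, the hypothesis $V_{1,t_0} = V_{2,t_0}$ would only make sense up to a null set in $t$, and the argument would be vacuous. There is no serious obstacle: once continuity is invoked, the conclusion $V_1 = V_2$ as elements of $\mathscr{H}^{1,2}(\ppi)$ is immediate.
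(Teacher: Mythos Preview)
Your proof is correct and follows essentially the same approach as the paper: form the difference $V=V_1-V_2$, observe it is again a parallel transport, and invoke norm preservation (Proposition \ref{prop:normpres}) to conclude $V_t=0$ for all $t$. The paper uses the slightly stronger form of Proposition \ref{prop:normpres} (constancy of $t\mapsto|V_t|^2$ as an $L^1(\ppi)$-valued map, not just its integral), but your version via $\int|V_t|^2\,\d\ppi$ is equally valid, and your remark on the continuous representative is a welcome clarification.
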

\begin{proof}
Since ${\rm D}_\sppi(V_1-V_2)={\rm D}_\sppi V_1-{\rm D}_\sppi V_2=0$, we have that $V_1-V_2$ is a parallel transport and by assumption we know that $|V_{1,t_0}-V_{2,t_0}|=0$ $\ppi$-a.e.. Thus Proposition \ref{prop:normpres} above grants that for every $t\in[0,1]$ it holds $|V_{1,t}-V_{2,t}|=0$ $\ppi$-a.e., i.e.\ that $V_{1,t}=V_{2,t}$.
\end{proof}
\begin{remark}{\rm
We emphasize that the norm preservation property is a consequence of the Leibniz formula in Proposition \ref{prop:Leibniz_in_H}. We don't know if such formula holds for $V,W\in\mathscr W^{1,2}(\ppi)$ and this is why we defined the parallel transport as an element of $\mathscr H^{1,2}(\ppi)$ with null convective derivative, as opposed to an element of $\mathscr W^{1,2}(\ppi)$ with the same property.
}\fr\end{remark}
\subsubsection{Some consequences of existence of parallel transport}
In this section we assume existence of parallel transport along some/all Lipschitz test plans and see what can be derived from such assumption.

It will be convenient to recall the concept of \emph{base} of a module, referring to \cite{Gigli14} for a more detailed discussion. Let $\mu$ be a Borel measure on a Polish space $\Y$, $\mathscr M$ a $L^2(\mu)$-normed module, $v_1,\ldots,v_n\in\mathscr M$ and $E\subset\Y$ a Borel  set. Then the $v_i$'s are said to be \emph{independent} on $E$ provided for any $f_i\in L^\infty(\mu)$ we have
\[
\sum_{i}f_iv_i=0\qquad\Rightarrow\qquad f_i=0\quad\mu-a.e.\ on \ E
\] 
and \emph{generators} of $\mathscr M$ on $E$ provided $L^\infty(\mu)$-linear combinations of the $v_i$'s are dense in $\{\nchi_Ev\,:\,v\in\mathscr M\}$. If  $v_1,\ldots,v_n$ are both independent and generators of $\mathscr M$ on $E$ we say that they are a base of $\mathscr M$ on $E$ and in this case we say that the dimension of $\mathscr M$ on $E$ is $n$.

Recall that there always exists a (unique up to $\mu$-negligible sets) Borel partition $(E_i)_{i\in\N\cup\{\infty\}}$ of $\Y$, called \emph{dimensional decomposition of $\mathscr M$},  such that the dimension of $\mathscr M$ on $E_i$ is $i$ for every $i\in\N$ and for no Borel subset $F$ of $E_\infty$ with positive measure the dimension of $\mathscr M$ on $F$ is finite.

For a separable Hilbert module $\mathscr H$ on a space with finite measure $\mu$ one can always find an \emph{orthonormal base} $(v_n)_{n\in\N}$ i.e.\ a sequence whose $L^\infty$-linear combinations are dense in $\mathscr H$ and such that for some Borel partition $(E_n)_{n\in\N\cup\{\infty\}}$ the following hold:
\[
\begin{array}{lrll}
\forall n\in\N\cup\{\infty\}\text{ we have }&\quad\la v_i,v_j\ra\!\!\!&=\delta_{ij}&\qquad\mu-a.e.\ on \ E_n\ \forall i,j\in\N,\ i,j< n\\
\forall n\in\N\text{ we have }&\quad |v_i|\!\!\!&=0&\qquad\mu-a.e.\ on \ E_n\ \forall i\in\N,\ i\geq n
\end{array}
\]
and it is easily verified that if these hold, then necessarily the $E_n$'s form the dimensional decomposition of $\mathscr H$  
(the role of the assumption about finiteness of $\mu$ is to ensure that the $v_i$'s are elements of $\mathscr H$: their pointwise norm is in $L^\infty(\mu)$ and thus in general it may be not in $L^2(\mu)$).

Given such a base and $v\in\mathscr H$ there are functions $a_n\in L^2(\mu)$ such that
\begin{equation}
\label{eq:base}
v=\sum_{n\in\N}a_nv_n
\end{equation}
meaning that the sequence converges absolutely in $\mathscr H$. A choice for the $a_n$ is $a_n:=\la v,v_n\ra$ and for any two sequences $(a_n),(\tilde a_n)$ for which \eqref{eq:base} holds we have
\[
a_n=\tilde a_n\qquad\mu-a.e.\ on\ E_m,\quad \forall m\in\N\cup\{\infty\},\ m>n.
\]
With this said, it is easy to see that parallel transport sends bases into bases:
\begin{proposition}\label{prop:traspbase}
Let  $\ppi$ be a  Lipschitz test plan such that for every $t\in[0,1]$ and   $\bar V_t\in \e_t^*L^2(T\X)$ there exists a (necessarily unique) parallel transport $V$  along $\ppi$ such that $V_t=\bar V_t$.

Also, let $(E_n)_{n\in\N\cup\{\infty\}}$ be the dimensional decomposition of $\e_0^*L^2(T\X)$ and $(\bar V_n)\subset \e_0^*L^2(T\X)$, $n\in\N$ an orthonormal base of $ \e_0^*L^2(T\X)$ and denote by $t\mapsto V_{n,t}$ the parallel transport of $\bar V_n$ along $\ppi$.

Then for every $t\in[0,1]$ the partition  $(E_n)_{n\in\N\cup\{\infty\}}$ is also the dimensional decomposition of $\e_t^*L^2(T\X)$ and the set $(V_{n,t})$ is  an orthonormal base of $ \e_t^*L^2(T\X)$.
\end{proposition}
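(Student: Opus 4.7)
The strategy is to package the parallel transport into a map $T_t:\e_0^*L^2(T\X)\to\e_t^*L^2(T\X)$ sending $\bar V$ to $V_t$, where $V$ is the unique parallel transport with $V_0=\bar V$ (existence from the hypothesis, uniqueness from Corollary~\ref{cor:uni}). Linearity of ${\rm D}_\sppi$ makes $T_t$ an $\R$-linear map. Applying Proposition~\ref{prop:normpres} to $V_1\pm V_2$ and polarizing yields the pointwise identity $\la T_t\bar V_1,T_t\bar V_2\ra=\la\bar V_1,\bar V_2\ra$ $\ppi$-a.e., so that $T_t$ preserves the pointwise inner product. Specializing to the given base, the two orthonormality identities (with the partition $(E_n)$) transfer verbatim from time $0$ to time $t$, yielding the orthogonality half of the claim.

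It then remains to show that the $L^\infty(\ppi)$-linear combinations of $(V_{n,t})$ are dense in $\e_t^*L^2(T\X)$. Given any $\bar W\in\e_t^*L^2(T\X)$, the hypothesis furnishes a parallel transport $W$ with $W_t=\bar W$. Since $(\bar V_n)$ is an orthonormal base, I can approximate $W_0\in\e_0^*L^2(T\X)$ by finite $L^\infty$-combinations $\bar W^k=\sum_{n=0}^{N_k}a^k_n\bar V_n$ with $a^k_n\in L^\infty(\ppi)$. The crucial lemma is that for $a\in L^\infty(\ppi)$ regarded as constant in time, the map $V\mapsto aV$ sends parallel transports to parallel transports. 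For a transport $V$ of some $\bar V$ with $|\bar V|\in L^\infty(\ppi)$, pointwise norm preservation gives $|V|=|\bar V|\in L^\infty(\ppi\times\mathcal L_1)$, and trivially ${\rm D}_\sppi V=0\in L^\infty$; the function $a_t\equiv a$ belongs to $W^{1,2}([0,1],L^2(\ppi))$ with vanishing derivative, so Proposition~\ref{prop:exh}(ii) gives $aV\in\mathscr H^{1,2}(\ppi)$ with ${\rm D}_\sppi(aV)=a'V+a\,{\rm D}_\sppi V=0$.

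Since the base vectors satisfy $|\bar V_n|\in\{0,1\}$ and hence lie in $L^\infty(\ppi)$, this lemma combined with linearity shows that $W^k:=\sum_{n=0}^{N_k}a^k_n V_n$ is the parallel transport of $\bar W^k$. The isometry property established in the first paragraph then gives
\[
\|W^k_t-W_t\|_{\e_t^*L^2(T\X)}=\|\bar W^k-W_0\|_{\e_0^*L^2(T\X)}\xrightarrow{k\to\infty}0,
\]
so that $\sum_{n=0}^{N_k}a^k_n V_{n,t}\to\bar W$ in $\e_t^*L^2(T\X)$, which is the desired density. Together with the orthonormality identities this proves $(V_{n,t})$ is an orthonormal base of $\e_t^*L^2(T\X)$, and the uniqueness (up to $\ppi$-null sets) of the partition determined by any orthonormal base forces $(E_n)$ to be the dimensional decomposition of $\e_t^*L^2(T\X)$ as well. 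The delicate point in the whole argument is the middle one: verifying that time-independent $L^\infty$ multiplication preserves the class of parallel transports, which is exactly the mechanism that translates coherent approximations at time $0$ into approximations at any time $t$.
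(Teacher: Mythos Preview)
Your argument is correct and shares the paper's core idea: the assignment $\bar V\mapsto V_t$ is linear and, by Proposition~\ref{prop:normpres} plus polarization, preserves the pointwise inner product. The paper's proof is considerably terser---it simply observes that this map (and its counterpart from time $t$ to time $s$) is therefore a module isomorphism between $\e_0^*L^2(T\X)$ and $\e_t^*L^2(T\X)$, and that isomorphisms send orthonormal bases to orthonormal bases and preserve the dimensional decomposition.

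Where you differ is in the density step: rather than invoking surjectivity directly (the inverse of $T_t$ is the parallel transport map based at time $t$, which exists by hypothesis), you transport approximations from time $0$ to time $t$ via Proposition~\ref{prop:exh}(ii), showing that multiplication by a time-constant $a\in L^\infty(\ppi)$ preserves the class of parallel transports. This is a legitimate route and has the merit of making the $L^\infty(\ppi)$-compatibility explicit. The paper's shortcut is to note that a pointwise-norm-preserving $\R$-linear map between Hilbert modules is automatically $L^\infty$-linear (check it first on characteristic functions via polarization), so once bijectivity is in hand the module isomorphism follows without appealing to Proposition~\ref{prop:exh}. Either way the two-sided existence hypothesis is what drives the argument.
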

\begin{proof}
For every $t,s\in[0,1]$ consider the map sending $\bar V\in\e_t^*L^2(T\X)$ to $V_s\in\e_s^*L^2(T\X)$ where $V\in\mathscr H^{1,2}(\ppi)$ is the parallel transport such that $V_t=\bar V$. Proposition \ref{prop:normpres} ensures that this map preserves the pointwise norm. Since it is clearly linear, it is easily verified that it must be an isomorphism of $\e_t^*L^2(T\X)$ and $\e_s^*L^2(T\X)$.

The conclusions follow.
\end{proof}
We shall apply this result to show that, under the same assumptions, we have $\mathscr W^{1,2}(\ppi)=\mathscr H^{1,2}(\ppi)$:
\begin{proposition}[$\mathscr H=\mathscr W$]
Let  $\ppi$ be a  Lipschitz test plan such that for every $t\in[0,1]$ and   $\bar V_t\in \e_t^*L^2(T\X)$ there exists a (necessarily unique) parallel transport $V$  along $\ppi$ such that $V_t=\bar V_t$.

Then $\mathscr H^{1,2}(\ppi)=\mathscr W^{1,2}(\ppi)$.
\end{proposition}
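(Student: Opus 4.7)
The inclusion $\mathscr H^{1,2}(\ppi)\subseteq\mathscr W^{1,2}(\ppi)$ is immediate from Definition \ref{def:h12}, so the real task is the reverse inclusion. My plan is to use parallel transport to produce a distinguished orthonormal base of each $\e_t^*L^2(T\X)$, expand an arbitrary $V\in\mathscr W^{1,2}(\ppi)$ in this base, and check that the resulting partial sums already lie in $\mathscr H^{1,2}(\ppi)$ and approximate $V$ in $\mathscr W^{1,2}(\ppi)$.

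First, I would invoke Proposition \ref{prop:traspbase} applied to an orthonormal base $(\bar V_n)_{n\in\N}$ of $\e_0^*L^2(T\X)$ to obtain parallel transports $V_n\in\mathscr H^{1,2}(\ppi)$ with ${\rm D}_\sppi V_n=0$ such that $(V_{n,t})_{n\in\N}$ is an orthonormal base of $\e_t^*L^2(T\X)$ for every $t\in[0,1]$. Given $V\in\mathscr W^{1,2}(\ppi)$, I set $a_n(t):=\la V_t,V_{n,t}\ra$. Applying the Leibniz formula in Proposition \ref{prop:Leibniz_in_H} (valid because $V\in\mathscr W^{1,2}(\ppi)$ and $V_n\in\mathscr H^{1,2}(\ppi)$) and using ${\rm D}_\sppi V_n=0$, I get that $a_n\in W^{1,1}([0,1],L^1(\ppi))$ with $a_n'(t)=\la{\rm D}_\sppi V_t,V_{n,t}\ra$ for a.e.\ $t$.

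Next I upgrade the integrability of $a_n$. Since $|V_{n,t}|\leq 1$, Cauchy--Schwarz gives $\|a_n(t)\|_{L^2(\sppi)}\leq[\![V]\!]_t$ and $\|a_n'(t)\|_{L^2(\sppi)}\leq[\![{\rm D}_\sppi V]\!]_t$, both belonging to $L^2([0,1])$. Combining this with the distributional identity that we already have in $L^1$ and using Proposition \ref{prop:carw1p}, I conclude $a_n\in W^{1,2}([0,1],L^2(\ppi))$. Since moreover $|V_n|\leq 1$ and $|{\rm D}_\sppi V_n|=0$ are in $L^\infty(\ppi\times\mathcal L_1)$, Proposition \ref{prop:exh}(ii) yields $a_nV_n\in\mathscr H^{1,2}(\ppi)$ with ${\rm D}_\sppi(a_nV_n)=a_n'V_n$.

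Finally, I define $S_N:=\sum_{n=1}^N a_nV_n\in\mathscr H^{1,2}(\ppi)$ and show $S_N\to V$ in $\mathscr W^{1,2}(\ppi)$. Since $(V_{n,t})$ is an orthonormal base of the Hilbert module $\e_t^*L^2(T\X)$, Parseval's identity gives, $\ppi$-a.e.\ and a.e.\ in $t$, the pointwise expansions
\[
|V_t|^2=\sum_{n\in\N}|a_n(t)|^2, \qquad |{\rm D}_\sppi V_t|^2=\sum_{n\in\N}|a_n'(t)|^2,
\]
so that $|V_t-S_{N,t}|^2$ and $|{\rm D}_\sppi V_t-{\rm D}_\sppi S_{N,t}|^2$ both converge to $0$ $\ppi$-a.e.\ as $N\to\infty$, while being dominated by $|V_t|^2$ and $|{\rm D}_\sppi V_t|^2$ respectively. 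Dominated convergence then gives $S_N\to V$ in $\mathscr W^{1,2}(\ppi)$, and since $\mathscr H^{1,2}(\ppi)$ is closed in $\mathscr W^{1,2}(\ppi)$ by definition, we conclude $V\in\mathscr H^{1,2}(\ppi)$. The step I expect to be most delicate is the upgrade from $W^{1,1}([0,1],L^1(\ppi))$ to $W^{1,2}([0,1],L^2(\ppi))$ for the coefficients $a_n$, since Proposition \ref{prop:Leibniz_in_H} only provides the weaker integrability directly; everything else is a fairly clean base-expansion argument powered by norm preservation along parallel transport.
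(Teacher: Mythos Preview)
Your proof is correct and follows essentially the same approach as the paper: expand $V$ in the parallel-transported orthonormal base, upgrade the coefficients $a_n$ from $W^{1,1}([0,1],L^1(\ppi))$ to $W^{1,2}([0,1],L^2(\ppi))$ via Proposition \ref{prop:carw1p}, and apply Proposition \ref{prop:exh}(ii) to get each $a_nV_n\in\mathscr H^{1,2}(\ppi)$. The only cosmetic difference is that the paper verifies convergence by showing the partial sums are $\mathscr W^{1,2}(\ppi)$-Cauchy using the identity $\sum_n\iint(|a_{n,t}|^2+|a_{n,t}'|^2)\,\d t\,\d\ppi=\|V\|_{\mathscr L^2(\sppi)}^2+\|{\rm D}_\sppi V\|_{\mathscr L^2(\sppi)}^2$, whereas you phrase the same computation as dominated convergence via Parseval.
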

\begin{proof}
Let $V\in \mathscr W^{1,2}(\ppi)$,  $(\bar V_i)\subset \e_0^*L^2(T\X)$, $i\in\N$,  an orthonormal base of $ \e_0^*L^2(T\X)$ and $t\mapsto V_{i,t}$ the parallel transport of $\bar V_i$ along $\ppi$. Then by Proposition \ref{prop:traspbase} above we see that
\begin{equation}
\label{eq:a}
V_t=\sum_{i\in\N}a_{i,t} V_{i,t}\qquad\text{where}\quad a_{i,t}:=\la V_t, V_{i,t}\ra\qquad a.e.\ t,
\end{equation}
being intended that the series converges absolutely in $\e_t^*L^2(T\X)$ for a.e.\ $t$. By Proposition \ref{prop:Leibniz_in_H} we see that $t\mapsto a_{i,t}$ is in $W^{1,1}([0,1],L^1(\ppi))$ with derivative given by
\begin{equation}
\label{eq:dera}
a_{i,t}'=\la {\rm D}_\sppi V_t,V_{i,t}\ra.
\end{equation} 
In particular, since $|V_{i,t}|\leq 1$ we see that  $a_{i,t},a'_{i,t}\in L^2([0,1],L^2(\ppi))$ and in turn this implies - by Proposition \ref{prop:carw1p} - that $t\mapsto a_{i,t}$ is in $W^{1,2}([0,1],L^2(\ppi))$.  This fact and point $(ii)$ in Proposition \ref{prop:exh} give that $(t\mapsto a_{i,t}V_{i,t})\in\mathscr H^{1,2}(\ppi)$ for every $i\in\N$ and therefore $(t\mapsto \sum_{i=0}^na_{i,t}V_{i,t})\in\mathscr H^{1,2}(\ppi)$ for every $n\in\N$. 

Hence to conclude it is sufficient to show that these partial sums are a $\mathscr W^{1,2}(\ppi)$-Cauchy sequence, as then it is clear from \eqref{eq:a} that the limit coincides with $V$. From \eqref{eq:a} and \eqref{eq:dera} we have that
\[
\sum_{i\in\N}\iint_0^1 |a_{i,t}|^2+ |a'_{i,t}|^2\,\d t\,\d\ppi=\|V\|_{\mathscr L^2(\sppi)}^2+\|{\rm D}_\sppi V\|_{\mathscr L^2(\sppi)}^2<\infty,
\]
hence the conclusion follows from the identity
\[
\Big\|\sum_{i=n}^ma_iV_i\Big\|_{\mathscr W^{1,2}(\sppi)}^2=\Big\|\sum_{i=n}^ma_iV_i\Big\|_{\mathscr L^{2}(\sppi)}^2+\Big\|\sum_{i=n}^ma_i'V_i\Big\|_{\mathscr L^{2}(\sppi)}^2=\sum_{i=n}^m\iint_0^1|a_{i,t}|^2+|a_{i,t}'|^2\,\d t\,\d\ppi.
\]
\end{proof}
We shall now prove that if the parallel transport exists along all Lipschitz test plans, then the dimension of $\X$, intended here as the dimension of the tangent module, must be constant. We shall use the following simple lemma (for simplicity we state it for Hilbert modules, but in fact the same holds for general ones):
\begin{lemma}\label{le:dimpb} Let $(\X,\sfd_{\X} ,\mm_\X)$ and $(\Y,\sfd_{\Y} ,\mm_\Y)$ be two metric measure spaces, $\varphi:\Y\to\X$ of bounded compression and $\mathscr H$ an Hilbert module on $\X$. Then for every $E\subset\X$ Borel we have that the dimension of $\mathscr H$ on $E$ is $n$ if and only if the dimension of $\varphi^*\mathscr H$ on $\varphi^{-1}(E)$ is $n$.
\end{lemma}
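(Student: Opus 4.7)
The plan is to transfer a local base through the pullback operation. Using the polarization of property (i) in Theorem \ref{thm:pullback_L0}, for Hilbert modules we have the pointwise identity
\[
\la \varphi^* v, \varphi^* w\ra = \la v, w\ra \circ \varphi \qquad \mm_\Y\text{-a.e.}
\]
for every $v,w\in\mathscr H$. This identity, together with the density property (ii) of the pullback construction, will be the engine driving the argument.

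First I would handle the direction ``dim $n$ on $E$ $\Rightarrow$ dim $n$ on $\varphi^{-1}(E)$''. Pick a base $v_1,\dots,v_n\in\mathscr H$ of $\mathscr H$ on $E$ and claim that $\varphi^*v_1,\dots,\varphi^*v_n$ form a base of $\varphi^*\mathscr H$ on $\varphi^{-1}(E)$. For generation: any element of $\varphi^*\mathscr H$ can be $L^0$-approximated, by property (ii) of Theorem \ref{thm:pullback_L0}, by finite sums $\sum_i\nchi_{A_i}\varphi^*w_i$ with $w_i\in\mathscr H$; restricting to $\varphi^{-1}(E)$ and approximating each $w_i|_E$ by $L^\infty$-combinations $\sum_j g_{ij}v_j$ we obtain, after applying $\varphi^*$ and using linearity, approximations by $L^\infty(\mm_\Y)$-combinations of the $\varphi^*v_j$'s. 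For independence: suppose $\sum_i f_i\,\varphi^*v_i=0$ on $\varphi^{-1}(E)$ with $f_i\in L^\infty(\mm_\Y)$; pairing with $\varphi^*v_j$ and using the polarization identity gives
\[
\sum_i f_i\,(\la v_i,v_j\ra\circ\varphi)=0 \qquad \mm_\Y\text{-a.e.\ on }\varphi^{-1}(E),\ j=1,\dots,n.
\]
Now the standard fact for Hilbert modules—that $v_1,\dots,v_n$ being independent on $E$ is equivalent to the Gram matrix $G_{ij}:=\la v_i,v_j\ra$ being pointwise non-singular $\mm_\X$-a.e.\ on $E$—combined with the bounded compression assumption $\varphi_*\mm_\Y\leq C\mm_\X$ ensures that $G\circ\varphi$ is pointwise non-singular $\mm_\Y$-a.e.\ on $\varphi^{-1}(E)$, forcing $f_i=0$ there.

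For the reverse implication I would invoke the dimensional decomposition. Decompose $E=\bigsqcup_{m\in\N\cup\{\infty\}}E_m$ where $\mathscr H$ has constant dimension $m$ on $E_m$. The forward direction just proved yields that $\varphi^*\mathscr H$ has dimension $m$ on $\varphi^{-1}(E_m)$, so $(\varphi^{-1}(E_m))$ realizes the dimensional decomposition of $\varphi^*\mathscr H$ on $\varphi^{-1}(E)$ modulo $\mm_\Y$-null sets. Assuming $\varphi^*\mathscr H$ has dimension $n$ on $\varphi^{-1}(E)$ then forces $\varphi^{-1}(E_m)$ to be $\mm_\Y$-null for $m\neq n$; pulling this conclusion back to $\X$ via the bounded compression hypothesis (and interpreting ``dimension on $E$'' in the only possible $\mm_\X$-meaningful way) gives dim $n$ on $E$.

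The mildly subtle point is the measure-theoretic bookkeeping in the converse: bounded compression only controls $\mm_\X$-null sets from above, so one must interpret the dimension on $E$ modulo the natural a.e.\ equivalence tied to the support of $\varphi_*\mm_\Y$. Once this is acknowledged, the forward step (which is the crux) reduces cleanly to the Gram-matrix computation, and the converse follows by a uniqueness-of-dimensional-decomposition argument. The main structural obstacle is really just establishing the polarization identity $\la\varphi^*v,\varphi^*w\ra=\la v,w\ra\circ\varphi$ from property (i), but this is immediate by expanding $|\varphi^*v+\varphi^*w|^2$.
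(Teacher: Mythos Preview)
Your argument is correct and follows the same route as the paper: pull back a base through $\varphi^*$, use the polarization identity $\la\varphi^*v,\varphi^*w\ra=\la v,w\ra\circ\varphi$ for independence, and recover the converse from the forward step via the dimensional decomposition. The only notable difference is that the paper chooses an \emph{orthonormal} base on $E$, so the Gram matrix is the identity and independence collapses to the one-line computation $0=\big|\sum_i f_i\,\varphi^*v_i\big|^2=\sum_i f_i^2$ $\mm_\Y$-a.e.\ on $\varphi^{-1}(E)$, whereas you work with a general base and invoke pointwise non-singularity of the Gram matrix; the paper is also terser on the converse, simply citing well-posedness of dimension where you spell out the decomposition argument (and rightly flag the one-sided control that bounded compression gives over null sets).
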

\begin{proof} From the well-posedness of the definition of dimension we see that it is sufficient to prove the \emph{only if}. Thus let $v_0,\ldots,v_{n-1}$ be an orthonormal
base of $\mathscr H$ on $E$, so in particular $\la v_i,v_j\ra=\delta_{ij}$ $\mm_\X$-a.e.\ on $E$.

From the fact that  $v_0,\ldots,v_{n-1}$ generate $\mathscr H$ on $E$ and the very definition of pullback we deduce that  $\varphi^*v_0,\ldots,\varphi^*v_{n-1}$ generate $\varphi^*\mathscr H$ on $\varphi^{-1}(E)$.

To see that they are independent, let $f_0,\ldots,f_{n-1}\in L^\infty(\Y)$ be such that $\sum_if_i\varphi^*v_i=0$ on $\varphi^{-1}(E)$. Then it holds
\[
0=\Big|\sum_if_i\varphi^*v_i\Big|^2=\sum_{i,j}f_if_j\la \varphi^*v_i,\varphi^*v_j\ra=\sum_{i,j}f_if_j\la v_i,v_j\ra\circ\varphi=\sum_if_i^2\qquad\mm_\Y-a.e.\ on\ \varphi^{-1}(E),
\]
and therefore $f_i=0$ $\mm_\Y$-a.e.\ on $\varphi^{-1}(E)$ for every $i=0,\ldots,n-1$. 
\end{proof}
We now prove that the dimension is constant:
\begin{theorem}[From parallel transport to constant dimension]\label{thm:constdim}
Let $(\X,\sfd,\mm)$ be a $\RCD(K,\infty)$ space such that for any Lipschitz test plan $\ppi$, any $t\in[0,1]$ and any $\bar V\in \e_t^*L^2(T\X)$ there exists the parallel transport $V$  along $\ppi$ such that  $V_t=\bar V$.

Then the tangent module has constant dimension, i.e.\ in its dimensional decomposition $(E_i)_{i\in\N\cup\{\infty\}}$ one of the $E_i$'s has full measure and the others are negligible. 
\end{theorem}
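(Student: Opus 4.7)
The plan is to argue by contradiction: suppose the dimensional decomposition $(F_i)_{i\in\N\cup\{\infty\}}$ of the tangent module $L^2(T\X)$ is not concentrated on a single index, so that there exist distinct $n,m\in\N\cup\{\infty\}$ with $\mm(F_n),\mm(F_m)>0$. Then I would build a single Lipschitz test plan $\ppi$ whose marginal at $t=0$ lives in $F_n$ and whose marginal at $t=1$ lives in $F_m$, and show that along such a $\ppi$ the conclusion of Proposition \ref{prop:traspbase}, combined with Lemma \ref{le:dimpb}, is incompatible with this splitting.

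For the construction, I would pick bounded Borel sets $A\subseteq F_n$ and $B\subseteq F_m$ with $0<\mm(A),\mm(B)<\infty$, and set $\mu_0:=\mm(A)^{-1}\nchi_A\,\mm$ and $\mu_1:=\mm(B)^{-1}\nchi_B\,\mm$; these are probability measures, absolutely continuous with bounded density and bounded support. Invoking the optimal transport theory available on $\RCD$ spaces -- specifically Rajala's preservation of essentially bounded densities along $W_2$-geodesics together with Lisini's lifting theorem -- I would obtain a Borel probability measure $\ppi$ on $\Gamma(\X)$, concentrated on constant-speed geodesics between points of $A\cup B$, such that $(\e_t)_*\ppi$ is the $W_2$-geodesic from $\mu_0$ to $\mu_1$ and $(\e_t)_*\ppi\leq C\mm$ holds uniformly in $t\in[0,1]$. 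The boundedness of $A\cup B$ provides a uniform Lipschitz bound on the curves in the support of $\ppi$, whence $\ppi$ is a Lipschitz test plan in the sense of Definition \ref{def:lip_test_plan}.

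By the standing hypothesis, for every $\bar V\in\e_0^*L^2(T\X)$ the parallel transport along $\ppi$ with initial datum $\bar V$ exists; hence Proposition \ref{prop:traspbase} furnishes a Borel partition $(E_k)_{k\in\N\cup\{\infty\}}$ of $\Gamma(\X)$ which coincides, for every $t\in[0,1]$, with the dimensional decomposition of $\e_t^*L^2(T\X)$. On the other hand, Lemma \ref{le:dimpb} applied to $\varphi=\e_t$ identifies this dimensional decomposition with the pullback partition $(\e_t^{-1}(F_k))_{k\in\N\cup\{\infty\}}$, modulo $\ppi$-negligible sets. Comparing these two descriptions at $t=0$ and $t=1$ forces $\e_0^{-1}(F_n)=\e_1^{-1}(F_n)$ up to $\ppi$-null sets, and hence
\[
1=\mu_0(F_n)=\ppi\bigl(\e_0^{-1}(F_n)\bigr)=\ppi\bigl(\e_1^{-1}(F_n)\bigr)=\mu_1(F_n)\leq\mu_1(\X\setminus F_m)=0,
\]
a contradiction.

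The delicate point in this plan is the construction of the test plan in the first step; everything else is an essentially mechanical combination of Proposition \ref{prop:traspbase} and Lemma \ref{le:dimpb}. The $W_2$-geodesic route is the natural candidate and gives a clean picture, but extracting the uniform density bound $(\e_t)_*\ppi\leq C\mm$ in the pure $\RCD(K,\infty)$ setting is the subtle step; should it require extra care, an alternative via heat-kernel regularization of $\mu_0$ and $\mu_1$ before transporting should also deliver a Lipschitz test plan connecting $F_n$ and $F_m$, with the same conclusion.
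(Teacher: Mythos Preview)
Your proposal is correct and follows essentially the same route as the paper: argue by contradiction, connect two strata of different dimension by the optimal geodesic plan between normalized restrictions of $\mm$, observe this plan is a Lipschitz test plan, and then combine Proposition~\ref{prop:traspbase} with Lemma~\ref{le:dimpb} to reach a contradiction. The paper cites \cite{GigliRajalaSturm13} for the fact that the optimal geodesic plan is a test plan, while you invoke Rajala's density bounds plus Lisini's lifting; your caution about the $\RCD(K,\infty)$ case is well placed, but the needed bound is indeed available.
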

\begin{proof}
We argue by contradiction, thus we shall assume that for some $i, j\in\N\cup\{\infty\}$, $i\neq j$, we have $\mm(E_i),\mm(E_j)>0$. Let $F_0\subset E_i$ and $F_1\subset E_j$ be bounded, with positive and finite measure, consider
\[
\mu_0:=\mm(F_0)^{-1}\mm\restr{F_0}\qquad\qquad\mu_1:=\mm(F_1)^{-1}\mm\restr{F_1},
\]
let $\ppi$ be the unique optimal geodesic plan connecting them and recall that it is a test plan (see \cite{GigliRajalaSturm13}). Since $\ppi(\e_0^{-1}(F_0))=\mu_0(F_0)=1$ and the dimension of $L^2(T\X)$ on $F_0$ is $i$, by Lemma \ref{le:dimpb} above we see that for the dimensional decomposition $(\tilde E^0_n)_{n\in\N\cup\{\infty\}}$  of $\e_0^*L^2(T\X)$ we have $\ppi(\tilde E^0_i)=1$ and $\ppi(\tilde E^0_k)=0$ for every $k\neq i$. Similarly, for the 
 the dimensional decomposition $(\tilde E^1_n)_{n\in\N\cup\{\infty\}}$  of $\e_1^*L^2(T\X)$ we have $\ppi(\tilde E^1_j)=1$ and $\ppi(\tilde E^1_k)=0$ for every $k\neq j$. In particular we have
 \begin{equation}
\label{eq:dimdiv}
\ppi(\tilde E^0_i\Delta \tilde E^1_i)=\ppi(\tilde E^0_i)=1>0.
\end{equation}
Now notice that from basic considerations about optimal transport we have that $\ppi$ is concentrated on geodesics starting from $F_0$ and ending in $F_1$. The constant speed of any such geodesic is bounded from above by $\sup_{x\in F_0,y\in F_1}\sfd(x,y)<\infty$ so that $\ppi$ is a Lipschitz test plan. Therefore from Proposition \ref{prop:traspbase} we know that the dimensional decomposition of $\e_t^*L^2(T\X)$ does not depend on $t$. This however contradicts \eqref{eq:dimdiv}, hence the proof is completed.
\end{proof}

\subsection{Existence of the parallel transport in a special case}\label{se:ex}
It is unclear to us whether on general $\RCD$ spaces the parallel transport exists or not. Aim of this section it to show at least that the theory we propose is not empty, i.e.\ that under suitable assumptions on the space, the parallel transport exists. We won't insist in trying to make such assumptions as general as possible (for instance, the `good base' defined below could consist in different vector fields on different open sets covering our space) as our main concern is only to show that in some circumstances our notion of parallel transport can be shown to exist.

We shall work with spaces admitting the following sort of base for the tangent module: 
\begin{definition}[Good base]\label{def:good_basis}
Let $({\rm X},\sfd,\mm)$ be a given $\mathsf{RCD}(K,N)$ space, for some $K\in\R$ and $N\in(1,\infty)$.
Let us denote by ${(A_k)}_{k=1}^n$ the dimensional decomposition of $\rm X$.
Then a family $\mathcal{W}=\{w_1,\ldots,w_n\}\subseteq H^{1,2}_C(T{\rm X})$ of Sobolev vector fields 
on $\rm X$ is said to be a \emph{good basis} for $L^2(T{\rm X})$ provided there exists
$M>0$ such that the following properties are satisfied:
\begin{itemize}
\item[(i)] For any $k=1,\ldots,n$, we have that $w_1,\ldots,w_k$ 
constitute a basis for $L^2(T{\rm X})$ on $A_k$ and
\begin{equation}\label{eq:quasi_orth_Sobolev_basis}
\left\{\begin{array}{ll}
|w_i|\in( M^{-1},M),\\
\big|\langle w_i, w_j\rangle\big|<\frac{1}{M^2k}
\end{array}\right.
\mm\mbox{-a.e.\ in }A_k,\quad\mbox{ for every }i,j=1,\ldots,k\mbox{ with }i\neq j,
\end{equation}
\item[(ii)] We have
\begin{equation}\label{eq:bdd_basis}
{|\nabla w_i|}_{\mathsf{HS}}\leq M\quad
\mm\text{-a.e.\ in }{\rm X},\qquad\forall i=1,\ldots,n.
\end{equation}
\end{itemize}
\end{definition}
Let us notice that the `hard' assumption here is given by point $(ii)$ (perhaps coupled with the lower bound in $(i)$), which imposes an $L^\infty$ bound on covariant derivative, when in our setting $L^2$-ones are more natural (compare with Theorem \ref{thm:sobbase}). Let us mention in particular that for spaces admitting a good base it is not hard to prove, regardless of parallel transport, that the dimension is constant (see Proposition \ref{prop:constdim} below).

Let us start the technical work with the following simple lemma:
\begin{lemma}\label{lem:coordinates_Linfty}
Let $\mathscr H$ be a Hilbert module on $\X$, $A\subset\X$ be a Borel set and $M>1$. Also, for $k\in\N$ let $w_1,\ldots,w_k\in \mathscr H$ be such that
\[
\left\{\begin{array}{ll}
|w_i|\in(M^{-1},M)\\
\big|\la w_i,w_j\ra\big|\leq\frac{1}{M^2k}
\end{array}\right.\mbox{ hold }\mm\mbox{-a.e.\ in }A,
\quad\mbox{ for every }i,j=1,\ldots,k\mbox{ with }i\neq j.
\]
For   $h_1,\ldots,h_k\in L^0(\mm)\restr{A}$ put $w:=\sum_{i=1}^k h_i\,w_i\in\mathscr H^0$ (the $L^0$-completion of $\mathscr H$).

Then it holds
\begin{equation}
\label{eq:almort}
\frac{1}{M^2k}\sum_{i=1}^k|h_i|^2\leq |w|^2\leq M^2 k\sum_{i=1}^k|h_i|^2\qquad \mm-a.e.\ on\ A 
\end{equation}
and in particular $w\in\mathscr H\restr A$ if and only if $h_i\in L^2(\mm)\restr A$ for every $i=1,\ldots,k$.
\end{lemma}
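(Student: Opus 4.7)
The plan is to expand $|w|^2$ as a bilinear sum and to control the diagonal and off-diagonal parts separately; the diagonal is handled by the two-sided bound on $|w_i|$, while the off-diagonal is handled by the smallness of $\langle w_i,w_j\rangle$ together with the elementary AM-GM inequality $|h_ih_j|\le\tfrac12(h_i^2+h_j^2)$. Writing
\[
|w|^2=\sum_{i=1}^k h_i^2\,|w_i|^2+\sum_{\substack{i,j=1\\ i\neq j}}^k h_ih_j\,\langle w_i,w_j\rangle,
\]
the assumption $|w_i|^2\in(M^{-2},M^2)$ gives
\[
\frac{1}{M^2}\sum_{i=1}^k h_i^2\;\le\;\sum_{i=1}^k h_i^2\,|w_i|^2\;\le\;M^2\sum_{i=1}^k h_i^2\qquad\mm\text{-a.e.\ on }A.
\]

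For the cross terms I would first use $|\langle w_i,w_j\rangle|\le\tfrac{1}{M^2k}$ and then AM-GM to obtain
\[
\Big|\sum_{i\neq j}h_ih_j\,\langle w_i,w_j\rangle\Big|\;\le\;\frac{1}{M^2k}\sum_{i\neq j}|h_i||h_j|\;\le\;\frac{1}{2M^2k}\sum_{i\neq j}(h_i^2+h_j^2)\;=\;\frac{k-1}{M^2k}\sum_{i=1}^k h_i^2,
\]
which is in particular bounded above by $\tfrac{1}{M^2}\sum_i h_i^2$. Plugging these two estimates into the expansion and using that $M>1$ (so $M^2+1\le 2M^2\le M^2k$ whenever $k\ge 2$, while the off-diagonal sum is empty when $k=1$) gives the upper bound
\[
|w|^2\;\le\;M^2\sum_{i=1}^k h_i^2+\frac{1}{M^2}\sum_{i=1}^k h_i^2\;\le\;M^2k\sum_{i=1}^k h_i^2,
\]
and subtracting the off-diagonal estimate from the diagonal lower bound gives
\[
|w|^2\;\ge\;\frac{1}{M^2}\sum_{i=1}^k h_i^2-\frac{k-1}{M^2k}\sum_{i=1}^k h_i^2\;=\;\frac{1}{M^2k}\sum_{i=1}^k h_i^2,
\]
which is exactly \eqref{eq:almort}.

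The final assertion then follows at once: by definition $w$ belongs to $\mathscr H\restr A$ iff $|w|\in L^2(\mm)\restr A$, and \eqref{eq:almort} shows that $|w|^2\in L^1(\mm)\restr A$ if and only if $\sum_i |h_i|^2\in L^1(\mm)\restr A$, i.e.\ iff each $h_i\in L^2(\mm)\restr A$. I do not expect any real obstacle here; the only point requiring a little care is to make sure the pointwise computation in $\mathscr H^0$ is justified (it is, since the pointwise norm, inner product, and multiplication by $L^0$-functions all make sense in the $L^0$-completion of a Hilbert module) and to notice that the constant $M^2k$ absorbs the additive term $M^2+1$ uniformly in $k\ge 1$.
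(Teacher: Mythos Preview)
Your proof is correct and follows essentially the same approach as the paper's: expand $|w|^2$ bilinearly, control the diagonal via the two-sided bound on $|w_i|$, and control the off-diagonal via $|\langle w_i,w_j\rangle|\le\frac{1}{M^2k}$ together with $|h_ih_j|\le\tfrac12(h_i^2+h_j^2)$. The only cosmetic difference is in the upper bound, where the paper uses the cruder estimate $\langle w_i,w_j\rangle\le M^2$ for \emph{all} $i,j$ (including $i=j$) to get $M^2k\sum_i|h_i|^2$ in one stroke, whereas you keep the diagonal and off-diagonal separate and then absorb $M^2+\tfrac{1}{M^2}$ into $M^2k$; both are equally valid.
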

\begin{proof} For the second  in \eqref{eq:almort} we notice that $\la w_i,w_j\ra\leq M^2$ $\mm$-a.e.\ on $A$ for every $i,j$, thus
\[
|w|^2=\big|\sum_{i=1}^kh_i\,w_i\big|^2=\sum_{i,j=1}^kh_ih_j\la w_i,w_j\ra\leq M^2 \sum_{i,j=1}^k\frac12|h_i|^2+\frac12|h_j|^2= M^2k\sum_{i=1}^k|h_i|^2.
\]
For the first inequality we recall that  $|w_i|>M^{-1}$  and $\la w_i,w_j\ra\geq -\frac{1}{M^2k}$ for $i\neq j $ $\mm$-a.e.\ on $A$ to deduce 
\[
\begin{split}
|w|^2&=\big|\sum_{i=1}^kh_i\,w_i\big|^2=\sum_{i=1}^k|h_i|^2|w_i|^2+\sum_{i\neq j}h_ih_j\la w_i,w_j\ra\geq\frac1{M^2}\sum_{i=1}^k|h_i|^2-\frac{1}{M^2k}\sum_{i\neq j}|h_ih_j|\\
&\geq\frac1{M^2}\sum_{i=1}^k|h_i|^2-\frac{1}{M^2k}\sum_{i\neq j}\frac12|h_i|^2+\frac12|h_j|^2=\frac1{M^2k}\sum_{i=1}^k|h_i|^2.
\end{split}
\]
\end{proof}
The constant dimension now easily follows from the lemma and the fact that if a good base exists, then there is another one for which the functions $\la w_i,w_j\ra$ are Lipschitz, as shown in the proof of the following proposition.
\begin{proposition}\label{prop:constdim}
Let $(\X,\sfd,\mm)$ be a ${\RCD}(K,N)$ space admitting a good base for $L^2(T\X)$. Then the tangent module has constant dimension, i.e.\ in its dimensional decomposition $(E_k)_{k=1}^n$ one of the $E_k$'s has full measure and the others are negligible. 
\end{proposition}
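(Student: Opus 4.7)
The plan is to argue by contradiction: assume that $\mm(A_{k^*})>0$ and $\mm(A_j)>0$ for some $j<k^*$, where $k^*$ is the largest index with $\mm(A_{k^*})>0$. The central object will be the scalar function
\[
\phi:=\det\bigl((\la w_i,w_l\ra)_{i,l=1}^{k^*}\bigr),
\]
the Gram determinant of the first $k^*$ vectors of the good base. The strategy is to show that $\phi$ is continuous, bounded below by a positive constant on $A_{k^*}$, identically zero on $A_j$, and then to derive a contradiction by combining this with the connectedness of $\X$.

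First I would upgrade the regularity of the scalar quantities involved. Since $|\nabla w_i|_{\mathsf{HS}}\leq M$ $\mm$-a.e., Kato's inequality yields that $|w_i|$ has weak gradient bounded by $M$, and the Sobolev-to-Lipschitz property of $\RCD$ spaces then gives that each $|w_i|$ is globally $M$-Lipschitz and in particular locally bounded. From the Leibniz rule $\d\la w_i,w_l\ra(z)=\la\nabla_z w_i,w_l\ra+\la w_i,\nabla_z w_l\ra$ one deduces that each $\la w_i,w_l\ra$ is Sobolev with locally bounded weak gradient, hence admits a locally Lipschitz representative. Being a polynomial in such functions, $\phi$ is locally Lipschitz and in particular continuous.

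The quantitative estimates on $\phi$ follow from Lemma \ref{lem:coordinates_Linfty}: the diagonal dominance built into the definition of good base forces the smallest eigenvalue of the matrix $(\la w_i,w_l\ra)_{i,l=1}^{k^*}$ on $A_{k^*}$ to be at least $(M^2k^*)^{-1}$, so $\phi\geq c:=(M^2k^*)^{-k^*}>0$ $\mm$-a.e.\ on $A_{k^*}$. On $A_j$ with $j<k^*$, the tangent module has dimension $j$, so the $k^*$ vectors $w_1,\ldots,w_{k^*}$ span a submodule of rank $\leq j<k^*$ and the Gram matrix is singular, giving $\phi=0$ $\mm$-a.e.\ on $A_j$. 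Conversely, on the open set $\{\phi>0\}$ the Gram matrix is pointwise invertible, and a short inner-product argument shows that $w_1,\ldots,w_{k^*}$ are module-independent there; combined with the maximality of $k^*$ this forces $\{\phi>0\}\subseteq A_{k^*}$ modulo $\mm$-null sets. Together with $A_{k^*}\subseteq\{\phi\geq c\}$ modulo null, this yields that $\{0<\phi<c\}$ has zero $\mm$-measure.

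The last step, which I expect to be the most delicate one conceptually, converts this into a topological conclusion. Since $\{0<\phi<c\}$ is an open set of zero $\mm$-measure and $\supp(\mm)=\X$, it must be empty: every point of $\X$ satisfies either $\phi=0$ or $\phi\geq c$. Thus $\X=\{\phi=0\}\sqcup\{\phi\geq c\}$ is a partition into two disjoint closed sets, both nonempty since each carries positive $\mm$-measure ($\mm(A_j)$ and $\mm(A_{k^*})$ respectively). Being a geodesic, and hence connected, metric space, $\X$ cannot admit such a partition, and this contradiction completes the argument. The main potential hurdle is ensuring that Kato's inequality and the Sobolev-to-Lipschitz implication are available in the precise form needed for Sobolev vector fields on $\RCD$ spaces, but both are standard in this setting.
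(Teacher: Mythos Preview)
Your proof is correct, and the core engine---obtaining continuity of the scalar products $\la w_i,w_l\ra$ via the Sobolev-to-Lipschitz property, then exploiting that continuity against the dimensional decomposition---is exactly the paper's strategy. There are, however, two genuine differences in execution worth noting.

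First, the paper does not work with the Gram determinant. Instead it modifies the base by setting $\tilde w_i:=f(|w_i|^2)w_i$ with $f\in C^\infty_c(\R)$ equal to the identity near the relevant range, so that the $\tilde w_i$ are \emph{globally} bounded with bounded covariant derivative. This makes each $g_{ij}:=\la\tilde w_i,\tilde w_j\ra$ a bona fide element of $W^{1,2}(\X)$ with bounded gradient, and the global Sobolev-to-Lipschitz property applies directly. Your route via Kato's inequality for $|w_i|$ and a \emph{local} Sobolev-to-Lipschitz argument is also valid, but the cutoff trick is precisely what sidesteps the integrability issues you flag at the end.

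Second, your argument is more explicit about the topological step. The paper simply asserts that ``it is enough to show that on a neighbourhood of $E_k$ the tangent module has dimension $\geq k$'' and then produces an open set containing $E_k$ on which the good-base inequalities persist by continuity. Your packaging via the single continuous function $\phi$, the observation that $\{0<\phi<c\}$ is open of measure zero and hence empty, and the resulting clopen decomposition of $\X$ makes the role of connectedness completely transparent. In this sense your version is the cleaner one for the concluding step, while the paper's version is cleaner for the regularity step.
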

\begin{proof}
Let $k\in\N$ be the maximal index such that $\mm(E_k)>0$ (its existence follows from the finiteness results in \cite{Han14} and \cite{GP16}). To conclude it is enough to show that on a neighbourhood of $E_k$ the tangent module has dimension $\geq k$. Let $(w_i)_{i=1}^k$ be a good base, $f\in C^\infty_c(\R)$  be such that $f(z)=z$ for every $z\in[0,M]$ and consider the vector fields $\tilde w_i:=f(|w_i|^2)w_i$. Notice that by \cite{Gigli14} we know that $f(|w_i|^2)\in W^{1,2}(\X)$ with $\nabla f(|w_i|^2)=2f'(|w_i|^2)\nabla w_i(\cdot,w_i)$, hence by \eqref{eq:bdd_basis} and the choice of $f$ we see that $f(|w_i|^2)$ is bounded with bounded gradient. It follows (see \cite{Gigli14}) that $\tilde w_i\in H^{1,2}_C(T\X)$ with
\[
\nabla\tilde w_i=\nabla f(|w_i|^2)\otimes w_i+f(|w_i|^2)\nabla w_i,
\]so that from the expression of $\nabla f(|w_i|^2)$ we see that $\tilde w_i$ is bounded with bounded covariant derivative. Hence   $g_{i,j}:=\la\tilde w_i,\tilde w_j\ra$ belongs to $W^{1,2}(\X)$ (see \cite{Gigli14}) and is bounded with bounded gradient as well. By the Sobolev-to-Lipschitz property (see \cite{AmbrosioGigliSavare11-2}, \cite{Gigli13}, \cite{Gigli13over}) we deduce that $g_{i,j}$ has a Lipschitz, in particular continuous, representative. By construction, the bounds \eqref{eq:quasi_orth_Sobolev_basis} hold on $E_k$ for the $\tilde w_i$'s, hence the continuity of $g_{i,j}$ grants that they hold also on  some neighbourhood of $E_k$, and by Lemma \ref{lem:coordinates_Linfty} above this is sufficient to conclude that the vector fields $\tilde w_i$ are independent - by the first in \eqref{eq:almort} -  on such neighbourhood, thus concluding the proof.
\end{proof}

We now prove  existence of the parallel transport for the class of those
\textsf{RCD} spaces that admit a good base for their tangent module. In the proof we shall use, for simplicity, the fact just proved that the dimension must be constant, but in fact the same argument works even without knowing a priori this fact (albeit since constant dimension follows so directly from the existence of a good base, this remark is perhaps irrelevant).
\begin{theorem}[Existence of the parallel transport]\label{thm:existence_PT} Let $(\X,\sfd,\mm)$ be any $\mathsf{RCD}(K,N)$ space, $K\in\R$ and $N\in(1,\infty)$, that admits a good base. Let $\ppi$ be a Lipschitz test plan on $\rm X$ and  fix $\overline{V}\in\e_0^*L^2(T{\rm X})$. 

Then there exists the parallel transport
$V\in\mathscr{H}^{1,2}(\ppi)$ along $\ppi$ such that $V_0=\overline{V}$.
\end{theorem}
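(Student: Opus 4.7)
\emph{Strategy.} My plan is to mimic the classical construction of parallel transport via a moving frame: I would fix a good basis $\{w_1,\dots,w_n\}$ of $L^2(T\X)$, transport it along the plan by pullback (this gives an $\mathscr H^{1,2}(\ppi)$-regular frame by Proposition \ref{prop:exh}(i)), write the unknown $V_t=\sum_i a_i(t)W_i(t)$ with $W_i(t):=\e_t^*w_i$, and reduce $\mathrm D_\sppi V=0$ to a linear ODE for $a=(a_1,\dots,a_n)$ in the reflexive Banach space $\B:=(L^2(\ppi))^n$, then solve it via Corollary \ref{cor:linear_ODE_diff} and reassemble $V$ in $\mathscr H^{1,2}(\ppi)$ via Proposition \ref{prop:exh}(ii). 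Note that by Proposition \ref{prop:constdim} the tangent module has constant dimension $n$, so the good basis has full length everywhere; moreover Proposition \ref{prop:exh}(i) together with $(ii)$ of Definition \ref{def:good_basis} yields the uniform bounds $|W_i(t)|\leq M$ and $|\mathrm D_\sppi W_i|=|\mathrm{Cov}_t(w_i)|\leq M\sfL(\ppi)$ $(\ppi\times\mathcal L_1)$-a.e., which are needed to apply Proposition \ref{prop:exh}(ii) later.

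\emph{Setting up the ODE.} I would introduce the Gram matrix $G_{ij}(t,\gamma):=\langle w_i,w_j\rangle\circ\e_t(\gamma)$, whose pointwise inverse $G^{ij}(t,\gamma)$ is well defined with $L^\infty$ entries thanks to the first inequality in \eqref{eq:almort} applied to a generic combination $\sum_i h_i W_i(t)$ (the estimate \eqref{eq:quasi_orth_Sobolev_basis} propagates unchanged from $\X$ to $\e_t^*L^2(T\X)$ because the pullback preserves pointwise norms and inner products). Expanding the covariant derivatives in the frame,
\[
\mathrm{Cov}_t(w_i)\;=\;\sum_j c_{ij}(t,\cdot)\,W_j(t),\qquad c_{ij}:=\sum_k G^{jk}\,\langle\mathrm{Cov}_t(w_i),W_k(t)\rangle,
\]
one obtains coefficients $c_{ij}\in L^\infty(\ppi\times\mathcal L_1)$ (uniformly bounded by the good-basis constants). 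By Proposition \ref{prop:exh}(ii), any $a\in W^{1,2}([0,1],\B)$ with $\sum_i a_i W_i$ designed to kill $\mathrm D_\sppi V$ must satisfy, by linear independence of the $W_j(t)$'s on each time-slice (again Lemma \ref{lem:coordinates_Linfty}), the system
\[
a_j'(t)\;=\;-\sum_{i=1}^n c_{ij}(t,\cdot)\,a_i(t),\qquad j=1,\dots,n.
\]
This is precisely $a'=\lambda(t)a$ with $\lambda(t)\in\mathrm{End}(\B)$ given by the matrix $(-c_{ij}(t,\cdot))$ acting by componentwise $L^\infty$-multiplication. The initial datum $\bar a\in\B$ is obtained by expanding $\bar V=\sum_i\bar a_iW_i(0)$ via the matrix $G^{ij}(0,\cdot)$, so that $\bar a_i=\sum_k G^{ik}(0,\cdot)\langle\bar V,W_k(0)\rangle$ lies in $L^2(\ppi)$.

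\emph{Solving the ODE and concluding.} Uniform boundedness $\|\lambda(t)\|_{\mathrm{End}(\B)}\leq n\max_{i,j}\|c_{ij}\|_{L^\infty(\sppi\times\mathcal L_1)}$ is immediate, and strong measurability of $t\mapsto\lambda(t)v$ for every $v\in\B$ follows from the Borel regularity of $\mathrm{Cov}_\sppi(w_i)\in\mathscr L^2(\ppi)$ granted by Proposition \ref{prop:basecov}, combined with the continuity in $t$ of $\langle w_i,w_j\rangle\circ\e_t$ in $L^1(\ppi)$ (Theorem \ref{thm:cont_f_circ_e_t} applied via a cutoff to bring $\langle w_i,w_j\rangle\in L^\infty$ into $L^1(\mm)$). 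Corollary \ref{cor:linear_ODE_diff} then yields a unique $a\in\mathrm{LIP}([0,1],\B)$ solving $a'=\lambda(t)a$ with $a(0)=\bar a$; each component $a_i$ belongs to $W^{1,2}([0,1],L^2(\ppi))$. Defining $V_t:=\sum_i a_i(t)W_i(t)$, Proposition \ref{prop:exh}(ii) gives $V\in\mathscr H^{1,2}(\ppi)$ together with the Leibniz formula
\[
\mathrm D_\sppi V_t\;=\;\sum_i\bigl(a_i'(t)W_i(t)+a_i(t)\,\mathrm{Cov}_t(w_i)\bigr)\;=\;\sum_j\Bigl(a_j'(t)+\sum_i c_{ij}(t,\cdot)a_i(t)\Bigr)W_j(t)\;=\;0,
\]
and $V_0=\sum_i\bar a_iW_i(0)=\bar V$ by construction.

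\emph{Main obstacle.} The real technical work lies in the justification of the measurability/integrability hypotheses needed by Corollary \ref{cor:linear_ODE_diff}, i.e.\ in showing that the coefficients $c_{ij}$ (hence $\lambda(t)$) are regular enough in $t$, and that the pointwise inverse $G^{ij}(t,\cdot)$ is produced in the right measurable class. This is bookkeeping rather than genuine geometric difficulty: the uniform lower bound on the Gram matrix coming from \eqref{eq:almort} provides a pointwise formula for $G^{-1}$ with $L^\infty$ entries, while the Borel dependence on $t$ is inherited from $\mathrm{Cov}_\sppi(w_i)\in\mathscr L^2(\ppi)$ and from Theorem \ref{thm:cont_f_circ_e_t}, first on test vector fields and then extended by density using the uniform $L^\infty$ bound on $c_{ij}$. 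Once these checks are in place, uniqueness of the solution is automatic from Corollary \ref{cor:uni}, so no further work is needed.
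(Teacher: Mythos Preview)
Your proposal is correct and follows essentially the same route as the paper: reduce to a linear ODE in $\B=(L^2(\ppi))^n$ via a pulled-back good frame and solve with Corollary~\ref{cor:linear_ODE_diff}. The only cosmetic difference is that you write the frame coefficients explicitly through the inverse Gram matrix $G^{ij}$, whereas the paper invokes Lemma~\ref{lem:coordinates_Linfty} (applied to the module $\e^*L^2(T\X)\sim\mathscr L^2(\ppi)$) to assert directly the existence of $H_{i,j}\in L^\infty(\ppi\times\mathcal L_1)$ with $\mathrm D_\sppi W_{i,t}=\sum_j H_{i,j,t}W_{j,t}$; for the strong measurability of $t\mapsto\lambda(t)v$ the paper argues via weak measurability plus separability of $\B$ and Fubini, which is somewhat cleaner than your sketch, but both are valid.
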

\begin{proof} We know by Proposition \ref{prop:constdim} that the dimension of the tangent module must be constant. From this fact and Definition \ref{def:good_basis} we know that  for some $n$ we have  $w_1,\ldots,w_n\in H^{1,2}_C(T\X)$ for which \eqref{eq:quasi_orth_Sobolev_basis} and \eqref{eq:bdd_basis} hold on  $\X$. Put $W_{i,t}:=\e_t^*w_i$. By point $(i)$ in Proposition \ref{prop:exh} we have that $W_i\in\mathscr H^{1,2}(\ppi)$ with 
\[
{\rm D}_\sppi W_{i,t}={\rm Cov}_t(w_i)
\]
and therefore from \eqref{eq:b3}, \eqref{eq:bdd_basis} and the assumption that $\ppi$ is Lipschitz we get
\begin{equation}
\label{eq:derwin}
|{\rm D}_\sppi W_{i,t}|\leq M\sfL(\ppi).
\end{equation}
Also, by the defining property of the pullback map and from \eqref{eq:quasi_orth_Sobolev_basis} we have  that for every $t\in[0,1]$ it holds
\[
\left\{\begin{array}{ll}
|W_{i,t}|\in( M^{-1},M),\\
\big|\langle W_{i,t}, W_{j,t}\rangle\big|\leq{(M^2k)^{-1}}
\end{array}\right.
\ppi\mbox{-a.e.},\quad\mbox{ for every }i,j=1,\ldots,n\mbox{ with }i\neq j,
\]
thus Lemma \ref{lem:coordinates_Linfty} grants that there are functions  $\overline{g}_1,\ldots,\overline{g}_n\in L^2(\ppi)$
such that $\overline{V}=\sum_{i=1}^n\overline{g}_i\,\e_0^*w_i$. A similar argument applied to the pullback of the map $\e$ (recall Proposition \ref{prop:l2pb} and the definition \eqref{eq:mult_L2(pi)}) and based on the bound \eqref{eq:derwin} shows that there are functions $H_{i,j}\in L^\infty(\ppi\times\mathcal L_1)$ such that
\begin{equation}
\label{eq:dercomp}
{\rm D}_\sppi W_{i,t}=\sum_jH_{i,j,t}W_{j,t}\qquad a.e.\ t. 
\end{equation}
It will be technically convenient to fix once and for all Borel representatives, still denoted by $H_{i,j}$, of these functions such that
\begin{equation}
\label{eq:normsup}
\sup_{\gamma,t}|H_{i,j,t}(\gamma)|=\|H_{i,j}\|_{L^\infty(\sppi\times\mathcal L_1)}\qquad\forall i=1,\ldots,n.
\end{equation}
We shall look for a parallel transport of the form $V:=\sum_ig_iW_i$ with $g_i\in AC^2([0,1],L^2(\ppi))$. Notice that Lemma \ref{lem:coordinates_Linfty} grants that any such $V$ belongs to $\mathscr H^{1,2}(\ppi)$ with 
\[
\begin{split}{\rm D}_\sppi V_t
&\stackrel{\eqref{eq:leiba}}=
\sum_{i=1}^n g'_{i,t}\,W_{i,t}+\sum_{i=1}^n g_{i,t}\,{\rm D}_\sppi W_{i,t}
\stackrel{\eqref{eq:dercomp}}=\sum_{i=1}^n g'_{i,t}\,W_{i,t}+\sum_{i,j=1}^n g_{i,t}\,H_{i,j,t}\,W_{j,t}\\
&\stackrel{\phantom{\eqref{eq:leiba}}}=
\sum_{i=1}^n\bigg(g'_{i,t}+\sum_{j=1}^n H_{j,i,t}\,g_{j,t}\bigg)W_{i,t}
\qquad\mbox{ for }\mathcal{L}^1\mbox{-a.e.\ }t\in[0,1].
\end{split}
\]
Hence our $V$ is the desired parallel transport if and only if the functions $g_1,\ldots,g_n$ solve the system
\begin{equation}
\label{eq:perpt}
\left\{\begin{array}{rll}
g_{i,0}\!\!\!&=\overline g_i&\qquad\forall i=1,\ldots,n,\\
g_{i,t}'+\sum_{j=1}^n H_{j,i,t}\,g_{j,t}\!\!\!&=0&\qquad\forall i=1,\ldots,n
\mbox{ and for }\mathcal{L}^1\mbox{-a.e.\ }t\in[0,1].
\end{array}\right.
\end{equation}
To solve this system we shall apply Theorem \ref{cor:linear_ODE_diff} to the Banach (in fact Hilbert) space $\B:=[L^2(\ppi)]^n$ equipped with the norm
\[
\|f\|_\B^2:=\sum_{i=1}^n\int |f_i|^2\,\d\ppi\qquad\forall f=(f_1,\ldots,f_n)\in\B.
\]
For every $t\in[0,1]$ define $\lambda_t\in{\rm End}(\B)$ as
\[
(\lambda_tf)_i:=-\sum_{j=1}^n H_{j,i,t}f_j\qquad\qquad\forall i=1,\ldots,n\ and\ f=(f_1,\ldots,f_n)\in\B,
\]
so that the system \eqref{eq:perpt} can be rewritten as
\[
\left\{\begin{array}{ll}
g_{0}\!\!\!\!&=\overline g,\\
g'_t\!\!\!\!&=\lambda_tg_t\qquad\text{for }a.e.\  t\in[0,1],
\end{array}\right.
\]
where $\overline g:=(\overline g_1,\ldots,\overline g_n)$. Theorem \ref{cor:linear_ODE_diff} grants that a solution in ${\rm LIP}([0,1],\B)\subset AC^2([0,1],\B)$ exists provided the $\lambda_t$'s are equibounded and $t\mapsto\lambda_tf$ is strongly measurable for every $f\in\B$. The former follows from
\[\begin{split}
{\big\|\lambda_tf\big\|}_\B^2&=
\sum_{i=1}^n\bigg\|\sum_{j=1}^n H_{j,i,t}f_j\bigg\|_{L^2(\sppi)}^2
\leq n\sum_{i,j=1}^n{\big\|H_{j,i,t}f_j\big\|}_{L^2(\sppi)}^2\\
&\leq n\max_{i,j,t}{\big\|H_{j,i,t}\big\|}_{L^\infty(\sppi)}^2\sum_{i,j=1}^n{\|f_j\|}_{L^2(\sppi)}^2
\stackrel{\eqref{eq:normsup}}\leq n^2\max_{i,j}{\|H_{i,j}\|}_{L^\infty(\sppi\times\mathcal{L}_1)}^2\,{\|f\|}_\B^2.
\end{split}\]
For the latter, notice that since $\B$ is separable it is sufficient to prove that for any $f\in\B$ the map $t\mapsto \lambda_tf\in\B$ is weakly measurable. Since $\B$ is also Hilbert we need to show that for every $f,g\in\B$ the map $t\mapsto\la\lambda_tf,g\ra_\B\in\R$ is measurable. Since we have
\[
\la\lambda_tf,g\ra_\B=-\sum_{i,j=1}^n\int H_{j,i,t}f_jg_i\,\d\ppi,
\] 
the conclusion follows from Fubini's theorem.
\end{proof}
\appendix
\section{Sobolev base of the tangent module}\label{app:Sob_basis}
In this appendix we show that one can always build a base of the tangent module of a $\RCD$ space which has at least Sobolev regularity, as opposed to just $L^2$ regularity. The basic idea used in the construction is based on the observation that `being a base' is a non-linear requirement. Technically speaking, the crucial argument is contained in the following lemma:
\begin{lemma}\label{le:approx_Sobolev}
Let $(\X,\sfd,\mm)$ be a $\RCD(K,\infty)$ space and $w\in L^2(T\X)$.

Then there exists $v\in H^{1,2}_C(T\X)$ such that $\la v,w\ra\neq 0$ holds $\mm$-a.e.\ on $\{|w|\neq 0\}$.
\end{lemma}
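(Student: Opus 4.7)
The plan is to construct $v$ as an absolutely convergent series $v = \sum_{n=1}^\infty c_n v_n$ in $H^{1,2}_C(T\X)$, where $\{v_n\}_{n\in\N} \subseteq \testv(\X) \subseteq H^{1,2}_C(T\X)$ is a countable $L^2(T\X)$-dense family and the positive coefficients $c_n$ are chosen recursively. After rescaling each $v_n$ so that $\|v_n\|_{H^{1,2}_C(T\X)} + \|v_n\|_{L^\infty(T\X)} \le 1$ (which leaves $F_n := \{\langle v_n, w\rangle \neq 0\}$ unchanged), the requirement $\sum_n c_n \le 1$ will give convergence in $H^{1,2}_C(T\X)$. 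A preliminary claim is that the $F_n$'s cover $\{|w|\neq 0\}$ up to $\mm$-null: otherwise there would be a positive-measure $E \subseteq \{|w|\neq 0\}$ with $\langle v_n, w\rangle = 0$ on $E$ for every $n$, and density together with continuity of the pairing would give $\int_E \langle u, w\rangle \,\d\mm = 0$ for every $u \in L^2(T\X)$, contradicting the choice $u = \nchi_E w \in L^2(T\X)$.

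Write $u_n := \sum_{k \le n} c_k v_k$, $h_n := \langle u_n, w\rangle$, and truncate via $K_n := \{1/n \le |w| \le n\}$, which has finite $\mm$-measure (since $w \in L^2(T\X)$) and exhausts $\{|w| \neq 0\}$ up to null. The recursion chooses $c_n \in (0, 2^{-n}]$ and a threshold $\delta_n > 0$ defining $G_n := K_n \cap \bigcup_{k \le n} F_k \cap \{|h_n| \ge \delta_n\}$, so that: (i) $h_n \neq 0$ holds $\mm$-a.e.\ on $\bigcup_{k \le n} F_k$, ensuring $\mm\bigl((K_n \cap \bigcup_{k \le n} F_k) \setminus G_n\bigr) \le 2^{-n}$ for $\delta_n$ small; (ii) every $c_k$ with $k > n$ is constrained by $c_k \le \delta_n/(2^{k-n+1}n)$. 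For (i), note that on $F_n \setminus \bigcup_{k < n} F_k$ one automatically has $h_n = c_n \langle v_n, w\rangle \neq 0$, while on $F_n \cap \bigcup_{k < n} F_k$ the equation $h_{n-1}(x) + c\,\langle v_n, w\rangle(x) = 0$ has, for $\mm$-a.e.\ $x$, a unique solution $c = \varphi_n(x)$; the measurable function $\varphi_n$ on the $\sigma$-finite set $F_n$ has at most countably many atomic level values, so $c_n \in (0, 2^{-n}]$ is picked outside this countable bad set. The compounded constraints (ii) together with $c_k \le 2^{-k}$ leave a positive open interval for each $c_k$.

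In the limit $u_n \to v$ in $H^{1,2}_C(T\X)$, and on every $G_n$ the bound $\sum_{k > n} c_k \le \delta_n/(2n)$ together with $|c_k \langle v_k, w\rangle| \le c_k |w| \le c_k n$ on $K_n \supseteq G_n$ yields
\[
|\langle v, w\rangle| \ge |h_n| - \delta_n/2 \ge \delta_n/2 > 0 \quad \text{on } G_n.
\]
Hence $\langle v, w\rangle \neq 0$ on $\bigcup_n G_n$, and Borel--Cantelli applied to the summable measures $\mm\bigl((K_n \cap \bigcup_{k \le n} F_k) \setminus G_n\bigr) \le 2^{-n}$ shows that $\mm$-a.e.\ $x \in \{|w|\neq 0\} = \bigcup_n F_n$ (mod null) belongs to $K_n \cap \bigcup_{k \le n} F_k$ and to $G_n$ for all sufficiently large $n$, completing the proof. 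The main technical obstacle is that the series tail could in principle destroy the nonvanishing of the partial sums $h_n$ established at any finite stage; the quantitative bookkeeping via $\delta_n$ and the forward-looking constraint (ii) precisely overcome this by forcing tail contributions to stay below $\delta_n/2$ on the quantitative good region $G_n$, while Borel--Cantelli ensures the $G_n$'s exhaust $\{|w|\neq 0\}$.
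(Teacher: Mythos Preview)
Your proof is correct and follows essentially the same strategy as the paper's: build $v$ as an absolutely convergent series of test vector fields with coefficients chosen recursively (avoiding a countable set of bad values at each stage) and with forward-looking quantitative constraints so that tail contributions cannot undo the nonvanishing secured at finite stages. The only differences are cosmetic---the paper uses a sequence $(w_n)\to w$ in $L^2(T\X)$ rather than a generic dense family, and passes to an equivalent probability measure $\tilde\mm$ rather than truncating via the sets $K_n$.
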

\begin{proof} We can assume $w\neq 0$ or otherwise there is nothing to prove; then replacing if necessary $w$ with $(\nchi_{\{|w|\leq 1\}}+\nchi_{\{|w|>1\}}|w|^{-1})w$ we can also assume that $|w|\leq 1$ $\mm$-a.e.. Let $(w_n)\subset{\rm TestV}(\X)$ be $L^2(T\X)$-converging to $w$ and $\tilde\mm$  a Borel probability measure on $\X$ such that $\mm\ll\tilde\mm\leq\mm$. Then $\la w_n,w\ra\to|w|^2$ in $L^2(\X,\tilde\mm)$ and thus
\begin{equation}
\label{eq:mn}
m_n:=\tilde\mm\big(\{|\la w_n,w\ra|>0\}\big)\qquad\to \qquad m_\infty:=\tilde \mm\big(\{|w|>0\}\big).
\end{equation}
We now observe that
\begin{equation}
\label{eq:zero}
\begin{split}
\text{for every $v,\tilde w\in L^2(T\X)$ and $a>0$ there is $b\in(0,a)$}&\text{ such that}\\
\tilde\mm\Big(\big\{|\la \tilde w,w\ra|>0\big\}\cap \big\{\la v+b\tilde w,w\ra=0\big\}\Big)=0.
\end{split}
\end{equation}
Indeed, putting for brevity $E_b:=\big\{|\la\tilde w,w\ra|>0\big\}\cap \big\{\la v+b\tilde w,w\ra=0\big\}$ we have
\[
\tilde\mm\big(E_b\cap E_{b'}\big)\leq \tilde\mm\Big(\big\{|\la\tilde w,w\ra|>0\big\}\cap \big\{(b-b')\la \tilde w,w\ra=0\big\}\Big)=0\qquad\forall b\neq b',
\]
so that the claim follows from the finiteness of $\tilde\mm$ and the fact that the interval $(0,a)$ is uncountable.

Now put $\alpha_n:=\||w_n|\|_{L^\infty(\X)}+\|w_n\|_{W^{1,2}_C(T\X)}$ and recursively define  decreasing sequences $(\beta_n),(\gamma_n)\subset(0,\infty)$  such that $\beta_1=1$ and for every $n\in\N$ we have
\[
3\beta_{n+1}\leq\gamma_{n+1}\leq {\beta_n} \quad \text{and for}\quad E_{n}:=\Big\{\big|\la\sum_{i=1 }^n\frac{\beta_i}{\alpha_i}w_i,w\ra\big| \geq \gamma_{n+1}\Big\}\quad\text{it holds}\quad \tilde\mm(E_{n})\geq \frac{m_n}{1+\frac1n}.
\]
To see that this is possible, let $\beta_1=1$, and notice that trivially $\big\{|\la\frac{\beta_1}{\alpha_1}w_1,w\ra|>0\big\}=\big\{|\la w_1,w\ra|>0\big\}$ so that for $\gamma_{2}\in(0,\beta_1)$ sufficiently small the above holds. Now assume that $\beta_{n-1}$ and $\gamma_{n}$ have been found, use \eqref{eq:zero} for $v:=\sum_{i=1 }^{n-1}\frac{\beta_i}{\alpha_i}w_i$, $\tilde w:=w_{n}$ and $a:=\gamma_{n}/3$ to find $\beta_{n}:=b<\gamma_{n}/3$ such that $\tilde\mm\big(\big\{|\la\sum_{i=1 }^{n}\frac{\beta_i}{\alpha_i}w_i,w\ra|>0\big\}\big)\geq\tilde\mm\big(\big\{|\la w_n,w\ra|>0\big\}\big)=m_n$. Hence for $\gamma_{n+1}\in(0,\beta_{n})$ sufficiently small the claim holds.

We claim that the vector $v:=\sum_{i\geq 1}\frac{\beta_i}{\alpha_i}w_i$ satisfies the conclusion of the statement and start observing that $\beta_i\leq 3^{-i}$ and thus $\|\frac{\beta_i}{\alpha_i}w_i\|_{W^{1,2}_C(T\X)}\leq3^{-i}\|{\alpha_i}^{-1}w_i\|_{W^{1,2}_C(T\X)}\leq 3^{-i}$ by definition of $\alpha_i$. Hence the series converges in $W^{1,2}_C(T\X)$, so that $v$ is well defined and belongs to $H^{1,2}_C(T\X)$. Now notice that by construction and \eqref{eq:mn} we have $\tilde\mm(E_n)\to m_\infty$ and $\tilde\mm\big(E_n\setminus\{|w|>0\}\big)=0$, so that $\tilde\mm\big(\{|w|>0\}\setminus \cup_nE_n\big)=0$. Hence to conclude it is sufficient to show that for every $n\geq1$ it holds $\la v,w\ra\neq 0$ $\tilde \mm$-a.e.\ on $E_n$. Fix such $n$, let $m>n$ and observe that by definition of the $\alpha_i$'s and $\beta_i$'s we have
\[
\Big|\big\langle\frac{\beta_m}{\alpha_m}w_m,w\big\rangle\Big|\leq 3^{n-m+1}\beta_{n+1}\big|\la \alpha_m^{-1}w_m,w\ra\big|\leq 3^{n-m+1}\beta_{n+1},\qquad\tilde\mm-a.e.,
\]
so that $\big|\sum_{m>n}\la\frac{\beta_m}{\alpha_m}w_m,w\ra\big|\leq\frac32{\beta_{n+1}}\leq\frac12\gamma_{n+1}$. On the other hand by construction we have that $\big|\sum_{i=1}^n\la\frac{\beta_i}{\alpha_i}w_i,w\ra\big|\geq\gamma_{n+1}$ holds $\tilde\mm$-a.e.\ on $E_n$, granting that $|\la v,w\ra|\geq\frac12\gamma_{n+1}$ $\tilde\mm$-a.e.\ on $E_n$. 
\end{proof}

By repeatedly applying Lemma \ref{le:approx_Sobolev}, we can find a family of
$H^{1,2}_C(T{\rm X})$-Sobolev generators of the tangent module
on any $\mathsf{RCD}(K,\infty)$ space $\rm X$, as follows:
\begin{theorem}[Sobolev base of the tangent module]\label{thm:sobbase}
\label{thm:Sobolev_basis}
Let $({\rm X},\sfd,\mm)$ be an ${\sf RCD}(K,\infty)$ space, for some constant $K\in\R$.
Suppose that the dimensional decomposition of $\rm X$ is given by ${(A_n)}_{n\in\N}$.
Then there exists a sequence of vector fields ${(v_n)}_{n\geq 1}\subseteq H^{1,2}_C(T{\rm X})$ such that
\[
v_1,\ldots,v_n\mbox{ is a basis for }L^2(T{\rm X})\mbox{ on }A_n,\quad\mbox{ for every }n\in\N^+.
\]
\end{theorem}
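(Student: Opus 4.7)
The plan is to construct the $v_n$ one at a time by induction on $n$, applying Lemma \ref{le:approx_Sobolev} at each step. The inductive hypothesis after the $k$-th step reads: (a) for every $n \le k$ the vectors $v_1,\dots,v_n$ form a basis of $L^2(T\X)$ on $A_n$; (b) for every $n > k$ the vectors $v_1,\dots,v_k$ are independent on $A_n$. Property (a) is preserved automatically when adding more vectors, so the whole burden of the step is to produce a new $v_{k+1} \in H^{1,2}_C(T\X)$ which, together with $v_1,\dots,v_k$, is still independent on each $A_n$ with $n > k$; because the tangent module has dimension exactly $k+1$ on $A_{k+1}$, this independence automatically upgrades to a basis there.

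For the step itself, I would let $V_k \subseteq L^2(T\X)$ be the orthogonal complement of the submodule generated by $v_1,\dots,v_k$. By (b), $V_k$ has constant dimension $n-k \ge 1$ on every $A_n$ with $n > k$. The plan is to first exhibit some $w \in V_k$ with $|w| > 0$ $\mm$-a.e.\ on $E_k := \bigcup_{n>k} A_n$, and then apply Lemma \ref{le:approx_Sobolev} to this $w$, obtaining $v_{k+1} \in H^{1,2}_C(T\X)$ with $\la v_{k+1}, w\ra \ne 0$ $\mm$-a.e.\ on $E_k$. This $v_{k+1}$ cannot lie in the $L^0(\mm)$-span of $v_1,\dots,v_k$ on any positive-measure subset $F \subseteq E_k$: writing $v_{k+1} = \sum_i a_i v_i$ on such $F$ and using that $w \in V_k$ is orthogonal to each $v_i$, we would obtain $\la v_{k+1}, w\ra = \sum_i a_i \la v_i, w\ra = 0$ on $F$, contradicting the conclusion of the lemma. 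Hence $v_1,\dots,v_{k+1}$ remain independent on every $A_n$ with $n > k$, closing the induction.

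The main obstacle is therefore the construction of such a $w \in V_k$. My intention is to work locally: on each $A_n$ with $n > k$ the module $V_k$ has constant dimension $n-k$, so by the standard structure theory for separable Hilbert modules over $L^\infty$ (cf.\ \cite{Gigli14}) it admits there a local orthonormal base $\tilde w^{(n)}_1,\dots,\tilde w^{(n)}_{n-k}$, and its first element must satisfy $|\tilde w^{(n)}_1| > 0$ $\mm$-a.e.\ on $A_n$, for otherwise its vanishing on a positive-measure subset would drop the local dimension below $n-k$. I would then truncate and rescale using $\sigma$-finiteness of $\mm$ to produce $\hat w_n \in V_k$ supported in $A_n$, with $|\hat w_n| > 0$ $\mm$-a.e.\ on $A_n$ and $\|\hat w_n\|_{L^2(T\X)} \le 2^{-n}$, and set $w := \sum_{n>k} \hat w_n$. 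The series converges in $L^2(T\X)$, the limit belongs to $V_k$ by closedness, and the pairwise disjointness of the $A_n$ gives $|w| > 0$ $\mm$-a.e.\ on $E_k$. The base case $k = 0$ fits the same scheme with $V_0 = L^2(T\X)$ and the inductive hypothesis vacuously satisfied, so once this module-theoretic construction of $w$ is in place the induction runs uneventfully.
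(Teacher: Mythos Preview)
Your proposal is correct and follows essentially the same inductive scheme as the paper: at each step one produces a vector $w$ orthogonal to the previously constructed $v_1,\dots,v_k$ with $|w|>0$ on $\bigcup_{n>k}A_n$, applies Lemma~\ref{le:approx_Sobolev} to obtain $v_{k+1}$, and then checks independence via the pairing with $w$. The only notable difference is that you spell out the construction of $w$ in detail (local orthonormal frames of the orthogonal complement, rescaling, summing), whereas the paper simply asserts its existence; your independence check is phrased as ``$v_{k+1}$ is not in the $L^0$-span on any positive-measure set'' rather than the paper's direct verification that all coefficients vanish, but the two are equivalent.
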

\begin{proof}
The thesis can be equivalently rewritten in the following way:
\begin{equation}\label{eq:Sobolev_basis_alternative}
v_1,\ldots,v_n\mbox{ are independent on }{\bigcup}_{k\geq n}A_k,\quad\mbox{ for every }n\in\N^+.
\end{equation}
We build the sequence ${(v_n)}_n$ by means of a recursive argument.
First of all, choose a vector field $w\in L^2(T{\rm X})$ such that
$0<|w|\leq 1$ $\mm$-a.e.\ in $\bigcup_{k\geq 1}A_k$, then pick
$v_1\in H^{1,2}_C(T{\rm X})$ such that $\la v_1,w\ra\neq 0$ $\mm$-a.e.\ in $\bigcup_{k\geq 1}A_k$,
whose existence is granted by Lemma \ref{le:approx_Sobolev}. Thus in particular we have
$|v_1|>0$ $\mm$-a.e.\ in $\bigcup_{k\geq 1}A_k$, proving \eqref{eq:Sobolev_basis_alternative} for $n=1$.
Now suppose to have already found $v_1,\ldots,v_n$ satisfying the required property.
It can be easily seen that there exists $w\in L^2(T{\rm X})$ such that
$\la v_1,w\ra=\ldots=\la v_n,w\ra=0$ and $0<|w|\leq 1$
hold $\mm$-a.e.\ in the set $\bigcup_{k>n}A_k$.
Hence Lemma \ref{le:approx_Sobolev} ensures the existence of a vector field
$v_{n+1}\in H^{1,2}_C(T{\rm X})$ such that $\la v_{n+1},w\ra\neq 0$ $\mm$-a.e.\ in $\bigcup_{k>n}A_k$.

Now take any $f_1,\ldots,f_{n+1}\in L^\infty(\mm)$ such that $\sum_{i=1}^{n+1}f_i\,v_i=0$
$\mm$-a.e.\ in $\bigcup_{k>n}A_k$, thus one has $f_{n+1}\la v_{n+1},w\ra
=\sum_{i=1}^{n+1}f_i\la v_i,w\ra=0$ $\mm$-a.e.\ in $\bigcup_{k>n}A_k$, from which we can deduce that
$f_{n+1}=0$ holds $\mm$-a.e.\ in $\bigcup_{k>n}A_k$. Therefore $\sum_{i=1}^n f_i\,v_i=0$ $\mm$-a.e.\ in
$\bigcup_{k>n}A_k$ and accordingly also $f_1=\ldots=f_n=0$ $\mm$-a.e.\ in $\bigcup_{k>n}A_k$,
as a consequence of the independence of $v_1,\ldots,v_n$.
This grants that the vector fields $v_1,\ldots,v_{n+1}$ are independent on $\bigcup_{k>n}A_k$,
proving \eqref{eq:Sobolev_basis_alternative} for $n+1$. The thesis is then achieved.
\end{proof}

\end{document}